\newtheorem{theorem}{Theorem}
\newtheorem{proposition}[theorem]{Proposition}
\newtheorem{lemma}[theorem]{Lemma}
\newtheorem{corollary}[theorem]{Corollary}
\newtheorem{definition}[theorem]{Definition}
\theoremstyle{remark}
\newtheorem{remark}[theorem]{\bf {Remark}}
\newtheorem{example}[theorem]{\bf {Example}}
\numberwithin{equation}{section}
\newcommand{\Nset}{\mathbb{N}}
\newcommand{\Qset}{\mathbb{Q}}
\newcommand{\Rset}{\mathbb{R}}
\title{The Samuel Realcompactification of a metric space}
\author{M. Isabel Garrido}
\address{Instituto de Matem\'{a}tica Interdisciplinar (IMI), Departamento de Geometr\'{\i}a y  Topolog\'{\i}a, Universidad Complutense de Madrid, 28040 Madrid, Spain}
\email{maigarri@math.ucm.es}
\thanks{Partially supported by MINECO Project MTM2012-34341 (Spain).}
\author{Ana S. Mero\~{n}o}
\address{Departamento de An\'{a}lisis Matem\'{a}tico, Universidad Complutense de Madrid, 28040-Madrid, Spain.}
\email{anasoledadmerono@ucm.es}
\subjclass[2000]{Primary 54D60, 54E40; Secondary 54E35, 54C30, 46A17}
\begin{document}

\keywords{Metric spaces; real-valued uniformly continuous functions; real-valued Lipschitz functions; Samuel realcompactification; Samuel compactification; Lipschitz realcompactification; Bourbaki-boundedness; Bourbaki-completeness.}

\begin{abstract}
In this paper we introduce a realcompactification  for  any metric space $(X,d)$, defined by means of the family of all its real-valued  uniformly continuous functions. We call it the Samuel realcompactification, according to the well known Samuel compactification  associated to the family of all the  bounded real-valued uniformly continuous functions.  Among many other things, we study the corresponding problem of the Samuel realcompactness for metric spaces.  At this respect, we prove that a result of Kat\v{e}tov-Shirota type occurs in this context, where the completeness property is replaced by  Bourbaki-completeness (a notion recently introduced by the authors) and the closed discrete  subspaces are replaced  by the  uniformly discrete ones. More precisely,  we  see   that a metric space $(X,d)$ is Samuel realcompact iff it is Bourbaki-complete and every uniformly discrete subspace of $X$ has non-measurable cardinal. As a consequence, we derive that  a normed space is Samuel realcompact iff it has finite dimension. And this means in particular that realcompactness  and Samuel realcompactness can be very  far apart. The paper also contains results relating this  realcompactification with the so-called Lipschitz realcompactification (also studied here), with the classical Hewitt-Nachbin realcompactification and with the completion of the initial metric space.

\end{abstract}

\maketitle

\large

\section*{Introduction}

\smallskip

In this paper we are going to introduce a realcompactification that can be defined for any metric space as well as for any uniform space. It represents a way of extending the classical topological notion of realcompactness to the frame of uniform spaces. Moreover, it  will be  related to the well known Samuel compactification, in the same way that the Hewitt-Nachbin realcompactification of a completely regular space is related to its Stone-\v{C}ech compactification.  Some authors, like Ginsburg, Isbell, Rice, Reynolds, Hu\v{s}ek and Pulgar\'{\i}n,   considered some equivalent forms of this  uniform realcompactification when they studied the completion of certain uniformities defined on a space (see \cite{ginsburg}, \cite{isbell}, \cite{rice}, \cite{reynolds} and \cite{husek}). Nevertheless, we will be interested here in the use of just the set of the  real-valued  uniformly continuous functions, instead of considering several families of uniform covers which define different uniformities on the space. In this line, Nj{\aa}stad  studied in \cite{najstad} an extension of the notion of realcompactness for proximity spaces. We will see later that this definition   coincides with our notion, at least for metric spaces. Another special uniform extension for uniform spaces was studied by Curzer and Hager in \cite{curzer}, and more recently by Chekeev in \cite{chekeev}. We will see  that this last extension is nothing but  usual realcompactness, for metric spaces.

\smallskip

Recall that in 1948, P. Samuel (\cite{samuel}) defined the compactification, which bears his name, for any uniform space $X$  by means of some kind of ultrafilters in $X$.  We  refer to the nice article  by Woods (\cite{woods}) where several characterizations and properties of this compactification are given in the special case of metric spaces. Even if many concepts and results contained here admit easy generalizations to uniform spaces, we will work in the realm of metric spaces, mainly because in this way we will have the useful tools of some  classes of  Lipschitz functions.

\smallskip

So, let  $(X,d)$ be  a metric space and let us denote by $s_d X$ its Samuel compactification. It is known that $s_d X$ can be characterized as the smallest compactification (considering the usual order in the family of all compactifications of $X$) with the property that each bounded real-valued uniformly continuous  function  on $X$ can be continuously extended to $s_d X$ (see for instance \cite{woods}). Inspired by this fact, we are going to see that the smallest realcompactification with the property that every (not necessarily bounded) real-valued uniformly continuous function on $X$ admits a continuous extension to it, also exists. We will call it the {\it Samuel realcompactification} of $X$ and it will be denoted by $H(U_d(X))$
(the letter $H$ come from the word ``homomorphism'' as we will see in the next section).

\smallskip

On the other hand, it is known that  $s_d X$  is also the smallest compactification to which each bounded real-valued Lipschitz function on $X$ can be continuously extended (see for instance \cite{garrido1}). Therefore, we can say that both families of bounded real-valued functions give  the same (equivalent) compactification of $X$. A natural question would be whether (not necessarily bounded)  Lipschitz functions also determine the Samuel realcompactification. We will see that this is not the case,  having then another realcompactification  that we will call the  {\it Lipschitz realcompactification} of $X$, denoted by $H(Lip_d(X))$.

\smallskip

This paper is mainly devoted to study both realcompactifications for a metric space $(X,d)$, and the contents are as follows.
First we analyze  the Lispchitz realcompactification, and we characterize those metric spaces such that  $X= H(Lip_d(X))$, which we call  {\it Lipschitz realcompact}. Then, we show that Lipschitz realcompactifications are a key part of the Samuel realcompactifications since we  prove that $H(U_d(X))$ is  the supremum of all the Lipschitz realcompactifications given by all the uniformly equivalent metrics, i.e,  $$H(U_d (X))=\bigvee \big\{H(Lip_\rho(X)): \rho\stackrel{{u}}{\sim} d\big\}.$$

\smallskip

Next, we will address the problem of the Samuel realcompactness for a metric space. We say that a metric space $(X,d)$ is  {\it Samuel realcompact} whenever    $X=H(U_d(X))$. Our main result in this line will be a theorem of Kat\v{e}tov-Shirota type, since it involves some kind of completeness and some hypothesis about non-measurable cardinals. More precisely, we will prove that a metric space is Samuel realcompact iff it is Bourbaki-complete and every uniformly discrete subspace has non-measurable cardinal. Recall that the property of {\it Bourbaki-completeness} relies between compactness and completeness,  and it was recently  introduced and studied  by us in \cite{merono2}.

\smallskip

Reached this point of the paper, we will observe that most of the results  obtained up to here are very related to some families of bounded subsets of the metric space, and more precisely related  to some bornologies on $X$. Recall that a family of subsets of $X$ is said to be a {\it bornology} whenever they form a cover of $X$, closed by finite unions, and stable by subsets. Thus, we will present in a synoptic table the coincidence of some extensions of the metric space with the equality between some special metric bornologies.  Concerning to  realcompactifications given by bornologies, we refer to the paper by Vroegrijk  \cite{vroegrijk}, where the so-called {\it bornological realcompactifications} for general topological spaces are studied.

\smallskip

Finally, we will devote last section to compare the Samuel realcompactification of a metric space $(X,d)$ with  the classical Hewitt-Nachbin realcompactification $\upsilon X$. For that reason, results contained here will have a more topological flavor. We will check that $\upsilon X$ not only lives in $\beta X$ (the Stone-\v{C}ech compactification) but also in $s_d X$. And, we will see that, for any  topological metrizable space $(X,\tau)$, $$\upsilon X =\bigvee \big\{H(U_d(X)): d\text{ metric with \,}  \tau_d=\tau\}=\bigvee \big\{H(Lip_d(X)): d\text{ metric with \,}  \tau_d=\tau\}.$$

\smallskip

\section{Preliminaries on realcompactifications}

\smallskip

Most of the results contained in this section can be seen, for instance, in \cite{garrido1}.
For a Tychonoff topological space $X$ and for a family $\mathcal L$ of real-valued continuous functions, that we suppose having the algebraic structure of unital vector lattice, we denote by $H(\mathcal L)$ the set of all the real unital vector lattice homomorphisms on $\mathcal L$. We consider on $H(\mathcal L)$ the topology inherited as a subspace of the product space ${\mathbb R}^{\mathcal L}$, where the real line $\mathbb R$ is endowed with the usual topology. It is easy to check that $H(\mathcal L)$ is closed in ${\mathbb R}^{\mathcal L}$, and then it is a realcompact space. In the same way, we can consider $\mathcal L^*$ the unital vector sublattice formed by the bounded functions in $\mathcal L$. Now the space $H(\mathcal L^*)$ is in fact compact, and it is easy to see that  $H(\mathcal L)$ can be considered as a topological subspace of $H(\mathcal L^*)$. Hence, we can write $H(\mathcal L)\subset H(\mathcal L^*).$

\smallskip

Moreover, when the family $\mathcal L$ separates points a closed sets of $X$, i.e., when for every  closed subset $F$ of $X$ and $x\in X\setminus F$ there exists some $f\in \mathcal L$ such that $f(x)\notin \overline{f(F)}$, then we can embed the topological space $X$ (in a densely way) in  $H(\mathcal L)$ and also in $H(\mathcal L^*)$. And this means, in particular, that $H(\mathcal L)$  is a realcompactification of $X$ and $H(\mathcal L^*)$ is a compactification of $X$. And then, we have $$X\subset H(\mathcal L)\subset H(\mathcal L^*).$$

On the other hand,   every function in $\mathcal L$ (respectively in $\mathcal L^*$) admits a unique  continuous  extension to $H(\mathcal L)$ (resp. to $H(\mathcal L^*)$). In fact, $H(\mathcal L)$ (resp. $H(\mathcal L^*)$) is characterized  (up to equivalence) as the smallest realcompactification (resp. compactification) of $X$ with this property. Note that we are here considering the usual order in the set of all the realcompactifications and compactifications on $X$. Namely, we say that $\alpha_1 X\leq \alpha_2 X$ whenever there is a continuous mapping $h:\alpha_2X\to \alpha_1X$ leaving $X$ pointwise fixed. And we say that $\alpha_1 X$ and $\alpha_2X$  are equivalent whenever $\alpha_1 X\leq \alpha_2 X$ and $\alpha_2 X\leq \alpha_1 X$, and this implies the existence of a homeomorphism between $\alpha_1 X$ and $\alpha_2 X$ leaving $X$ pointwise fixed.

\smallskip

A very useful property in connection with the extension of continuous functions is the following: ``each $f\in \mathcal L$ can be extended to a unique continuous function $f^*: H(\mathcal L^*)\to {\mathbb R} \cup \{\infty\}$, where ${\mathbb R} \cup \{\infty\}$ denotes the one point compactification of the real line'' (see \cite{garrido1}). In particular, this allows us to describe the space $H(\mathcal L)$ as follows, $$H(\mathcal L)=\big\{\xi\in H(\mathcal L^*): \, f^*(\xi)\neq \infty \text{\,\, for all \,} f\in \mathcal L\big\}.$$

Note that if we consider $\mathcal L =C(X)$, the set of all the real-valued continuous functions on  $X$, then  $H(C(X))=\upsilon X$ is the Hewit-Nachbin realcompactification of $X$ and $H(C^*(X))=\beta X$ is now its  Stone-\v{C}ech compactification.

\smallskip

When $(X,d)$ is a metric space, two important unital vector lattices of real-valued functions can be also considered. Namely,  the set $Lip_d(X)$ of all the real-valued  Lipschitz functions, and   the set $U_d(X)$ of all the real-valued uniformly continuous functions defined on $X$. At this point we can say that $H(U_d^*(X))$ is in fact  the   {\it Samuel compactification} $s_d X$ of $X$ since, as we said in the Introduction, this compactification is characterized as the smallest compactification where all the real and bounded uniformly continuous functions on $X$ can be continuously extended (\cite{woods}). Now, according to the fact that $\mathcal L$ and its uniform closure $\mathcal{\overline L}$ define equivalent realcompactifications (see \cite{garrido1}), together with the well known result  from which every bounded and uniformly continuous functions can be uniformly approximated by Lipschitz functions (see for instance \cite{garrido2}), we can derive that $$H(Lip_d^*(X))=  H(U_d^*(X)) = s_d X.$$

Then, we wonder what happen   when we consider unbounded functions. First of all, note that a similar uniform approximation  result does not exist in the unbounded case.  In fact, we know that for a metric space $(X,d)$ the family $Lip_d(X)$ is uniformly dense in $U_d(X)$ if and only if $X$ is {\it small-determined}. Recall that the class of the small-determined spaces were introduced by Garrido and Jaramillo in \cite{garrido2}, where it is proved  that, eventhough they are not all the metric spaces,  they form a big class  containing the normed spaces, the length spaces, or more generally  the so-called quasi-convex metric spaces. Hence, in the general frame, we have that  $H(U_d(X))$ and $H(Lip_d(X))$, that we will call  respectively the {\it Samuel realcompactification} and the {\it Lipschitz realcompactification} of the metric space $(X,d)$, could be different realcompactifications. In fact that will be the case for infinite bounded discrete metric spaces.

\smallskip

\section{The Lipschitz Realcompactification}

\smallskip

According to the above section, we already know several properties of $H(Lip_d(X))$. Namely, we can say that it is  the smallest realcompactification of the metric space $(X,d)$ where every function $f\in Lip_{d}(X)$ can be continuously  extended, it is contained in $H(Lip^*_d(X))=s_d X$, the Samuel compactification of $X$, and also that it can be described as, $$H(Lip_d(X))=\big\{\xi\in s_d X: \, f^*(\xi)\neq \infty \text{\,\, for all \,} f\in Lip_d(X)\big\}.$$

Our next result gives another characterization of $H(Lip_d(X))$ by using fewer Lipschitz functions. Namely, we will consider the family of functions $\{g_A: \emptyset\neq A\subset X\}$, where  $g_A:X\to \mathbb R$ is defined by $g_A(x)=d(x,A)=\inf\{d(x,a):a\in A\}$,  $x\in X$. Compare this  result with the analogous one obtained by Woods in \cite{woods} for the Samuel compactification.

\begin{proposition} Let $(X,d)$ be a metric space. Then, $H(Lip_d(X))$ is the smallest realcompactification of $X$ where, for every  $\emptyset\neq A\subset X$, the function $g_A$  can be continuously extended.

\begin{proof} Firstly, since every function $g_A$ is Lipschitz, then clearly it can be continuously extended to $H(Lip_d(X))$. On the other hand, if $Y$ is another realcompactification of $X$ with the above mentioned property,  we are going to see that $Y\geq H(Lip_d(X))$. Indeed, it is enough to check that every $f\in Lip_d(X)$ can also be continuously extended to $Y$. For that, we will use the extension result by Blair contained in  \cite{blair} saying that a real-valued continuous function $f$ defined on the dense subspace $X$ of $Y$ admits continuous extension to $Y$, if and only if, the two next conditions are fulfilled, where $\text{cl}_Y$ denotes the closure in the space $Y$,

\begin{enumerate}

\item If $a<b$, then $\text{cl}_{Y}\{x:\, f(x)\leq a\} \cap \text{cl}_{Y}\{x:\, f(x)\geq b\} = \emptyset$.

\item $\bigcap_{\,n\in \Nset}\text {cl}_{Y}\{x:\, |f(x)|\geq n\} = \emptyset$.

\end{enumerate}

\smallskip

So, let $f\in Lip_{d}(X)$, fix $x_0\in X$ and write,  for every $x\in X$,
$$f(x)=f(x_0) + \sup\{0, f(x)-f(x_0)\}-\sup\{0, f(x_0)-f(x)\}.$$
\noindent   We are going to apply the Blair result to the functions $h_1(x)=\sup\{0, f(x)-f(x_0)\}$ and $h_2(x)= \sup\{0, f(x_0)-f(x)\}$ in order to see that they, and hence $f$, can be continuously  extended to $Y$.

Since, clearly, $h_1$ is a Lipschitz function, there exists some constant $K\geq 0$, such that  $$0\leq h_1(x)\leq K\cdot d(x,x_0)=K\cdot g_{\{x_0\}}(x).$$
In particular, it follows that,  $$\bigcap_{\,n\in \Nset}\text{cl}_{Y}\big\{x:\, |h_1(x)|\geq n\big\} \subset \bigcap_{\,n\in \Nset}\text {cl}_{Y}\big\{x:\, |K\cdot g_{\{x_0\}} (x)|\geq n\big\}$$
\noindent and  since $K\cdot g_{\{x_0\}}$ satisfies above condition (2), then  $h_1$ also do.

On the other hand, let $a <b\in \Rset$ such that $\emptyset\neq A=\{x: h_1(x)\leq a\}$, and take the corresponding function $g_A$. Now, it is easy to check that $\{x:\, h_1(x)\leq a\}\subset \{x:\, g_A(x)\leq 0\}$ and $\{x:\, h_1(x)\geq b\}\subset \{x:\, g_A(x)\geq \frac{b-a}{K}\}$. Then, since  condition (1) is true for  $g_A$ then it is also true for $h_1$. Similarly, we can prove  that the function $h_2$ admits extension to $Y$, and this finishes the proof.
\end{proof}

\end{proposition}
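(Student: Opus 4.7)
The forward direction is immediate: each $g_A$ is $1$-Lipschitz by the triangle inequality $|d(x,A)-d(y,A)| \leq d(x,y)$, so it belongs to $Lip_d(X)$ and hence extends continuously to $H(Lip_d(X))$ by the general framework recalled in Section~1.

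For the reverse direction, I suppose $Y$ is a realcompactification of $X$ to which every $g_A$ extends continuously, and I aim to show $Y \geq H(Lip_d(X))$. Since $H(Lip_d(X))$ is characterized as the smallest realcompactification to which every $f \in Lip_d(X)$ extends, it suffices to extend an arbitrary $f \in Lip_d(X)$ to $Y$. The plan is to fix $x_0 \in X$ and split $f = f(x_0) + h_1 - h_2$ with $h_1(x) = \max\{0, f(x)-f(x_0)\}$ and $h_2(x) = \max\{0, f(x_0)-f(x)\}$; both $h_i$ are then nonnegative, Lipschitz with the same constant $K$ as $f$, and dominated by $K \cdot g_{\{x_0\}}$. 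So the job reduces to extending $h_1$ (the argument for $h_2$ being symmetric) to $Y$.

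To extend $h_1$, I intend to verify Blair's two-part criterion: continuity of the extension is equivalent to (1)~$\mathrm{cl}_Y\{h_1 \leq a\} \cap \mathrm{cl}_Y\{h_1 \geq b\} = \emptyset$ for all $a < b$, and (2)~$\bigcap_{n \in \Nset}\mathrm{cl}_Y\{|h_1| \geq n\} = \emptyset$. Condition (2) falls out of the domination $h_1 \leq K g_{\{x_0\}}$: the pointwise inclusion $\{|h_1| \geq n\} \subset \{K g_{\{x_0\}} \geq n\}$ passes to $Y$-closures, and since $g_{\{x_0\}}$ extends to a real-valued (hence finite) continuous function on $Y$, the intersection on the right is empty. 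For condition~(1), the essential trick is to feed Blair the set $A = \{x : h_1(x) \leq a\}$ (nonempty, else (1) is vacuous) and argue through $g_A$: trivially $\{h_1 \leq a\} \subset \{g_A \leq 0\}$, while $h_1(x) \geq b$ and $y \in A$ give $b-a \leq h_1(x) - h_1(y) \leq K\, d(x,y)$, hence $\{h_1 \geq b\} \subset \{g_A \geq (b-a)/K\}$. Because $g_A$ extends continuously to $Y$, its level sets at the distinct values $0$ and $(b-a)/K$ have disjoint $Y$-closures, and this disjointness transfers back to the $h_1$-level sets.

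The main obstacle is spotting that the sparse supply of distance-to-set functions $\{g_A\}$ is already rich enough to witness Blair's separation condition: the insight is that taking $A$ to be the sublevel set $\{h_1 \leq a\}$ converts a gap $b-a$ in $h_1$-values into a gap $(b-a)/K$ in $g_A$-values through the Lipschitz estimate. Once this choice of $A$ is identified, everything else is routine bookkeeping, and the asymmetric decomposition $f = f(x_0) + h_1 - h_2$ merely arranges matters so that both summands are nonnegative and uniformly dominated by $K \cdot g_{\{x_0\}}$.
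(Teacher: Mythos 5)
Your proposal is correct and follows essentially the same route as the paper's own proof: the decomposition $f = f(x_0) + h_1 - h_2$, Blair's two-condition criterion, the domination $h_1 \leq K g_{\{x_0\}}$ for condition (2), and the choice $A = \{h_1 \leq a\}$ with the Lipschitz estimate yielding $\{h_1 \geq b\} \subset \{g_A \geq (b-a)/K\}$ for condition (1). You even make explicit the inequality $b-a \leq h_1(x)-h_1(y) \leq K\,d(x,y)$ that the paper leaves as ``easy to check.''
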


As we have said before, the Lipschitz realcompactification of $X$ is a subspace of its Samuel compactification. Next result contains a useful description of this subspace. Recall that the analogous description as subspace of $\beta X$ can be seen in \cite{garrido1}.

\begin{proposition} \label{characterization-H(Lip_d(X))} Let $(X,d)$ be a metric space and $x_{0}\in X$. Then
$$H(Lip_d(X))=\bigcup_{n\in \Nset} {\rm cl}_{s_d X}B_{d}[x_{0}, n]$$
where $B_d[x_0, n]$ denotes the closed ball in $X$ around $x_0$ and radius $n$.
\begin{proof} Let $\xi \in \text{cl}_{s_d X}B_{d}[x_0, n]$, for some $n\in \Nset$. It is clear that every $f\in Lip_{d}(X)$ must be bounded in $B_{d}[x_{0}, n]$ and then its extension $f^{*}(\xi)\neq \infty$. Hence $\xi \in H(Lip_d(X))$.

Conversely, let $\xi \in H(Lip_d(X))$ and consider $f=d(\cdot ,x_{0})\in Lip_{d}(X)$. Since $f^{*}(\xi)\neq \infty$, we can choose $n>f^{*}(\xi)$. Then $\xi\in \text{cl}_{s_d X}B_{d}[x_0, n]$. Otherwise, there exists an open neighbourhood $V$ of $\xi$ in $s_d X$ such that $V\cap X\subset\{x: d(x,x_0)=f(x)> n\}$. Since $\xi \in \text{cl}_{s_d X}V=cl_{s_d X}(V\cap X)$, we have that $f^{*}(\xi)\geq n$, which is a contradiction.
\end{proof}
\end{proposition}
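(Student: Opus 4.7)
My plan is to verify the two inclusions separately, using the description $H(Lip_d(X))=\{\xi\in s_dX : f^*(\xi)\neq\infty \text{ for all } f\in Lip_d(X)\}$ already established in the preliminaries together with the fact that each $f\in Lip_d(X)$ extends continuously to a map $f^*:s_dX\to\mathbb{R}\cup\{\infty\}$.

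For the inclusion $\bigcup_n \operatorname{cl}_{s_dX}B_d[x_0,n]\subset H(Lip_d(X))$, I would fix $\xi\in\operatorname{cl}_{s_dX}B_d[x_0,n]$ for some $n$ and any $f\in Lip_d(X)$ with Lipschitz constant $K$. On $B_d[x_0,n]$ the function $f$ is bounded: $|f(x)|\leq |f(x_0)|+Kn=:M$. Since $f^*$ is continuous into the compact space $\mathbb{R}\cup\{\infty\}$ and agrees with $f$ on the dense subset $X$, we get $f^*(\operatorname{cl}_{s_dX}B_d[x_0,n])\subset\overline{f(B_d[x_0,n])}\subset[-M,M]$, so $f^*(\xi)\neq\infty$. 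This is the routine direction.

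For the reverse inclusion, I would take $\xi\in H(Lip_d(X))$ and use the single test function $f(x)=d(x,x_0)$, which belongs to $Lip_d(X)$. By hypothesis, $f^*(\xi)$ is a finite real number, so I may choose $n\in\mathbb{N}$ with $n>f^*(\xi)$. I claim $\xi\in\operatorname{cl}_{s_dX}B_d[x_0,n]$. Assuming not, there exists an open neighbourhood $V$ of $\xi$ in $s_dX$ disjoint from $B_d[x_0,n]$, hence $V\cap X\subset\{x\in X: f(x)>n\}$. Using that $X$ is dense in $s_dX$ and $V$ is open, $\xi\in V\subset\operatorname{cl}_{s_dX}(V\cap X)\subset\operatorname{cl}_{s_dX}\{x:f(x)>n\}$, and continuity of $f^*$ forces $f^*(\xi)\geq n$, contradicting the choice of $n$.

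The only subtlety — and what I expect to be the main point requiring care — is the handling of the extension $f^*$ taking values in $\mathbb{R}\cup\{\infty\}$ rather than just $\mathbb{R}$, both when passing to closures in $s_dX$ and when arguing that bounded values on a dense subset pass to the closure. Everything else (density of $X$ in $s_dX$, the open set trick $\operatorname{cl}_{s_dX}V=\operatorname{cl}_{s_dX}(V\cap X)$, and the Lipschitz estimate on a ball) is routine.
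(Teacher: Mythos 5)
Your proposal is correct and follows essentially the same route as the paper: the forward inclusion via boundedness of Lipschitz functions on balls, and the reverse inclusion via the single test function $f=d(\cdot,x_0)$ together with the open-neighbourhood/density argument. No substantive differences to report.
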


From the last representation of $H(Lip_d(X))$ we can derive the following result.

\begin{corollary} \label{Lipschitz realcompact. is locally compact} The Lipschitz realcompactification $H(Lip_d(X))$ of the metric space $(X,d)$ is a Lindel\"{o}f and locally compact topological space.
\end{corollary}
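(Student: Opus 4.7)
The plan is to deduce both properties directly from the representation given in Proposition~\ref{characterization-H(Lip_d(X))}, namely
$$H(Lip_d(X))=\bigcup_{n\in \Nset}\text{cl}_{s_d X}B_d[x_0,n].$$
Each set $\text{cl}_{s_d X}B_d[x_0,n]$ is closed in the compact space $s_d X$, hence compact. Thus $H(Lip_d(X))$ is $\sigma$-compact, and in particular Lindel\"of.

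For local compactness, the strategy is to show that $H(Lip_d(X))$ is actually \emph{open} in $s_d X$, since every open subspace of a compact Hausdorff space is locally compact. To this end, I would single out the particular Lipschitz function $f(x)=d(x,x_0)$ and recall from the preliminaries that $f$ extends to a continuous map $f^*\colon s_d X\to \Rset\cup\{\infty\}$. The argument used in the second half of the proof of Proposition~\ref{characterization-H(Lip_d(X))} in fact establishes the equality
$$H(Lip_d(X))=\{\xi\in s_d X: f^*(\xi)\neq \infty\}=(f^*)^{-1}(\Rset).$$
Indeed, if $f^*(\xi)\neq\infty$ one chooses $n>f^*(\xi)$ and reproduces the contradiction argument to conclude $\xi\in\text{cl}_{s_d X}B_d[x_0,n]$; the converse inclusion is immediate since $f$ is bounded on each closed ball. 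Because $\Rset$ is open in the one-point compactification $\Rset\cup\{\infty\}$ and $f^*$ is continuous, $(f^*)^{-1}(\Rset)$ is open in $s_d X$, which finishes the argument.

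The potential subtlety here is justifying that the union $\bigcup_n \text{cl}_{s_d X}B_d[x_0,n]$ coincides with the preimage of $\Rset$ under $f^*$, rather than just being contained in it; but this follows from exactly the dichotomy exploited in the proof of Proposition~\ref{characterization-H(Lip_d(X))}, so no further work is required. Everything else is a routine appeal to standard facts (closed subsets of compacta are compact; $\sigma$-compact implies Lindel\"of; open subsets of compact Hausdorff spaces are locally compact).
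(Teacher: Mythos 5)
Your Lindel\"of argument is identical to the paper's ($\sigma$-compactness from the representation in Proposition~\ref{characterization-H(Lip_d(X))}). For local compactness, however, you take a genuinely different and correct route. The paper argues pointwise: given $\xi\in{\rm cl}_{s_d X}B_d[x_0,n]$, it uses the fact that $B_d[x_0,n]$ and $X\setminus B_d[x_0,n+1]$ are at positive distance, hence have disjoint closures in $s_d X$, to manufacture an open $V\ni\xi$ with ${\rm cl}_{s_d X}V\subset{\rm cl}_{s_d X}B_d[x_0,n+1]\subset H(Lip_d(X))$, i.e.\ an explicit compact neighbourhood. You instead observe that the two halves of the proof of Proposition~\ref{characterization-H(Lip_d(X))} together yield the stronger identity $H(Lip_d(X))=(f^*)^{-1}(\Rset)$ for the single function $f=d(\cdot,x_0)$, so that $H(Lip_d(X))$ is \emph{open} in the compact Hausdorff space $s_d X$ and local compactness is automatic. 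Your justification of the nontrivial inclusion $(f^*)^{-1}(\Rset)\subset\bigcup_n{\rm cl}_{s_d X}B_d[x_0,n]$ is exactly the dichotomy argument from the second half of that proposition, which indeed only uses $f^*(\xi)\neq\infty$ for this one $f$, so the step is sound. What your approach buys is the extra structural fact that the Lipschitz realcompactification is an open (indeed cozero-like, being the preimage of $\Rset$ under a single continuous map into $\Rset\cup\{\infty\}$) subset of the Samuel compactification; what the paper's approach buys is an explicit cofinal sequence of compact neighbourhoods, which it immediately exploits in the subsequent remark on hemicompactness. Both proofs are correct.
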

\begin{proof} Since $H(Lip_d(X))$ is $\sigma$-compact (i.e.,   countable union of compact sets) then it is Lindel\"{o}f. In order to see that it is also locally compact, let $\xi\in H(Lip_d(X))$, $x_0\in X $ and  $n\in \Nset$ such that $\xi \in \text{cl}_{s_d X}B_{d}[x_0, n]$. Now, since  the sets $B_{d}[x_0, n]$ and $X\setminus B_d[x_0, n+1]$ are at positive distance in $X$, then they have disjoint closures in $s_d X$ (see \cite{woods}). Then there is an open neighbourhood $V$ of $\xi$  in $s_dX$ such that $V\cap(X\setminus B_d[x_0, n+1])=\emptyset$. Therefore, $V\cap X\subset B[x_0, n+1]$, and then we have $$ \text{cl}_{s_d X}  V= \text{cl}_{s_d X} (V\cap X) \subset \text{cl}_{s_d X} B[x_0, n+1] \subset H(Lip_d(X)).$$
And we finish, since $\text{cl}_{s_d X}V$ is a compact neighbourhood of $\xi$ in $H(Lip_d(X))$.
\end{proof}
\begin{remark} We can deduce that  $H(Lip_d(X))$ is, in addition,   a hemicompact space. Recall that a topological space is said to be {\it hemicompact} if in the family of all its compact subspaces,  ordered by inclusion, there exists a countable cofinal subfamily. In this case we can see easily that for every compact $K\subset H(Lip_d(X))$ and every $x_0\in X$ there is $n\in \Nset$ such that $K\subset{\rm cl}_{s_d X}B_{d}[x_{0}, n]$.
\end{remark}

From the above topological results it is clear that  not every (realcompact) metric space is Lipschitz realcompact.  Recall that in this framework we say that a metric space is {\it Lipschitz realcompact} whenever $X=H(Lip_d(X))$ (see \cite{garrido1}). More precisely we can derive at once from Proposition \ref{characterization-H(Lip_d(X))} the following characterization.

\begin{proposition} \label{Lipschitz-realcompact} A metric space $(X,d)$ is Lipschitz realcompact if and only it satisfies the Heine-Borel property, i.e., every closed and $d$-bounded subset in $X$ is compact.
\end{proposition}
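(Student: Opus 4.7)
The plan is to deduce both directions from the representation $H(Lip_d(X))=\bigcup_{n\in \Nset}\mathrm{cl}_{s_d X}B_{d}[x_{0},n]$ provided by Proposition \ref{characterization-H(Lip_d(X))}, combined with two standard facts about the embedding $X\hookrightarrow s_d X$: first, $X$ is dense in $s_d X$, so for any subset $A\subset X$ one has $\mathrm{cl}_{s_d X}A\cap X=\mathrm{cl}_X A$; second, $s_d X$ is compact Hausdorff, hence a compact subset of $X$ is already closed in $s_d X$ and coincides with its closure there.

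For the forward direction, assume $X=H(Lip_d(X))$ and let $F\subset X$ be closed and $d$-bounded. Fix any $x_0\in X$; there is $n\in\Nset$ with $F\subset B_d[x_0,n]$. By Proposition \ref{characterization-H(Lip_d(X))}, $\mathrm{cl}_{s_d X}B_d[x_0,n]\subset H(Lip_d(X))=X$, so intersecting with $X$ gives
\[
\mathrm{cl}_{s_d X}B_d[x_0,n]=\mathrm{cl}_{s_d X}B_d[x_0,n]\cap X=\mathrm{cl}_X B_d[x_0,n]=B_d[x_0,n],
\]
where the last equality uses that $B_d[x_0,n]$ is already closed in $X$. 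Thus $B_d[x_0,n]$ is compact (being closed in the compact space $s_d X$), and $F$ is a closed subset of this compact set, hence compact.

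For the converse, assume the Heine-Borel property. Then every closed ball $B_d[x_0,n]$ is compact, so it is closed in $s_d X$ and $\mathrm{cl}_{s_d X}B_d[x_0,n]=B_d[x_0,n]\subset X$. Applying Proposition \ref{characterization-H(Lip_d(X))} yields
\[
H(Lip_d(X))=\bigcup_{n\in\Nset}\mathrm{cl}_{s_d X}B_d[x_0,n]=\bigcup_{n\in\Nset}B_d[x_0,n]=X,
\]
so $X$ is Lipschitz realcompact.

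There is essentially no obstacle here, since Proposition \ref{characterization-H(Lip_d(X))} does all the heavy lifting; the only point that requires a moment of care is the identification $\mathrm{cl}_{s_d X}B_d[x_0,n]=B_d[x_0,n]$ in each direction, which I would justify by the density of $X$ in $s_d X$ in one direction and by compactness of the ball in the other.
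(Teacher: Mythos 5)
Your proof is correct and follows exactly the route the paper intends: the paper states that the proposition "can be derived at once from Proposition \ref{characterization-H(Lip_d(X))}", and your argument is precisely that derivation written out in full (the only cosmetic remark is that the identity $\mathrm{cl}_{s_d X}A\cap X=\mathrm{cl}_X A$ is a general fact about subspace topologies and does not need density of $X$ in $s_d X$).
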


So, we can get different examples of metric spaces being or not Lipschitz realcompact. For instance, in the setting of Banach spaces to be Lipschitz realcompact is equivalent to have finite dimension. On the other hand,  note that Lipschitz realcompactness is not a uniform property in the frame of metric spaces. Indeed, from the above Proposition \ref{Lipschitz-realcompact}, the real line $\Rset$ is Lipschitz realcompact with the usual metric $d$ but not with the uniformly equivalent metric $\hat d=\min \{ 1, d\}$.

\smallskip

Then a natural question is when, for a metric space $(X,d)$, there exists an equivalent (resp. uniformly equivalent) metric $\rho$ such that $(X,\rho)$ is Lipschitz realcompact. In other words, we wonder when there exists an equivalent (resp. uniformly equivalent)  metric with the Heine-Borel property. These problems were studied respectively by Vaughan in \cite{vaughan} and  by Janos and Williamson in \cite{janos} (see also \cite{merono1}). From their results, we have the following fact (compare with last Corollary \ref{Lipschitz realcompact. is locally compact}).

\begin{corollary} For a metric space $(X,d)$ there exists a (uniformly) equivalent metric $\rho$ with $X=H(Lip_{\rho}(X))$ if and only if $X$ is Lindel\"{o}f and (uniformly) locally compact.
\end{corollary}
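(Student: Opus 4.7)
The plan is to reduce the statement, via Proposition \ref{Lipschitz-realcompact}, to the existence of a (uniformly) equivalent metric on $X$ with the Heine-Borel property, and then invoke the classical results of Vaughan \cite{vaughan} (for the topological case) and Janos-Williamson \cite{janos} (for the uniform case) which characterize exactly when such metrics exist.

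For the forward implication, suppose $\rho$ is a (uniformly) equivalent metric on $X$ with $X=H(Lip_\rho(X))$. By the previous Corollary \ref{Lipschitz realcompact. is locally compact}, the space $H(Lip_\rho(X))$ is Lindel\"of and locally compact. Since these are topological properties and $\rho$ is at least topologically equivalent to $d$, $X$ itself is Lindel\"of and locally compact, giving the ``non-parenthetical'' part. In the uniform case, one sharpens this: Proposition \ref{Lipschitz-realcompact} tells us that $(X,\rho)$ has the Heine-Borel property, so in particular every closed $\rho$-ball of fixed positive radius (say radius $1$) is compact; since $\rho$ is uniformly equivalent to $d$, this furnishes a uniform covering of $X$ by compact balls, i.e.\ uniform local compactness.

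For the reverse implication, assume $X$ is Lindel\"of and locally compact. Vaughan's theorem gives an equivalent metric $\rho$ on $X$ with the Heine-Borel property, and Proposition \ref{Lipschitz-realcompact} then yields $X=H(Lip_\rho(X))$. If in addition $X$ is uniformly locally compact (in the uniformity of $d$), the Janos-Williamson theorem (see also \cite{merono1}) produces a uniformly equivalent metric $\rho$ with the Heine-Borel property, and again Proposition \ref{Lipschitz-realcompact} finishes the argument.

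The non-routine part of this plan is really the passage between uniform local compactness and the uniform Heine-Borel property: the forward direction is easy as sketched, but the reverse requires the Janos-Williamson construction, which is the substantive content being quoted. Everything else is either definitional or a direct application of the previously established Proposition \ref{Lipschitz-realcompact} and Corollary \ref{Lipschitz realcompact. is locally compact}.
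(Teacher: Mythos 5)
Your proposal is correct and follows essentially the same route as the paper, which likewise reduces the statement to the existence of an equivalent (resp.\ uniformly equivalent) metric with the Heine-Borel property via Proposition \ref{Lipschitz-realcompact} and then quotes Vaughan and Janos--Williamson. The extra details you supply for the forward implication (using Corollary \ref{Lipschitz realcompact. is locally compact} and the compactness of closed $\rho$-balls) are sound and consistent with what the paper leaves implicit.
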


Now, if we  consider in the metric space $(X,d)$ a uniformly equivalent metric $\rho$, then it is clear that both metrics  will provide the same Samuel compactification, i.e. $s_d X\equiv s_\rho X$, since they define  the same  uniformly continuous functions on $X$. But, in general, there will be two different Lipschitz realcompactifications, namely $H(Lip_d(X))$ and $H(Lip_\rho(X))$. Taking into account that both realcompactifications are  contained in  the space $s_d X$, it is easy to see that, the order relation $H(Lip_d(X))\leq H(Lip_\rho(X))$ is equivalent to the inclusion relation $H(Lip_d(X))\supset H(Lip_\rho(X))$. Indeed, if $H(Lip_d(X))\leq H(Lip_\rho(X))$, there is some continuous function $h: H(Lip_\rho (X))\to H(Lip_d(X))\subset s_d X$ leaving $X$ pointwise fixed. Therefore, $h$ and the inclusion map $i: H(Lip_\rho (X))\to s_d X$ are two continuous functions that  coincide in the dense subspace $X$,  then $h=i$, that is $H(Lip_\rho (X))\subset  H(Lip_d(X))$.

\medskip

Next, let us see when these realcompactifications are comparable and also when they are equivalent.

\begin{proposition} \label{comparation of Lipschitz-realcomp.} Let $(X,d)$ be a metric space and $\rho$ a uniformly equivalent metric. The following are equivalent:
\begin{enumerate}
\item $H(Lip_d(X))\leq  H(Lip_\rho(X))$.
\item $H(Lip_d(X)) \supset H(Lip_\rho(X))$.
\item  If $B\subset X$ is $\rho$-bounded then it is $d$-bounded.
\end{enumerate}
\end{proposition}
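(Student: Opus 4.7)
The equivalence of (1) and (2) has essentially been handled in the paragraph immediately preceding the statement: since $s_d X \equiv s_\rho X$ (the two metrics produce the same bounded uniformly continuous functions) and both realcompactifications sit inside this common Samuel compactification, any continuous map $H(Lip_\rho(X)) \to H(Lip_d(X))$ fixing $X$ must agree with the inclusion into $s_d X$ by density, and vice versa. So the whole job is to prove (2) $\Leftrightarrow$ (3).

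For (3) $\Rightarrow$ (2), the plan is to apply Proposition \ref{characterization-H(Lip_d(X))} in both metrics. Fix $x_0 \in X$ and take any $\xi \in H(Lip_\rho(X))$. By that proposition, $\xi \in \text{cl}_{s_\rho X} B_\rho[x_0,n]$ for some $n$, and since $s_\rho X$ and $s_d X$ are the same compactification of $X$ this closure is also $\text{cl}_{s_d X} B_\rho[x_0,n]$. Assumption (3) says $B_\rho[x_0,n]$ is $d$-bounded, so it lies inside some $B_d[x_0,m]$. Taking $s_d X$-closures and applying Proposition \ref{characterization-H(Lip_d(X))} again (now on the $d$-side) yields $\xi \in \text{cl}_{s_d X} B_d[x_0,m] \subset H(Lip_d(X))$.

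For (2) $\Rightarrow$ (3), suppose $B$ is $\rho$-bounded, say $B \subset B_\rho[x_0,n]$. Then $\text{cl}_{s_d X} B \subset \text{cl}_{s_\rho X} B_\rho[x_0,n] \subset H(Lip_\rho(X)) \subset H(Lip_d(X))$, using Proposition \ref{characterization-H(Lip_d(X))} for $\rho$ and then hypothesis (2). The function $f(x)=d(x,x_0)$ is Lipschitz on $(X,d)$, so its extension $f^*$ is real-valued (not $\infty$) on all of $H(Lip_d(X))$; being continuous on the compact set $\text{cl}_{s_d X} B$, it is bounded there, which forces $f$ to be bounded on $B$, i.e. $B$ is $d$-bounded.

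The only mildly delicate point is recognizing that the switch between $\text{cl}_{s_d X}$ and $\text{cl}_{s_\rho X}$ is harmless because the two Samuel compactifications coincide; the rest is a direct bookkeeping use of the ball description of Proposition \ref{characterization-H(Lip_d(X))} and the fact that Lipschitz functions in $(X,d)$ are bounded on $d$-bounded sets, with no real obstacle to overcome.
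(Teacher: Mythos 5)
Your proposal is correct and follows essentially the same route as the paper: the equivalence of (1) and (2) is delegated to the preceding discussion, (2)~$\Rightarrow$~(3) uses compactness of $\mathrm{cl}_{s_d X}B_\rho[x_0,n]$ inside $H(Lip_d(X))$ together with the extension of $d(\cdot,x_0)$, and (3)~$\Rightarrow$~(2) is the ball-inclusion bookkeeping via Proposition~\ref{characterization-H(Lip_d(X))}. The only cosmetic difference is that you argue (3)~$\Rightarrow$~(2) pointwise for a given $\xi$ while the paper writes the same containment as a union over $n$.
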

\begin{proof} As we have said in the above paragraph conditions $(1)$ and $(2)$ are equivalent.

$(2) \Rightarrow (3)$ Let $B$  a $\rho$-bounded subset of $X$,  $x_0\in X$, and  $n\in \Nset$ such that $B\subset B_\rho[x_0, n]$. Then,
$$B\subset {\rm cl}_{s_\rho X} B_\rho[x_0, n] \subset H(Lip_\rho(X)) \subset H(Lip_d(X)).$$
Since ${\rm cl}_{s_\rho X} B_\rho[x_0, n]$ is a compact subspace of $H(Lip_d(X))$, then every real continuous function on $H(Lip_d(X))$ must be bounded on  $B$. Thus,  if $f^*$ is the continuous extension to $H(Lip_d(X))$ of the Lipschitz function $f=d(x_0,\cdot)$ then, that $f$ is bounded on $B$ means that  $B$ is $d$-bounded.

$(3) \Rightarrow (2)$ Let $x_0\in X$. This condition (3) says that for each $n\in \Nset$ there exists some $m_n\in \Nset$ such that $B_\rho[x_0, n]\subset B_d[x_0, m_n]$. And we finish by applying Proposition \ref{characterization-H(Lip_d(X))}  since, $$H(Lip_\rho(X))=\bigcup_{n\in \Nset} {\rm cl}_{s_\rho X}B_\rho [x_{0}, n] \subset \bigcup_{n\in \Nset} {\rm cl}_{s_d X}B_d[x_{0}, m_n]\subset H(Lip_d(X)).$$ \end{proof}

\begin{corollary} Let $(X,d)$ be a metric space and $\rho$ a uniformly equivalent metric. Then,   $H(Lip_d(X))$ and $H(Lip_\rho(X))$ are  equivalent realcompactifications of $X$
if and only if both metrics have the same bounded subsets (i.e., they are boundedly equivalent).
\end{corollary}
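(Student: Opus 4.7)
The plan is to derive this corollary directly from Proposition \ref{comparation of Lipschitz-realcomp.} by applying it symmetrically in both directions. The key observation is that uniform equivalence of metrics is a symmetric relation, so the hypotheses of that proposition are satisfied whether we play the role of ``$d$'' with $d$ itself or with $\rho$.

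First, I would invoke the general fact about realcompactifications recalled in the preliminaries: two realcompactifications $\alpha_1 X$ and $\alpha_2 X$ are equivalent if and only if $\alpha_1 X \leq \alpha_2 X$ and $\alpha_2 X \leq \alpha_1 X$ hold simultaneously. Thus, the equivalence of $H(Lip_d(X))$ and $H(Lip_\rho(X))$ amounts to the conjunction
$$H(Lip_d(X)) \leq H(Lip_\rho(X)) \quad \text{and} \quad H(Lip_\rho(X)) \leq H(Lip_d(X)).$$

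Next, I would apply Proposition \ref{comparation of Lipschitz-realcomp.} to each inequality. The first inequality is equivalent, by the proposition, to the statement that every $\rho$-bounded subset of $X$ is $d$-bounded. Swapping the roles of $d$ and $\rho$ (legitimate because $\rho \stackrel{u}{\sim} d$ implies $d \stackrel{u}{\sim} \rho$), the second inequality is equivalent to the statement that every $d$-bounded subset of $X$ is $\rho$-bounded.

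Putting the two conditions together, the conjunction is precisely the assertion that the families of $d$-bounded and $\rho$-bounded subsets of $X$ coincide, i.e., that $d$ and $\rho$ are boundedly equivalent. This completes the argument; there is no real obstacle, since the corollary is essentially a formal symmetrization of the preceding proposition.
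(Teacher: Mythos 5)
Your argument is correct and is exactly the intended one: the paper states this corollary without proof precisely because it follows by applying Proposition \ref{comparation of Lipschitz-realcomp.} symmetrically in both directions, as you do. No gaps.
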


Recall that two metrics $d$ and  $\rho$ on $X$ are said to be {\it Lipschitz equivalent} when the identity maps $id:(X,d)\to (X,\rho)$ and $id:(X,\rho)\to (X,d)$ are Lipschitz. It is clear that Lipschitz equivalent metrics on $X$ provide the same bounded  subsets, the same Lipschitz functions and the same Lipschitz realcompactifications of $X$. On the other hand, as the next example shows, there exist uniformly equivalent metrics with the same bounded subsets, and hence (according to last result) with equivalent Lipschitz realcompactifications which are not Lipschitz equivalent.

\begin{example} \label{example 1} Consider on the real interval $X=[0,\infty)$ the usual metric $d$ and the metric $\rho$ defined by $\rho(x,y)=|\sqrt{x}-\sqrt{y}|$, for $x,y \in X$. Since the function $f(t)=\sqrt{t}$, for $t\geq 0$, is uniformly continuous but not Lipschitz with the usual metric,  we obtain that these metrics are uniformly equivalent but not Lipschitz equivalent. On the other hand, it is easy to see that both metrics have the same bounded sets, and therefore they give the same Lipschitz realcompactification.  Moreover, since $X$ is Heine-Borel with both metrics, then we know that in fact $X=H(Lip_{d}(X))=H(Lip_\rho(X))$.
\end{example}

We finish this section with a result (whose proof is immediate)  showing when the Lipschitz realcompactification of $X$ coincides with the Samuel compactification $s_d X$.

\smallskip
\begin{proposition} \label{Lipschitz realcompact. = Samuel compact.} Let $(X,d)$ a metric space. The following are equivalent:
\begin{enumerate}
\item $H(Lip_{d}(X))=s_d X$.
\item $Lip_d(X)=Lip^*_d(X)$
\item $X$ is $d$-bounded.
\end{enumerate}
\end{proposition}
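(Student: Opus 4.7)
The plan is to close a short cycle $(3) \Rightarrow (1) \Rightarrow (2) \Rightarrow (3)$, leaning almost entirely on material already developed in the section. Since the author explicitly says the proof is immediate, I do not expect any serious obstacle, only a careful bookkeeping of the characterizations already proved.

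For $(3) \Rightarrow (1)$ I would invoke Proposition \ref{characterization-H(Lip_d(X))} directly. Fixing any $x_0 \in X$, the hypothesis that $X$ is $d$-bounded gives an $n \in \Nset$ with $X = B_d[x_0, n]$, so
\[
H(Lip_d(X)) = \bigcup_{n \in \Nset} \mathrm{cl}_{s_d X} B_d[x_0, n] = \mathrm{cl}_{s_d X} X = s_d X,
\]
since $X$ is dense in its Samuel compactification.

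For $(1) \Rightarrow (2)$ I would use the description
\[
H(Lip_d(X)) = \{\xi \in s_d X : f^*(\xi) \neq \infty \text{ for all } f \in Lip_d(X)\}
\]
from the preliminary section. If this set equals the full compact space $s_d X$, then for every $f \in Lip_d(X)$ the canonical extension $f^* : s_d X \to \Rset \cup \{\infty\}$ never takes the value $\infty$, so $f^*$ is a continuous real-valued function on the compact space $s_d X$, hence bounded; restricting to $X$ shows $f \in Lip^*_d(X)$. The reverse inclusion $Lip^*_d(X) \subset Lip_d(X)$ is trivial.

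For $(2) \Rightarrow (3)$ I would take a single test function: pick any $x_0 \in X$ and consider $f(x) = d(x, x_0)$, which is a $1$-Lipschitz function on $X$. By hypothesis $f$ lies in $Lip^*_d(X)$, so it is bounded, and this is exactly the statement that $X$ is $d$-bounded. This closes the cycle and completes the proof.
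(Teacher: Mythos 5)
Your proof is correct, and since the paper explicitly omits the argument as immediate, your cycle $(3)\Rightarrow(1)\Rightarrow(2)\Rightarrow(3)$ — using Proposition \ref{characterization-H(Lip_d(X))}, the description of $H(Lip_d(X))$ via the extensions $f^*$, and the distance function $d(\cdot,x_0)$ as test function — is precisely the intended routine verification. Nothing to add.
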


\smallskip

\section{Some types of uniform boundedness}

\smallskip

As we have seen in the last section, the bounded subsets play an important role in our study. Now, we are going to recall two boundedness notions, preserved by uniformly continuous functions, that will be very useful later. We are referring to Bourbaki-boundedness and to $\alpha$-boundedness. These concepts  can be also defined in the general setting of uniform spaces but we are only interested here in their metric versions.

\smallskip

Let $(X,d)$ a metric space, and for $x_0\in X$ and $\varepsilon >0$,   define  $B^{1}_{d}(x_0, \varepsilon)=B_d(x_0, \varepsilon)$, where $B_d(x_0, \varepsilon)$ is the open ball of center $x_0$ and radius $\varepsilon$.  For every $m\geq 2$, let $$B^{m}_{d}(x_0, \varepsilon)=\bigcup \big\{B_{d}(x, \varepsilon):x\in B^{m-1}_{d}(x_0, \varepsilon)\big\}$$ and finally denote by
$B^{\infty}_{d}(x_0, \varepsilon)=\bigcup_{m\in \Nset}B^{m}_{d}(x_0, \varepsilon).$

\medskip

An $\varepsilon$-{\it chain} joining the points  $x$ and $y$ of $X$ is a finite sequence  $x=u_{0}, u_{1},..., u_{m}=y$ in $X$, such that  $d(u_{k-1},u_{k})<\varepsilon$, for $k=1,...,m$, where  $m$ indicates the length of the chain. We say that $x$ and $y$  are $\varepsilon$-\textit{chained} in $X$, when there exists some   $\varepsilon$-chain in $X$ joining  $x$ and $y$. Note that to be $\varepsilon$-chained defines an equivalence relation on $X$. Clearly this equivalence relation generates a  clopen uniform partition of $X$, where the equivalence class of each point $x$ is just the set $B^{\infty}_{d}(x, \varepsilon)$. These equivalences classes are called  $\varepsilon$-{\it chainable components} of $X$. Choosing in every $\varepsilon$-chainable component a representative point, say $x_i$, for $i$ running in a set $I_\varepsilon$ (which describes the number of $\varepsilon$ components), we can write:
$$X=\biguplus_{i\in I_{\varepsilon}}B^{\infty}_{d}(x_i, \varepsilon)$$
with  $B^{\infty}_{d}(x_i, \varepsilon)\cap B^{\infty}_{d}(x_j, \varepsilon)=\emptyset$, for    $i,j\in I_\varepsilon$, $i\neq j$, where the symbol $\biguplus$ denotes the disjoint union.

\begin{definition} A subset $B$ of a metric space $(X,d)$ is called $\alpha$-bounded in $X$ if for all $\varepsilon >0$ there exist finitely many points $x_1,...,x_n \in X$ such that $$B\subset \bigcup_{i=1}^{n} B^{\infty}_{d}(x_i,\varepsilon).$$ If $B=X$ then we will say that $X$ is an $\alpha$-bounded metric space.
\end{definition}

The notion of $\alpha$-boundedness was introduced and studied by Tashjian in the context of metric spaces in \cite{gloria1} and for  uniform spaces in \cite{gloria2}.

\begin{definition} A subset $B$ of a metric space $(X,d)$ is said to be Bourbaki-bounded in $X$ if for every $\varepsilon >0$ there exist $m\in \Nset$ and finitely many points $x_{1},...,x_{n}\in X$ such that $$B\subset \bigcup_{i=1}^{n} B^{m}_{d}(x_{i}, \varepsilon).$$ If $B=X$ then we say that $X$ is a Bourbaki-bounded metric space.
\end{definition}

Bourbaki-bounded subsets in metric spaces was firstly considered by Atsuji  under the name of {\it finitely-chainable} subsets (\cite{atsuji}). In the general context of uniform spaces we refer to the paper by Hejcman \cite{hejcman} where they are called {\it uniformly bounded} subsets.
Very recently Bourbaki-boundedness have been considered (with this precise name) in different frameworks, see for instance \cite{beer-garrido1}, \cite{beer-garrido2},  \cite{merono1}, \cite{merono2} and \cite{merono3}.

\smallskip

Note that if in the above definition, we have always $m=1$ (or even $m\leq m_0$ for some $m_0\in \Nset$), for every $\varepsilon >0$ then we get just total boundedness. It is important to point out here that for a subset to be Bourbaki-bounded or $\alpha$-bounded are not intrinsic properties, i.e., they depend on the ambient space where it is. For instance, if  $(\ell _2, \|\cdot\|)$ is the classical Hilbert space of all the real square summable sequences, and  $B=\{e_n: n\in \Nset\}$ is its  standard basis, then it is easy to check that $B$  is  Bourbaki-bounded and $\alpha$-bounded in $\ell _2$ but not in  itself.

\smallskip

On the other hand, it is clear that  $\alpha$-boundedness and Bourbaki-boundedness are uniform properties. And, in particular, uniformly equivalent metrics on a set $X$ provide the same $\alpha$-bounded and the same Bourbaki-bounded subsets. Note also that  connected metric spaces, as well as uniformly connected metric spaces (i.e., spaces that can not be the union of two sets at positive distance, as the rational numbers $\Qset$ with the usual metric), are  $\alpha$-bounded. On the other hand, every Bourbaki-bounded subset in the metric space $(X,d)$ is $\alpha$-bounded and also $d$-bounded,  but this last  does not occur with $\alpha$-boundedness. The real line $\Rset$ with the usual metric  is an example of an $\alpha$-bounded metric space which is not Bourbaki-bounded. In fact, in normed spaces, the Bourbaki-bounded subsets coincide with the bounded in the norm. Thus, if $B$ is the unit closed ball  of an infinite dimensional normed space, then $B$ is a Bourbaki-bounded subset which is not totally bounded. For further information related to these properties  we refer to \cite{merono1} and \cite{merono2}.

\smallskip

Now we are going to stress how  $\alpha$-boundedness and Bourbaki-boundedness are uniform boundedness notions.  For that, let   $f:X\rightarrow \Nset$ be a uniformly continuous  function,  where $\Nset$ is considered as a metric subspace of $\Rset$ with the usual metric, we will write $f\in U_{d}(X,\Nset)$. Then, if  $f\in U_{d}(X,\Nset)$ and $\varepsilon=1$, there must  exist  some $n\in \Nset$ such that $d(x,y)<1/n$ implies  $|f(x)-f(y)| < 1$. Clearly, this means that  $f$ will  be constant on each $1/n$-chainable component, and then the value of $f$ on each of these $1/n$-chains coincides with the value of $f$ on the corresponding representative point. Therefore, we can write $$U_{d}(X,\Nset)=\bigcup_{n\in \Nset} U_{d}^{n}(X,\Nset)$$  where   $U_{d}^{n}(X,\Nset)$, $n\in \Nset$,  denotes the family of functions defined as $$U_{d}^{n}(X,\Nset):=\Big\{f:\big\{x_i: \, i\in I_{1/n} \big\}\rightarrow \Nset\Big\}$$ where $I_{1/n}$ is, as we can say before,  the set of all the indexes describing the  $1/n$-chainable components of $(X,d)$.

\smallskip

From all the above, the following theorem given by Tashjian in  \cite{gloria1} is now clear.

\begin{proposition} \label{gloria-result} {\rm ({\sc Tashjian} \cite{gloria1}) } Let $B$ be a subset of a metric space $(X,d)$. The following statements are equivalent:
\begin{enumerate}
\item $B$ is $\alpha$-bounded in $X$.

\item Every function $f\in U_d(X, \Nset)$ is bounded on $B$.

\end{enumerate}
\end{proposition}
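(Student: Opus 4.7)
The plan is to leverage the decomposition $U_d(X,\Nset)=\bigcup_{n\in\Nset}U_d^n(X,\Nset)$ already established before the statement: a $\Nset$-valued function on $X$ is uniformly continuous if and only if it is constant on the $1/n$-chainable components of $(X,d)$ for some $n\in\Nset$. So one direction amounts to counting components, and the other direction to constructing such a function.

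For the implication $(1)\Rightarrow(2)$, I would fix $f\in U_d(X,\Nset)$ and use uniform continuity (at $\varepsilon=1$) to produce $n\in\Nset$ such that $f$ is constant on every $1/n$-chainable component $B^{\infty}_d(x_i,1/n)$. Applying $\alpha$-boundedness of $B$ to the value $\varepsilon=1/n$ yields finitely many points $y_1,\ldots,y_k$ with $B\subset\bigcup_{j=1}^{k}B^{\infty}_d(y_j,1/n)$. Each $B^{\infty}_d(y_j,1/n)$ is a single $1/n$-chainable component, on which $f$ takes a single value, so $f(B)$ is contained in the finite set $\{f(y_1),\ldots,f(y_k)\}$ and is therefore bounded.

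For the contrapositive of $(2)\Rightarrow(1)$, assume $B$ is not $\alpha$-bounded. Then there exists $\varepsilon_0>0$ such that $B$ is not covered by finitely many sets $B^{\infty}_d(x,\varepsilon_0)$. Using the partition $X=\biguplus_{i\in I_{\varepsilon_0}}B^{\infty}_d(x_i,\varepsilon_0)$ into $\varepsilon_0$-chainable components, this forces $B$ to meet infinitely many of the components; pick an injective sequence $(i_k)_{k\in\Nset}$ in $I_{\varepsilon_0}$ with $B\cap B^{\infty}_d(x_{i_k},\varepsilon_0)\neq\emptyset$ for every $k$. I would then define $f:X\to\Nset$ by setting $f(x)=k$ if $x\in B^{\infty}_d(x_{i_k},\varepsilon_0)$ and $f(x)=0$ on the remaining components. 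Because $f$ is constant on each $\varepsilon_0$-chainable component, $d(x,y)<\varepsilon_0$ implies $f(x)=f(y)$, so $f\in U_d(X,\Nset)$; however $f(B)\supset\Nset$, contradicting (2).

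I do not expect any genuine obstacle: the delicate point is only that the construction in the second direction really does produce a uniformly continuous integer-valued function, and this is guaranteed by the preceding remark that membership in the same $\varepsilon$-chainable component is an equivalence relation generating a clopen uniform partition. Once this is in hand, both directions are essentially bookkeeping on the components, as the paper's preceding paragraph already anticipates.
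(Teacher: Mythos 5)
Your proposal is correct and follows exactly the route the paper intends: the paper leaves the proposition as an immediate consequence of the decomposition $U_{d}(X,\Nset)=\bigcup_{n\in\Nset}U_{d}^{n}(X,\Nset)$ (functions constant on $1/n$-chainable components), and your two directions are precisely the bookkeeping on components that this decomposition anticipates. No gaps.
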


\smallskip

Next, by using the above notation, we are going to define a family of uniform equivalent metrics in the space $(X,d)$ that will permit not only  to characterize the Bourbaki-boundedness but also they will be very useful along the paper. The definition of these metrics are inspired by those considered by Hejcman in \cite{hejcman}.

\smallskip

For that, let  $n\in \Nset$, and consider all the $1/n$-chainable components of $X$. Next, on each  $1/n$-chainable component $B^{\infty}_{d}(x_i, 1/n)$ of $X$, $i\in I_{1/n}$, we can take the following metric $d_{1/n}$,  $$d_{1/n}(x,y)=\inf \sum_{k=1}^{m}d(u_{k-1},u_{k})$$
for $x,y\in B^{\infty}_{d}(x_i, 1/n)$, and  where the infimum is taken over all the $1/n$-chains, $x=u_0, u_1,..., u_m=y$ joining $x$ and $y$. Note that we may consider only those chains such that, if $m\geq 2$ then $d(u_{k-1},u_{k})+d(u_{k},u_{k+1})\geq 1/n$ since otherwise, due to the triangle inequality $d(u_{k-1},u_{k+1})\leq d(u_{k-1},u_{k})+d(u_{k},u_{k+1})<1/n$, the point $u_{k}$  can be removed from the initial chain. We will call these chains \textit{irreducible chains}.

\smallskip

It is easy to check that $d_{1/n}$ is a metric on each $1/n$-chainable component in a separately way, but we want to extend it to the whole space $X$. Now, for $f\in U^{n}_{d}(X,\Nset)$,  define  $\rho_{n,f}:X\times X\rightarrow [0,\infty)$ by,
$$\rho_{n,f}(x,y)= d_{1/n}(x,y) \text {\,\, if  \,\,}  x,y \in  B^{\infty}_{d}(x_i, 1/n), \, i\in I_{1/n}$$ and when $x\in  B^{\infty}_{d}(x_i, 1/n)$ and $y\in B^{\infty}_{d}(x_j, 1/n)$,  $i, j\in I_{1/n}$ with $i\neq j$, then
$$\rho_{n,f} (x,y)=d_{1/n}(x,x_i) + f(x_i)+ d_{1/n}(y, x_j)+f(x_j).$$

In order to check that $\rho _{n,f}$ is indeed a metric on $X$, we only  need to prove the triangle inequality for points $x, y, z$ which not all of them are  in the same $1/n$-chainable component. So, first take $x,y\in B^{\infty}_{d}(x_i, 1/n)$ and $z\in B^{\infty}_{d}(x_j, 1/n)$, with $i\neq j$, then
$$\rho _{n,f}(x,y)=d_{1/n}(x,y) \leq d_{1/n}(x, x_i)+ d_{1/n}(y, x_i)\leq$$
$$\leq d_{1/n}(x, x_i)+ d_{1/n}(y, x_i)+ 2\big[d_{1/n}(z, x_j)+f(x_i)+ f(x_j)\big]=$$
$$=\rho _{n,f}(x,z)+\rho _{n,f}(z,y).$$
Similarly when $x,z\in B^{\infty}_{d}(x_i, 1/n)$ and $y\in B^{\infty}_{d}(x_j, 1/n)$, with $i\neq j$, we have
$$\rho _{n,f} (x,y)=d_{1/n}(x, x_i)+ f(x_i) + d_{1/n} (y, x_j)+f(x_j)\leq $$
$$\leq d_{1/n}(x, z)+ d_{1/n}(z, x_i) +f(x_i) + d_{1/n} (y, x_j)+f(x_j)=$$
$$=\rho_{n,f}(x,z)+\rho_{n,f}(z,y).$$
And finally, suppose $x\in B^{\infty}_{d}(x_i, 1/n)$, $y\in B^{\infty}_{d}(x_j, 1/n)$ and $z\in B^{\infty}_{d}(x_k, 1/n)$, with $i\neq j\neq k\neq i$, then
$$\rho _{n,f} (x,y)=d_{1/n}(x,x_i)+ f(x_i) + d_{1/n}(y,x_j)+f(x_j)\leq $$
$$d_{1/n}(x,x_i)+ f(x_i) + d_{1/n}(y,x_j)+f(x_j)+ 2\big[d_{1/n}(z, x_k)+f(x_k)\big]=$$
$$=\rho _{n,f}(x,z)+\rho _{n,f}(z,y).$$

\medskip

And therefore $\rho _{n,f}$ is in fact a metric on $X$. On the other hand, note that $\rho_{n,f} (x,y)=d_{1/n}(x,y)=d(x,y)$ whenever $d(x,y)<1/n$ or $\rho_{n,f} (x,y)< 1/n$. That is, $d$ and $\rho_{n,f}$ are not only uniformly equivalent metrics but they are what is called  \textit{uniformly locally identical} (notion defined by Janos and Williamson in \cite{janos}). In particular, these metrics are also \textit{Lipschitz in the small equivalent}, i.e., the identity maps $id:(X,d)\rightarrow (X,\rho_{n,f})$ and $id:(X,\rho_{n,f})\rightarrow (X,d)$ are Lipschitz in the small. Recall that a function $f:(X,d)\rightarrow (Y,d\,')$ is said to be {\it Lipschitz in the small} if there exist $\delta>0$ and some $K>0$ such that $d\,'(f(x),f(y))\leq K\cdot d(x,y)$ whenever $d(x,y)<\delta.$ That is, $f$ is $K$-Lipschitz on every $d$-ball of radius $\delta$. This kind of uniform maps will play an important role in our study. For further information about these functions we refer to \cite{luukkainen} and \cite {garrido2}.

\smallskip

It must be pointed here that if we change the representative points in each  $1/n$-chainable component, that is, if  we choose another point $y_i \in B^{\infty}_{d}(x_i, 1/n)$, $i\in I_{1/n}$, and we define the corresponding metric $\omega_{n,f}: X \times X\rightarrow [0,\infty)$ similarly to $\rho _{n,f}$ but with the new representative points, then we still have that $d(x,y)=\omega_{n,f} (x,y)$ whenever $d(x,y)<1/n$ or $\omega_{n,f} (x,y)<1/n$. So that, the three metrics $\rho_{n,f}$, $\omega_{n,f}$ and $d$ are uniformly locally identical on $X$. Moreover, the election of these points will be irrelevant as we can see along the paper.

\smallskip

Next we are going to see   how the  family of metrics $\{\rho_{n, f}: {n\in \Nset, f\in U^{n}_{d}(X,\Nset)}\}$ are good for characterizing Bourbaki-boundedness (Proposition \ref{characterization-bourbaki-bounded} below). This characterization was essentially given  by  Hejcman in \cite{hejcman}, but we will split this result by means of  two technical lemmas which  will be very useful later. In the first lemma we describe the $\rho_{n,f}$-bounded subsets in $X$ for  $n\in \Nset$ and $f\in U^{n}_{d}(X,\Nset)$ fixed. And in the second one we do the same but fixing only  $n\in \Nset$, and varying all the functions $f$ in $U^{n}_{d}(X,\Nset)$.

\smallskip

\begin{lemma} \label{bounded n,f 1} Let $(X,d)$ be a metric space,  $n\in \Nset$ and $f\in U_{d}^{n}(X,\Nset)$. Then $B\subset X$ is $\rho _{n,f}$-bounded if  and only if there exist $F\subset I_{1/n}$ with  $f(\{x_i :i\in F\})$  finite, and  $M\in \Nset$ satisfying that $$B\subset \bigcup _{i \in F} B^{M}_{d}(x_ i , 1/n).$$ In particular,  every subset of the form $\bigcup _{i \in F} B^{M}_{d}(x_ i, 1/n)$,  for $F\subset I_{1/n}$ with   $f(\{x_i :i\in F\})$ finite, and $M\in\Nset$, is  $\rho _{n,f}$-bounded.
\end{lemma}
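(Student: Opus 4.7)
The plan is to work directly from the piecewise definition of $\rho_{n,f}$, fixing a basepoint $x_{i_0}$ with $i_0\in I_{1/n}$. The crucial technical point is converting between the internal chain metric $d_{1/n}$ and the iterated-ball sets $B^{M}_{d}(x_i,1/n)$. For this I would first observe that the infimum defining $d_{1/n}(x,x_i)$ may be restricted to \emph{irreducible} $1/n$-chains, since removing a reducible vertex strictly shortens the chain by the triangle inequality while preserving the $1/n$-chain condition. For an irreducible chain $u_0,\dots,u_m$ with $m\geq 2$, summing the inequalities $d(u_{k-1},u_k)+d(u_k,u_{k+1})\geq 1/n$ for $k=1,\dots,m-1$ and comparing with twice the total length $L$ gives $L\geq (m-1)/(2n)$. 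Consequently a bound $d_{1/n}(x,x_i)\leq C$ forces the existence of a $1/n$-chain from $x_i$ to $x$ of length at most some $M\in\Nset$ depending only on $n$ and $C$, so $x\in B^{M}_{d}(x_i,1/n)$; conversely, $x\in B^{M}_{d}(x_i,1/n)$ trivially yields $d_{1/n}(x,x_i)\leq M/n$ from the defining chain.

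For the forward implication, suppose $B\subset B_{\rho_{n,f}}[x_{i_0},R]$ for some $R>0$, and let $F=\{i\in I_{1/n}:B\cap B^{\infty}_{d}(x_i,1/n)\neq\emptyset\}$. For any $i\in F$ with $i\neq i_0$ and any $x\in B\cap B^{\infty}_{d}(x_i,1/n)$, the piecewise formula gives
$$\rho_{n,f}(x,x_{i_0})=d_{1/n}(x,x_i)+f(x_i)+f(x_{i_0})\leq R,$$
so $f(x_i)\leq R-f(x_{i_0})$. Since $f$ takes values in $\Nset$, this shows that $f(\{x_i:i\in F\})$ is a bounded, hence finite, subset of $\Nset$. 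The same inequality (together with the direct estimate $d_{1/n}(x,x_{i_0})\leq R$ in the case $i=i_0$) yields a uniform bound $d_{1/n}(x,x_i)\leq R$ valid for every $x\in B$ and every $i\in F$, so the chain-length estimate of the previous paragraph produces a single $M\in\Nset$, independent of $x$ and $i$, with $B\subset\bigcup_{i\in F}B^{M}_{d}(x_i,1/n)$.

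For the backward implication, assume $B\subset\bigcup_{i\in F}B^{M}_{d}(x_i,1/n)$ with $L:=\max\{f(x_i):i\in F\}$ finite. For any $x\in B^{M}_{d}(x_i,1/n)$ one has $d_{1/n}(x,x_i)\leq M/n$, so the piecewise definition gives
$$\rho_{n,f}(x,x_{i_0})\leq\frac{M}{n}+L+f(x_{i_0}),$$
with the case $i=i_0$ requiring only the trivial bound $M/n$. Hence $B$ is $\rho_{n,f}$-bounded, which simultaneously settles the final ``in particular'' claim. The only genuine obstacle is the irreducible-chain length estimate; once it is available, both directions are a direct unpacking of the piecewise definition of $\rho_{n,f}$.
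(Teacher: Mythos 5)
Your proof is correct and follows essentially the same route as the paper's: both arguments hinge on reducing to irreducible $1/n$-chains, the length estimate $L\geq (m-1)/(2n)$ to pass from a $d_{1/n}$-bound to membership in some $B^{M}_{d}(x_i,1/n)$, and a direct unpacking of the piecewise definition of $\rho_{n,f}$ for the converse. The only cosmetic difference is that you bound $d_{1/n}(x,x_i)$ straight from the formula for $\rho_{n,f}(x,x_{i_0})$ rather than via the triangle inequality through $x_i$, which changes nothing of substance.
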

\begin{proof} Let $B\subset X$ be  $\rho _{n,f}$-bounded. Choose $i_0\in I_{1/n}$  and $R>0$ such that   $B\subset B_{\rho _{n,f}}(x_{i_0}, R)$.  From the definition of the metric $\rho _{n,f}$, it must exists  $F\subset I_{1/n}$ such that $f(\{x_i :i\in F\})\subset \Nset$ is finite and satisfying that $$B\subset \bigcup _{i \in F} B^{\infty}_{d}(x_ i , 1/n).$$  Take $K\in \Nset$ such that $f(x_i )\leq K$, for every $i\in F$. Now, if  $x\in B\cap B^{\infty}_{d}(x_ i , 1/n)$  we have that $$d_{1/n}(x, x_{i})=\rho_{n,f}(x, x_{i})\leq \rho_{n,f}(x, x_{i_0})+\rho_{n,f}(x_{i_0}, x_{i})<R+K+f(x_{i_0}),$$ and then there exists an irreducible $1/n$-chain in $B^{\infty}_{d}(x_ i, 1/n)$ joining $x$ and $x_{i}$, $x=u_0,u_1,...,u_m=x_{i}$ such that $\sum_{l=1}^{m} d(u_{l-1},u_{l})<R+K+f(x_{i_0})$.  Since the chain is irreducible, then if $m\geq 2$, every two consecutive sums satisfy $d(u_{l-1},u_{l})+d(u_{l},u_{l+1})\geq 1/n$, and then  $(1/n)(m-1)/2\leq R+K+f(x_{i_0})$. In particular, the length of every irreducible chain must be  less than $M$, where $M$ is a natural number with $M>2n(R+K+f(x_{i_0}))+1$. We finish, since we have that,
$$B\subset \bigcup_{i\in F}  B^{M}_{d}(x_ i , 1/n).$$

Conversely, let $M\in  \Nset$ and $F\subset I_{1/n}$ such that $f(\{x_i :i\in F\})$ is finite. We just need to prove that $\bigcup _{i \in F} B^{M}_{d}(x_ i , 1/n)$ is  $\rho _{n,f}$-bounded. So, take $K\in \Nset$ such that $f(x_i )\leq K$, for every $i\in F$. Fix some $j\in F$ and let $x\in  B^M_{d}(x_ i , 1/n)$, $i\in F$.  Then $$\rho _{n,f}(x, x_j)\leq \rho _{n,f}(x, x_i)+\rho _{n,f}(x_i,x_j)$$ $$\leq d_{1/n}(x, x_i)+f(x_i)+f(x_j)\leq M/n + 2K$$
that is, $x\in B_{\rho_{n,f}}(x_j, R)$, for $R>M/n + 2K$. We finish since $\bigcup _{i \in F} B^{M}_{d}(x_ i , 1/n)\subset  B_{\rho_{n,f}}(x_j, R)$.

\end{proof}

\begin{lemma} \label{bounded n,f 2} Let $(X,d)$ be a metric space and $n\in \Nset$. Then $B\subset X$ is $\rho _{n,f}$-bounded, for every $f\in U_{d}^{n}(X,\Nset)$, if and only if there exist a finite set $F\subset I_{1/n}$  and  $M\in \Nset$ satisfying that $$B\subset \bigcup _{i \in F} B^{M}_{d}(x_ i , 1/n).$$
\end{lemma}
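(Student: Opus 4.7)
The plan is to reduce the lemma to the preceding Lemma \ref{bounded n,f 1} by exploiting the freedom to choose the ``label'' function $f$ on the representative points. The $(\Leftarrow)$ direction is essentially immediate: if $F\subset I_{1/n}$ is \emph{finite} and $B\subset\bigcup_{i\in F}B^{M}_{d}(x_i,1/n)$, then for every $f\in U^{n}_{d}(X,\Nset)$ the image $f(\{x_i:i\in F\})$ is the image of a finite set, hence finite, so Lemma \ref{bounded n,f 1} immediately yields that $B$ is $\rho_{n,f}$-bounded.

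For $(\Rightarrow)$, I would define
\[
F\;=\;\{\,i\in I_{1/n}:B\cap B^{\infty}_{d}(x_i,1/n)\neq\emptyset\,\}
\]
and show first that $F$ is finite, and second that a common bound $M$ works. For the first claim I would argue by contradiction: if $F$ were infinite, pick an injective sequence $(i_k)_{k\in\Nset}\subset F$ and define $f:\{x_i:i\in I_{1/n}\}\to\Nset$ by $f(x_{i_k})=k$ and $f(x_j)=1$ for $j\notin\{i_k\}$. This $f$ belongs to $U^{n}_{d}(X,\Nset)$ by the very definition of that class. By hypothesis $B$ is $\rho_{n,f}$-bounded, so Lemma \ref{bounded n,f 1} supplies $F'\subset I_{1/n}$ with $f(\{x_i:i\in F'\})$ finite and $B\subset\bigcup_{i\in F'}B^{\infty}_{d}(x_i,1/n)$. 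Since the chainable components are pairwise disjoint, every $i\in F$ must lie in $F'$, so $\{k:k\in\Nset\}\subset f(\{x_i:i\in F'\})$, contradicting the finiteness of the latter.

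Once $F$ is known to be finite, I would invoke Lemma \ref{bounded n,f 1} once more, this time with the constant function $f\equiv 1\in U^{n}_{d}(X,\Nset)$, to obtain some $F''\subset I_{1/n}$ and $M\in\Nset$ with $B\subset\bigcup_{i\in F''}B^{M}_{d}(x_i,1/n)$. The definition of $F$ and the disjointness of the $1/n$-chainable components force every point of $B$ to lie in a component indexed by an element of $F\cap F''$, hence
\[
B\;\subset\;\bigcup_{i\in F\cap F''}B^{M}_{d}(x_i,1/n)\;\subset\;\bigcup_{i\in F}B^{M}_{d}(x_i,1/n),
\]
which finishes the proof.

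The only substantive step is the first, and the main (mild) obstacle there is producing the right witness $f$: one needs to be sure that an arbitrary assignment of values on the representative points really does define an element of $U^{n}_{d}(X,\Nset)$. This is precisely the content of the description of $U^{n}_{d}(X,\Nset)$ given just before Proposition \ref{gloria-result}, so the construction of $f$ is legitimate and the contradiction is immediate.
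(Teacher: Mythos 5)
Your proof is correct and follows essentially the same route as the paper: the forward direction is established by showing that $B$ can meet only finitely many $1/n$-chainable components, using a witness $f\in U^{n}_{d}(X,\Nset)$ that takes infinitely many values on the components met by $B$ to contradict $\rho_{n,f}$-boundedness, and then a single $M$ is extracted from Lemma \ref{bounded n,f 1}. Your write-up merely makes explicit (via the constant function $f\equiv 1$ and the intersection $F\cap F''$) a step the paper leaves implicit.
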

\begin{proof} According to the above Lemma \ref{bounded n,f 1}, only we need to prove that if  $B$ is  $\rho _{n,f}$-bounded for every $f\in U_{d}^{n}(X,\Nset)$, then a finite set $F$ in $I_{1/n}$ can be taken satisfying the statement. Indeed,  note that  $B$ only meets finitely many $1/n$-chainable components, otherwise we can choose some  $f\in U^{n}_{d}(X,\Nset)$ such that $f(B)$ is an infinite subset of $\Nset$, and then (by the definition of $\rho_{n,f}$) $B$ will be not $\rho _{n,f}$-bounded, which is  a contradiction. Therefore we finish by  taking $F$ the finite subset of $I_{1/n}$ of these $1/n$-chainable components. \end{proof}

Now we are ready to establish the announced characterization  of Bourbaki-boundedness by means of the family of metrics $\{\rho_{n,f}: {n\in \Nset, f\in U^{n}_{d}(X,\Nset)}\}$. Note that equivalences of (1) and (2) in the next two propositions were given by Hejcman in \cite{hejcman}.

\begin{proposition} \label{characterization-bourbaki-bounded} Let $(X,d)$ be a metric space and $B\subset X$. The following are equivalent:
\begin{enumerate}
\item $B$ is Bourbaki-bounded in $X$.

\item $B$ is $\rho$-bounded for every uniformly equivalent metric $\rho$.

\item $B$ is $\rho$-bounded for every Lipschitz in the small equivalent metric $\rho$.

\item $B$ is $\rho _{n,f}$-bounded for every  $n\in \Nset$ and $f\in U^{n}_{d}(X,\Nset)$.
\end{enumerate}
\end{proposition}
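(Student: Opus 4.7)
My plan is to prove the cycle $(1) \Rightarrow (2) \Rightarrow (3) \Rightarrow (4) \Rightarrow (1)$, with the content concentrated in the last implication, where Lemma \ref{bounded n,f 2} does essentially all the work.

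For $(1) \Rightarrow (2)$, I would invoke the fact that Bourbaki-boundedness is preserved by uniformly continuous maps (a standard property, essentially due to Hejcman): if $\rho$ is uniformly equivalent to $d$ then the identity $id:(X,d)\to (X,\rho)$ is uniformly continuous, so $B$ is Bourbaki-bounded in $(X,\rho)$ and in particular $\rho$-bounded. The implication $(2) \Rightarrow (3)$ is a triviality, since Lipschitz in the small equivalence implies uniform equivalence (a Lipschitz-in-the-small map is automatically uniformly continuous). For $(3) \Rightarrow (4)$, I would simply recall the observation already made in the paragraph constructing $\rho_{n,f}$: these metrics are uniformly locally identical with $d$, hence in particular Lipschitz-in-the-small equivalent to $d$, so (3) applies with $\rho = \rho_{n,f}$.

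The substantive step is $(4) \Rightarrow (1)$. Fix $\varepsilon > 0$ and choose $n\in\Nset$ with $1/n < \varepsilon$. By hypothesis $B$ is $\rho_{n,f}$-bounded for \emph{every} $f \in U^n_d(X,\Nset)$, so Lemma \ref{bounded n,f 2} applies and yields a finite set $F\subset I_{1/n}$ and some $M\in\Nset$ such that
\[
B \subset \bigcup_{i\in F} B^{M}_{d}(x_i, 1/n).
\]
Since $1/n < \varepsilon$, one has $B^{M}_{d}(x_i,1/n) \subset B^{M}_{d}(x_i,\varepsilon)$, so $B$ is covered by finitely many $M$-th enlargements of $\varepsilon$-balls centred at points of $X$, which is exactly the definition of Bourbaki-boundedness.

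The only step that requires care is $(1) \Rightarrow (2)$, where one has to cite (or briefly justify) the uniform invariance of Bourbaki-boundedness; the rest is essentially bookkeeping organized around the two technical lemmas already proved. Since Lemma \ref{bounded n,f 2} carries the entire combinatorial content, there is no real obstacle beyond assembling the pieces.
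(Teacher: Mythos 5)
Your proposal is correct and follows essentially the same route as the paper: the paper likewise dismisses the ordered implications $(1)\Rightarrow(2)\Rightarrow(3)\Rightarrow(4)$ as immediate (for the same reasons you spell out: uniform invariance of Bourbaki-boundedness, and the fact that the metrics $\rho_{n,f}$ are Lipschitz in the small equivalent to $d$) and concentrates the work in $(4)\Rightarrow(1)$ via Lemma \ref{bounded n,f 2}, exactly as you do. Your extra detail on the easy implications is harmless and accurate.
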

\begin{proof} All the (ordered) implications follow at once, except $(4)\Rightarrow (1)$. So, suppose $B$ satisfies $(4)$ and let $\varepsilon>0$. Take $n\in \Nset$, such that $1/n<\varepsilon$. By the previous Lemma \ref{bounded n,f 2} we have that, for some finite set $F\subset I_{1/n}$ and $M\in \Nset$,
$$B\subset  \bigcup _{i \in F} B^{M}_{d}(x_ i , 1/n)\subset\bigcup _{i \in F} B^{M}_{d}(x_ i, \varepsilon)$$
and then, $B$ is Bourbaki-bounded.
\end{proof}

We finish this section with an analogous result to the above Proposition \ref{gloria-result}, for Bourbaki-bounded subsets. It can be also seen in \cite{beer-garrido1} but with a different proof. Recall that $LS_d(X)$ denoted the family of all the real-valued functions on $X$ that are Lipschitz in the small.

\begin{proposition} \label{Bourbaki-bounded versus uniform functions} Let $(X,d)$ be a metric space and $B\subset X$. The following are equivalent:
\begin{enumerate}
\item $B$ is Bourbaki-bounded in $X$.

\item Every function $f\in U_{d}(X)$ is bounded on $B$.

\item Every function $f\in LS_{d}(X)$ is bounded on $B$.
\end{enumerate}
\begin{proof}$(1)\Rightarrow (2)$ If $f\in U_{d}(X)$ is not bounded on $B$, then the metric $\rho(x,y)=d(x,y)+|f(x)-f(y)|$ is uniformly equivalent to $d$ but $B$ is not $\rho$-bounded. By Proposition \ref{characterization-bourbaki-bounded},  $B$ is not Bourbaki-bounded.

$(2)\Rightarrow (3)$ is clear.

$(3)\Rightarrow (1)$ If $B$ is not Bourbaki-bounded then, again by Proposition \ref{characterization-bourbaki-bounded}, there exists some metric $\rho$ Lipschitz in the small equivalent to $d$ such that $B$ is not $\rho$-bounded. Then, taking some $x_0\in X$ and the function $f(x)=\rho(x,x_0)$, $x\in X$, we have that $f\in LS_{d}(X)$ is not bounded in  $B$.
\end{proof}
\end{proposition}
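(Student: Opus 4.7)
The plan is to cycle $(1)\Rightarrow(2)\Rightarrow(3)\Rightarrow(1)$, leaning heavily on Proposition \ref{characterization-bourbaki-bounded}, which already identifies Bourbaki-boundedness of $B$ with $\rho$-boundedness of $B$ for every uniformly equivalent (equivalently, Lipschitz in the small equivalent) metric $\rho$. So the whole problem reduces to producing, from a non-Bourbaki-bounded $B$, an unbounded real-valued function of the correct regularity, and conversely showing that unbounded uniformly continuous functions force the existence of a metric witnessing the failure of condition (4) in that proposition.

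For $(1)\Rightarrow(2)$, I would argue by contrapositive. Given $f\in U_d(X)$ unbounded on $B$, define
\[
\rho(x,y)=d(x,y)+|f(x)-f(y)|.
\]
This $\rho$ is a metric, trivially $\rho\ge d$, and uniform equivalence with $d$ follows from $f\in U_d(X)$: a $d$-$\delta$ making $|f(x)-f(y)|<\varepsilon/2$ also makes $\rho(x,y)<\varepsilon$ whenever $d(x,y)<\min(\delta,\varepsilon/2)$. Since $f$ is unbounded on $B$, fixing $x_0\in X$ we get that $\rho(\,\cdot\,,x_0)$ is unbounded on $B$, so $B$ is not $\rho$-bounded. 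Proposition \ref{characterization-bourbaki-bounded}((1)$\Leftrightarrow$(2)) then forces $B$ not to be Bourbaki-bounded.

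The implication $(2)\Rightarrow(3)$ is immediate since every Lipschitz-in-the-small function is uniformly continuous (the $K$-Lipschitz bound on $d$-balls of some fixed radius $\delta$ gives the uniform continuity modulus at once).

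For $(3)\Rightarrow(1)$, again contrapositive: assuming $B$ is not Bourbaki-bounded, Proposition \ref{characterization-bourbaki-bounded}((1)$\Leftrightarrow$(3)) yields a metric $\rho$ that is Lipschitz in the small equivalent to $d$ such that $B$ is not $\rho$-bounded. Pick any $x_0\in X$ and set $f(x)=\rho(x,x_0)$. Then $f$ is $1$-Lipschitz with respect to $\rho$, and since $id:(X,d)\to(X,\rho)$ is Lipschitz in the small, composing with $f$ shows $f\in LS_d(X)$. By choice of $\rho$, $f$ is unbounded on $B$, contradicting (3).

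There is no serious obstacle once Proposition \ref{characterization-bourbaki-bounded} is in hand; the only step that required a non-obvious construction is the definition of $\rho=d+|f(\cdot)-f(\cdot)|$ in $(1)\Rightarrow(2)$, and that is a standard metric-modification trick. The whole proof is really a dictionary between \emph{boundedness of functions} and \emph{boundedness of sets in equivalent metrics}, with the distance-to-a-point function as the bridge in both directions.
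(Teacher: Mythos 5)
Your proposal is correct and follows essentially the same route as the paper: the contrapositive of $(1)\Rightarrow(2)$ via the metric $\rho=d+|f(\cdot)-f(\cdot)|$ and Proposition \ref{characterization-bourbaki-bounded}, the trivial $(2)\Rightarrow(3)$, and the contrapositive of $(3)\Rightarrow(1)$ via a Lipschitz-in-the-small equivalent metric and the distance function $\rho(\cdot,x_0)$. The only difference is that you spell out a few verifications (uniform equivalence of $\rho$, membership of $f$ in $LS_d(X)$) that the paper leaves implicit.
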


\smallskip

\section{The Samuel Realcompactification}

\smallskip

This section is devoted to study the so-called  Samuel realcompactification $H(U_d(X))$ of a metric space $(X,d)$. That is, the smallest realcompactification of $X$ with the property that every real-valued uniformly continuous function can be  continuously extended to it.
Note that this realcompactification can be considered as the completion of the so-called {\it c-modification} of $X$, i.e., the space $X$ endowed with the weak uniformity generated by the real-valued uniformly continuous functions (see \cite{husek}).

\smallskip

We know that this realcompactification is contained in the corresponding Samuel compactification, and more precisely we have that,
$$H(U_d(X))=\big\{\xi\in s_d X: \, f^*(\xi)\neq \infty \text{\,\, for all \,} f\in U_d(X)\big\}.$$
Moreover, according to the above sections, we have the  following: $$ X\subset H(U_d(X))\subset s_d X$$

\noindent where the reverse inclusions are not necessarily true, as the next examples show.
\smallskip

\begin{enumerate}

\item [1.] Let $X$ be the closed unit ball of an infinite dimensional Banach space. Since every uniformly continuous function on $X$ is bounded, then $H(U_d(X))=H(U^*_d(X))=s_d X$. But, as $X$ is not compact, then $X\neq H(U_d(X))$. Moreover if $X$ has non-measurable cardinal then we know that $X$ is a realcompact space, and that means that $X=\upsilon X \neq H(U_d(X))$. Then, this example illustrates how the Samuel  and the Hewitt-Nachbin realcompactifications may differ.
\smallskip

\item [2.] Since the real line $\Rset$ with the usual metric admits unbounded real-valued uniformly continuous functions, then $H(U_d(\Rset))$ can not be compact, and then $H(U_d(\Rset))\neq s_d \Rset$.
\end{enumerate}

\smallskip

The problem of when the Samuel realcompactification and the Samuel compactification of a metric space coincides  has the following easy answer (compare with Proposition \ref{Lipschitz realcompact. = Samuel compact.}).

\begin{proposition} \label{Samuel realcompactification = Samuel compactification} Let $(X,d)$ a metric space. The following are equivalent:
\begin{enumerate}
\item $H(U_{d}(X))=s_d X$.
\item $U_d(X)=U^*_d(X)$
\item $X$ is Bourbaki-bounded.
\end{enumerate}
\end{proposition}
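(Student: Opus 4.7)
The plan is to derive the proposition directly from two tools already established in the paper: the general description $H(\mathcal{L}) = \{\xi \in H(\mathcal{L}^*) : f^*(\xi) \neq \infty \text{ for all } f\in \mathcal{L}\}$ from the preliminaries (applied with $\mathcal{L}=U_d(X)$, so $H(\mathcal{L}^*)=s_dX$), and the characterization of Bourbaki-boundedness through boundedness of uniformly continuous functions (Proposition \ref{Bourbaki-bounded versus uniform functions}). No new machinery is required; the argument mirrors the one used for the analogous Lipschitz statement in Proposition \ref{Lipschitz realcompact. = Samuel compact.}.

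First I would establish $(2)\Leftrightarrow(3)$. This is essentially the case $B=X$ of Proposition \ref{Bourbaki-bounded versus uniform functions}: that result says $B\subset X$ is Bourbaki-bounded iff every $f\in U_d(X)$ is bounded on $B$. Specializing to $B=X$ yields precisely that $X$ is Bourbaki-bounded if and only if every real-valued uniformly continuous function on $X$ is bounded, i.e., $U_d(X)=U^*_d(X)$.

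Next I would prove $(1)\Leftrightarrow(2)$. For $(2)\Rightarrow(1)$, if every $f\in U_d(X)$ is bounded, then each $f$ admits a continuous extension to the compact space $s_dX$ with values in $\mathbb{R}$, so $f^*(\xi)\neq \infty$ for every $\xi\in s_dX$, and the description of $H(U_d(X))$ as a subset of $s_dX$ forces $H(U_d(X))=s_dX$. For $(1)\Rightarrow(2)$, assume for contradiction that some $f\in U_d(X)$ is unbounded, and pick a sequence $(x_n)\subset X$ with $|f(x_n)|\to \infty$. By compactness of $s_dX$ the sequence has a cluster point $\xi\in s_dX$, and by continuity of the extension $f^*\colon s_dX\to \mathbb{R}\cup\{\infty\}$ we obtain $f^*(\xi)=\infty$. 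Hence $\xi\in s_dX\setminus H(U_d(X))$, contradicting $(1)$.

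There is really no substantive obstacle here: the proof is a one-line reduction in each direction, exactly as signaled by the earlier remark that the analogous Lipschitz version has an immediate proof. The only small point requiring care is the use of the extension to $\mathbb{R}\cup\{\infty\}$, which is already guaranteed by the general framework recalled in the Preliminaries.
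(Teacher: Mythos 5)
Your proof is correct and is essentially the argument the paper intends: the proposition is stated without proof as an ``easy answer,'' and the natural route is exactly the one you take, namely $(2)\Leftrightarrow(3)$ by specializing Proposition \ref{Bourbaki-bounded versus uniform functions} to $B=X$, and $(1)\Leftrightarrow(2)$ via the description $H(U_d(X))=\{\xi\in s_dX:\, f^*(\xi)\neq\infty \text{ for all } f\in U_d(X)\}$. All steps check out, including the cluster-point argument for $(1)\Rightarrow(2)$.
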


On the other hand,  to know  when $H(U_d(X))$ is just $X$, i.e., when $X$ is what we will call {\it Samuel realcompact} is not so easy as in the case of Lipschitz-realcompactness and it deserves to be studied in a separate section. So,  we will devoted Section 5 to this end.

\smallskip

Concerning the relationship between  $H(Lip_d(X))$ and $H(U_d(X))$ we can say that they are always comparable  realcompactifications of   $X$. Indeed, it is clear that $H(U_d(X))\geq H(Lip_d(X))$. And therefore, since they live in the same compactification $s_dX$, we have that this relation turns into $$H(U_d(X))\subset H(Lip_d(X)).$$
Next example shows that  the reverse inclusion does not occur.

\begin{example} Let $X$ be an infinite space endowed  with the $0-1$ discrete metric $d$. Then $Lip_{d}(X)=C^{*}(X)$ and  $U_{d}(X)=C(X)$. Therefore  $H(Lip_{d}(X))=\beta X$ and $H(U_{d}(X))=\upsilon X$. Now, since $X$ is  not compact, then we have that $H(U_d(X))\neq H(Lip_d(X))$.
\end{example}

\smallskip

Next, if we take another metric $\rho$ uniformly equivalent to $d$,  we have that $$H(U_d(X))=H(U_\rho(X))\subset H(Lip_\rho(X)).$$ That is, the Samuel realcompactification of $(X,d)$ is  contained in every Lipschitz realcompactification of $X$ given by a uniform equivalent metric.  Surprisingly, next result says  that $H(U_d(X))$ is in fact the supremum, with the usual order in the family of  the realcompactifications, of all these Lipschitz realcompactifications. Note that the supremum  of a family of realcompactifications of $X$ which are contained in the same space, always exists, and it is actually nothing more than the intersection of all of them (which is also a realcompactification of $X$).

\begin{theorem} \label{Samuel as supremum Lipschitz} Let $(X,d)$ be a metric space. Denoting ``uniformly equivalent'' by  $\stackrel{{u}}{\sim}$, then
$$H(U_d (X))=\bigvee_{\rho\stackrel{{u}}{\sim} d} H(Lip_\rho(X))=\bigcap_{\rho\stackrel{{u}}{\sim} d} H(Lip_\rho(X)).$$

\begin{proof} As we have previously pointed out, $H(U_d(X))=H(U_\rho(X))\subset H(Lip_\rho(X))$, for every metric $\rho$ uniformly equivalent to $d$. And then that $H(U_d (X))$ is contained in $\bigcap_{\rho\stackrel{{u}}{\sim} d} H(Lip_\rho(X)$ is clear.

For the reverse inclusion, let $f\in U_{d}(X)$ and $\rho$ the metric defined as $\rho(x,y)=d(x,y)+|f(x)-f(y)|$, $x,y\in X$. Then $\rho$ is uniformly equivalent to $d$ and clearly  $f\in Lip_{\rho}(X)$. Since  $f$ can be continuously extended to $H(Lip_\rho(X))$ then it can be also extended to the realcompactification $\bigcap_{\rho\stackrel{{u}}{\sim} d} H(Lip_\rho(X))$. Finally,  as $H(U_d(X))$ is the smallest realcompactification with this property, then it follows that $\bigcap_{\rho\stackrel{{u}}{\sim} d} H(Lip_\rho(X))\subset H(U_d(X))$, as we wanted.\end{proof}
\end{theorem}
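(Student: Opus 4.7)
The plan is to exploit the fact that every realcompactification involved sits inside the common Samuel compactification $s_dX$, since $s_\rho X$ is equivalent to $s_dX$ whenever $\rho\stackrel{{u}}{\sim} d$ (uniformly equivalent metrics yield the same bounded real-valued uniformly continuous functions). This immediately gives the equality of the two right-hand sides: the supremum of any family of realcompactifications of $X$ contained in a fixed compactification is just their intersection, since this intersection is itself a closed subspace of $s_dX$ containing $X$ densely and is realcompact as an intersection of realcompact subspaces of the Tychonoff space $s_dX$.

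For the easy inclusion $H(U_d(X))\subseteq\bigcap_{\rho\stackrel{{u}}{\sim} d} H(Lip_\rho(X))$, I would combine two observations: first, uniform equivalence gives $U_\rho(X)=U_d(X)$ and therefore $H(U_d(X))=H(U_\rho(X))$; second, $Lip_\rho(X)\subseteq U_\rho(X)$, so the smallest realcompactification extending the larger family dominates the one extending the smaller family, yielding $H(U_\rho(X))\subseteq H(Lip_\rho(X))$. Intersecting over all $\rho$ uniformly equivalent to $d$ produces the claimed inclusion.

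The heart of the argument is the reverse inclusion, and for this I would use the graph-metric trick already employed elsewhere in the paper. Given any $f\in U_d(X)$, set
$$\rho_f(x,y)=d(x,y)+|f(x)-f(y)|.$$
Then $\rho_f$ is a metric on $X$ that is uniformly equivalent to $d$: the inequality $\rho_f\geq d$ is immediate, while uniform continuity of $f$ with respect to $d$ controls the second term and shows that the identity $(X,d)\to(X,\rho_f)$ is uniformly continuous as well. By construction $f$ is $1$-Lipschitz with respect to $\rho_f$, hence $f\in Lip_{\rho_f}(X)$ extends continuously to $H(Lip_{\rho_f}(X))$, and restriction yields a continuous extension of $f$ to the smaller space $\bigcap_{\rho\stackrel{{u}}{\sim} d} H(Lip_\rho(X))$. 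Since this works for every $f\in U_d(X)$, this intersection is a realcompactification of $X$ to which every real-valued uniformly continuous function on $X$ extends continuously; by the minimality characterization of $H(U_d(X))$, we obtain $\bigcap_{\rho\stackrel{{u}}{\sim} d} H(Lip_\rho(X))\subseteq H(U_d(X))$.

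The only delicate point I anticipate is checking that the intersection really is a realcompactification of $X$, so that the minimality of $H(U_d(X))$ is legitimately applicable. This boils down to noting that $X$ embeds densely in each $H(Lip_\rho(X))$ (hence in the intersection), and that an arbitrary intersection of realcompact subspaces of a Tychonoff space is realcompact; the coherence of the continuous extensions across different $\rho$'s is automatic, since any two of them must agree on the dense set $X$.
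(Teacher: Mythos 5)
Your proposal is correct and follows essentially the same route as the paper's proof: the same easy inclusion via $H(U_d(X))=H(U_\rho(X))\subset H(Lip_\rho(X))$, and the same graph-metric $\rho_f(x,y)=d(x,y)+|f(x)-f(y)|$ for the reverse inclusion, concluded by the minimality of $H(U_d(X))$. The point you flag as delicate (that the intersection is itself a realcompactification, equal to the supremum) is exactly what the paper records in the paragraph preceding the theorem, so nothing is missing.
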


As a consequence of the precedent theorem, we can see when  the Samuel and the Lipschitz realcompactifications coincide for a metric space.

\begin{proposition} \label{Samuel real. = Lipschitz real.} Let $(X,d)$ a metric space. Then $H(U_d(X))=H(Lip_d(X))$ if and only if every real-valued uniformly continuous function on $X$ is bounded on every $d$-bounded set.
\begin{proof} Suppose $H(U_d(X))=H(Lip_d(X))$, and take $x_0\in X$. If $B$ is a $d$-bounded subset of $X$, then there exists some $N\in \Nset$, such that $B\subset B_d[x_0, N]$. Now if $f\in U_d(X)$, then $f$ admits continuous extension to $H(U_d(X))=H(Lip_d(X))=\bigcup_{n\in \Nset} {\rm cl}_{s_d X}B_d[x_{0}, n]$ (Proposition \ref{characterization-H(Lip_d(X))}). Therefore the extension function $f^*$ must be bounded on the compact space ${\rm cl}_{s_d X}B_{d}[x_{0}, N]$, and this means in particular that $f$ is bounded on $B$.

Conversely,  from the above Theorem \ref{Samuel as supremum Lipschitz},  it is enough to check that $H(Lip_d(X))\subset H(Lip_\rho(X))$, for any $\rho$ uniformly equivalent to $d$. But, this is equivalently to see, according to Proposition \ref{comparation of Lipschitz-realcomp.}, that every $d$-bounded set is $\rho$-bounded. So, let $B$ be a $d$-bounded set in $X$, since the function $f(\cdot)=\rho(\cdot, x_0)$ is uniformly continuous, then from the hypothesis we have that $f$  must be bounded on $B$, and this means clearly that $B$ is $\rho$-bounded, as we wanted.
\end{proof}
\end{proposition}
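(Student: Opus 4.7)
The plan is to prove the two implications separately, leaning heavily on the explicit descriptions of $H(Lip_d(X))$ and $H(U_d(X))$ established earlier.

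For the forward direction, I would assume $H(U_d(X))=H(Lip_d(X))$ and let $B\subset X$ be a $d$-bounded set, so that $B\subset B_d[x_0,N]$ for some fixed $x_0\in X$ and some $N\in\Nset$. Invoking Proposition \ref{characterization-H(Lip_d(X))}, I know $H(Lip_d(X))=\bigcup_{n\in\Nset}\mathrm{cl}_{s_dX}B_d[x_0,n]$, so in particular $B\subset \mathrm{cl}_{s_dX}B_d[x_0,N]$, which is compact. Any $f\in U_d(X)$ extends continuously to $H(U_d(X))$, which by the assumed equality equals $H(Lip_d(X))$ and therefore contains this compact set. The extension $f^*$ is then bounded on $\mathrm{cl}_{s_dX}B_d[x_0,N]$, and so $f$ is bounded on $B$.

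For the converse, I would use Theorem \ref{Samuel as supremum Lipschitz}, which asserts that $H(U_d(X))=\bigcap_{\rho\stackrel{u}{\sim}d}H(Lip_\rho(X))$. Since we always have $H(U_d(X))\subset H(Lip_d(X))$, it is enough to show the reverse inclusion, and this reduces, via Proposition \ref{comparation of Lipschitz-realcomp.}, to checking that every $d$-bounded set is $\rho$-bounded, for each metric $\rho$ uniformly equivalent to $d$. Given such a $\rho$ and a $d$-bounded set $B$, I would consider the function $f(\cdot)=\rho(\cdot,x_0)$, which is $1$-Lipschitz with respect to $\rho$ and therefore uniformly continuous on $(X,d)$ by the uniform equivalence. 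The hypothesis applied to $f$ gives that $f$ is bounded on $B$, that is, $B$ is $\rho$-bounded.

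There is no real obstacle here beyond orchestrating the earlier machinery: the compact representation of $H(Lip_d(X))$ handles the forward direction, and the supremum representation of $H(U_d(X))$ together with the boundedness criterion for comparing Lipschitz realcompactifications handles the reverse. The only place one must be careful is in checking that the function $f(\cdot)=\rho(\cdot,x_0)$ is indeed $U_d(X)$, but this is immediate from uniform equivalence.
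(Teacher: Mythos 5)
Your proposal is correct and follows essentially the same route as the paper: the forward direction uses the decomposition $H(Lip_d(X))=\bigcup_{n}\mathrm{cl}_{s_dX}B_d[x_0,n]$ from Proposition \ref{characterization-H(Lip_d(X))} to get boundedness of $f^*$ on a compact piece, and the converse combines Theorem \ref{Samuel as supremum Lipschitz} with the comparison criterion of Proposition \ref{comparation of Lipschitz-realcomp.} and the test function $f(\cdot)=\rho(\cdot,x_0)$. No substantive differences from the paper's argument.
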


\smallskip

It is clear that last result can be reformulated in terms of Bourbaki-bounded subsets. Indeed,  from Proposition \ref{characterization-bourbaki-bounded}, we can say that the Samuel  and the Lipschitz realcompactifications of the metric space $(X,d)$ are equivalent if and only if every bounded set in $X$ is Bourbaki-bounded, or equivalently, in terms of bornologies, when the bornology of the bounded sets, denoted by $\textbf{B}_{d}(X)$,  and the bornology of the Bourbaki-bounded sets, denoted by $\textbf{BB}_{d}(X)$,  coincide. Recall that, as we have said in the Introduction,  a family of subsets of $X$ is a {\it bornology} whenever they forms a cover of $X$, closed by finite unions, and stable by subsets.

\smallskip

Thus,  we can describe  the Lipschitz and the Samuel realcompactifications in terms  of bornologies. Indeed,  Proposition \ref{characterization-H(Lip_d(X))} and Theorem \ref{Samuel as supremum Lipschitz}  can be written respectively as follows, $$H(Lip_d(X))=\bigcup_{B\in \textbf{B}_{d}(X)} {\rm cl}_{s_d X}B  \text{\,\,\,\,\, and \,\,\,\,\,} H(U_d(X))=\bigcap_{\rho\stackrel{{u}}{\sim} d} \,\, \bigcup_{B\in \textbf{B}_{\rho}(X)} {\rm cl}_{s_d X} B.$$

Note that, according to the characterization of Bourbaki-bounded subsets given in Proposition \ref{characterization-bourbaki-bounded}, it is natural to wonder if in the last equality  we can replace  ``$\bigcap_{\rho\stackrel{{u}}{\sim} d} \,\, \bigcup_{B\in \textbf{B}_{\rho}(X)}$'' by just the symbol ``$\bigcup_{B\in \textbf{BB}_{d}(X)}$''. That is, we wonder if it is true that $H(U_d(X))= \bigcup_{B\in \textbf{BB}_{d}(X)} {\rm cl}_{s_d X}  B$.  On one hand, it is clear that if $\xi\in {\rm cl}_{s_d X} B$, for some Bourbaki-bounded subset $B$, then $\xi\in H(U_d(X))$. That is
$$H(U_d(X))\supset \bigcup_{B\in \textbf{BB}_{d}(X)} {\rm cl}_{s_d X}  B.$$
But, unfortunately, the reverse inclusion is not true. For instance, if $X$ is a set with a measurable cardinal endowed with the $0-1$ metric then the  Bourbaki-bounded subsets are only the finite ones. And then $H(U_d(X))=H(C(X))=\upsilon X\neq X=\bigcup_{B\in \textbf{BB}_{d}(X)} {\rm cl}_{s_d X} B.$

\smallskip

Last example shows that a  description of the Samuel realcompactification as the above obtained for $H(Lip_d(X))$  by means of the $d$-bounded sets in $X$, that are precisely those sets where the Lispchitz functions are bounded, seems to be not true. In other words, we do not know whether $H(U_d(X))$ is  what we could call  a {\it (uniformly) bornological  realcompactification}. Recall that in the setting of topological Tychonoff spaces, the notion of bornological realcompactification was given by  Vroegrijk in \cite{vroegrijk}. There, he   study those realcompactifications of a topological space $X$,  which are contained in $\beta X$, and  that in some sense  are given by bornologies associated to families of continuous functions.  We think that it would be very interesting to make an analogous study in the setting of metric spaces together with bornologies defined by families of uniformly continuous functions. But, we will not do this in the present  paper, nevertheless we  will devote next Section 7 to see that by relating bornologies  and realcompactifications we can obtain  a deeper knowledge of some of our main objectives here.

\smallskip

 We finish this section giving another description of the  space $H(U_d(X))$ that will be useful later. This description  is in  the line of Theorem \ref{Samuel as supremum Lipschitz} but with less uniformly equivalent metrics, namely with  the family of  metrics before  defined  $\{\rho_{n,f}\}$. Recall  that (as we have said before) two metrics $d$ and  $\rho$ are Lipschitz in the small equivalent whenever the identity maps $id:(X,d)\rightarrow (X,\rho)$ and $id:(X,\rho)\rightarrow (X,d)$ are Lipschitz in the small.

\begin{theorem}  \label{Samuel as supremum Lipschitz-2} Let $(X,d)$ be a metric space. Denoting  ``Lipschitz in the small  equivalent'' by $\,\stackrel{{LS}}{\sim}$, then
$$H(U_d (X))=\bigcap  \big \{H(Lip_\rho(X)): \rho \stackrel{{LS}}{\sim} d\big \}=\bigcap \big\{ H(Lip_{\rho_{n,f}}(X)): {n\in \Nset,\, f\in U_d^n(X,\Nset)}\big\}.$$
\begin{proof} According to all the precedent study, it is only necessary to check that the last space is contained in the first one. Indeed, let $\xi \in \bigcap \big\{ H(Lip_{\rho_{n,f}}(X)): {n\in \Nset,\, f\in U_d^n(X,\Nset)}\big\}$. In order to prove that $\xi$ belongs to $H(U_d (X))$ we are going to see  that for every $g\in U_d(X)$ we have that $g^*(\xi)\neq\infty$.  Firstly recall that the family of Lipschitz in the small functions are uniformly dense in $U_d(X)$ (see \cite{garrido2}). Then we can suppose that $g\in LS_d(X)$ and, without loss of generality,  that $g\geq 1$. Thus, let  $K>0$ and $n\in\Nset$ be, such that $|g(x)-g(y)|\leq K\cdot d(x,y)$, whenever $d(x,y) <1/n.$

Take the metric $\rho_{n,f}$ with $f:X\to \Nset$ defined   by $f(x)= [g(x_i)] +1$ (where $[t]$ denotes the integer part of  $t$),  for $x\in B^{\infty}_{d}(x_i, 1/n)$ and $i \in I_{1/n}$. Since $\xi\in H(Lip_{\rho_{n,f}} (X))$ then $\xi\in {\rm cl}_{s_dX} B$, for some $\rho_{n,f}$-bounded subset $B$ of $X$ (Proposition \ref{characterization-H(Lip_d(X))}). Now, from Lemma \ref{bounded n,f 1},  there exist $F\subset I_{1/n}$ with  $f(\{x_i :i\in F\})$  finite, and  $M\in \Nset$ satisfying that $$\xi \in {\rm cl}_{s_dX}  \bigcup _{i \in F} B^{M}_{d}(x_ i , 1/n).$$
Finally, it is enough to check that $g$ is bounded on $\bigcup _{i \in F} B^{M}_{d}(x_ i , 1/n)$. Indeed, let $L\in \Nset$ such that $f(x_i)\leq L$, for $i\in F$. Next, fix $j\in F$ and let $x\in B^{M}_{d}(x_ i , 1/n)$ for some $i\in F$. Then,
$$|g(x)-g(x_{j} )|\leq |g(x)-g(x_{i} )|+|g(x_{i} )-g(x_{j} )|\leq K\cdot M/n +2L.$$
Therefore,  $g$ is bounded on  $\bigcup_{i\in F} B^{m}_{d}(x_{i} ,1/n)$, and then $g^*(\xi)\neq \infty$, as we wanted.
\end{proof}
\end{theorem}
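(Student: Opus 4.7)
The plan is to prove three containments in a cycle, with the inclusions $H(U_d(X)) \subset \bigcap\{H(Lip_\rho(X)):\rho\stackrel{LS}{\sim}d\} \subset \bigcap\{H(Lip_{\rho_{n,f}}(X))\}$ being the easy ones, and closing the loop being the main work. Since every Lipschitz-in-the-small equivalence is in particular a uniform equivalence, Theorem \ref{Samuel as supremum Lipschitz} immediately gives the first inclusion. Since each $\rho_{n,f}$ is Lipschitz-in-the-small equivalent to $d$ (as established in Section 4), the family indexed by $(n,f)$ is a subfamily of the one indexed by $\stackrel{LS}{\sim}$, yielding the second inclusion (an intersection over a smaller family is larger).

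The core of the argument is to show that if $\xi$ belongs to $H(Lip_{\rho_{n,f}}(X))$ for every $n\in\Nset$ and every $f\in U_d^n(X,\Nset)$, then $\xi\in H(U_d(X))$. Using the description $H(U_d(X))=\{\xi\in s_dX : g^*(\xi)\neq\infty \text{ for all } g\in U_d(X)\}$, I have to show $g^*(\xi)\neq\infty$ for each $g\in U_d(X)$. First I would reduce to the case $g\in LS_d(X)$: since $LS_d(X)$ is uniformly dense in $U_d(X)$ (Garrido--Jaramillo), any $g\in U_d(X)$ differs from some $h\in LS_d(X)$ by a bounded function, and $h^*(\xi)\neq\infty$ forces $g^*(\xi)\neq\infty$. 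Also, adding a constant, I may assume $g\geq 1$.

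Having $g\in LS_d(X)$ with $g\geq 1$, pick $K>0$ and $n\in\Nset$ such that $|g(x)-g(y)|\leq K\cdot d(x,y)$ whenever $d(x,y)<1/n$. The key construction is the function $f\colon X\to\Nset$ defined by $f(x)=[g(x_i)]+1$ for $x\in B_d^\infty(x_i,1/n)$, where $\{x_i\}_{i\in I_{1/n}}$ are the fixed representatives of the $1/n$-chainable components. By construction $f$ is constant on each $1/n$-component, so $f\in U_d^n(X,\Nset)$. Applying the hypothesis with this pair $(n,f)$, $\xi\in H(Lip_{\rho_{n,f}}(X))$; by Proposition \ref{characterization-H(Lip_d(X))} and Lemma \ref{bounded n,f 1}, there exist $F\subset I_{1/n}$ with $f(\{x_i:i\in F\})$ finite and $M\in\Nset$ with
$$\xi\in {\rm cl}_{s_dX}\bigcup_{i\in F}B_d^M(x_i,1/n).$$

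The final step, which is where all pieces must fit together, is to verify that $g$ is bounded on $\bigcup_{i\in F}B_d^M(x_i,1/n)$, forcing $g^*(\xi)\neq\infty$. Choose $L\in\Nset$ with $f(x_i)\leq L$ for all $i\in F$, so in particular $g(x_i)\leq L$ on these representatives. For any $x\in B_d^M(x_i,1/n)$ there is a chain $x=u_0,u_1,\dots,u_m=x_i$ of length $m\leq M$ with consecutive jumps $<1/n$; the Lipschitz-in-the-small bound telescopes to $|g(x)-g(x_i)|\leq K\cdot M/n$, hence $g(x)\leq L+KM/n$ uniformly on the entire union. The main obstacle is precisely this coupling: the construction of $f$ must simultaneously encode the $g$-values strongly enough that any $\rho_{n,f}$-bounded neighborhood of $\xi$ lies inside a region where $g$ is controlled. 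Once the choice $f=[g\circ(\text{representative})]+1$ is in hand, the rest is the telescoping chain estimate and invocation of Lemma \ref{bounded n,f 1}.
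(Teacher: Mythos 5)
Your proposal is correct and follows essentially the same route as the paper: the same reduction to Lipschitz-in-the-small functions via uniform density, the same key choice $f=[g(x_i)]+1$ coupling $f$ to $g$ on each $1/n$-chainable component, and the same application of Proposition \ref{characterization-H(Lip_d(X))} and Lemma \ref{bounded n,f 1} followed by the telescoping chain estimate. The only (harmless) cosmetic difference is that you bound $g$ by $L+KM/n$ directly against the representatives, whereas the paper bounds $|g(x)-g(x_j)|$ against a fixed $j\in F$.
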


\smallskip

\section{Samuel realcompact metric spaces}

\smallskip

In this section we are going to obtain an analogous result to the well known Kat\v{e}tov-Shirota theorem.  Recall that in this theorem it was  characterized the realcompactness of complete uniform spaces  by means of the non-measurability of the cardinality of its  closed discrete subspaces (see \cite{gillman}). We will give an analogous result in the special case of metric spaces and for the Samuel realcompactness. As we have said before, a metric space $(X,d)$ is said to be  Samuel realcompact whenever $X=H(U_d(X))$. First of all, note that if $X$ is Samuel realcompact then it is in particular realcompact since its coincides with one of its realcompactifications.

\smallskip

If we think the Samuel realcompactification of a uniform space as the completion of its $c$-modification (see the first paragraph in Section 4), then a space is Samuel realcompact whenever its  $c$-modification is complete. This is precisely the definition of {\it uniform realcompleteness} given by Hu\v{s}ek and Pulgar\'{\i}n in \cite{husek}. We must also add here that N{\aa}jstad \cite{najstad} defined for a proximity space $X$ to be \textit{realcomplete} if for every $\xi \in s_{\mu} X-X$ there exists a proximally-continuous mapping that cannot be (continuously) extended to $\xi$, where $s_{\mu} X$ is the Samuel compactification of $X$ together with the totally bounded uniformity $\mu$ generated by the proximity (see \cite{alfsen}). In particular, since for a metric space, proximally-continuous functions for the metric proximity are exactly the uniformly continuous functions (see \cite{naimpally}), then  the notion of realcompleteness by N{\aa}jstad coincides with  Samuel realcompactness, at least in the frame of metric spaces.

\smallskip

On the other hand, examples of Samuel realcompact spaces are, for instance, every finite dimensional normed space, or more generally every Lipschitz realcompact metric space. Moreover, every uniformly discrete metric space  of non-measurable cardinal is also Samuel realcompact since it is in particular realcompact and $C(X)=U_d(X)$. Recall that a metric space $(X,d)$ is said to be {\it uniformly discrete} when there exists some $\varepsilon >0$, such that $d(x,y)>\varepsilon$, for $x\neq y$. For this class of metric spaces we have the following result characterizing its Samuel realcompactness, that will be also useful to derive our general result. In order to establish this, we need to  recall the notion of $\alpha$-bounded filter. A  filter $\mathfrak{F}$ in a  metric space $(X, d)$ is said to be $\alpha$-{\it bounded} when  every $f\in U_{d}(X,\Nset)$ is bounded (finite) on some  member $F\in \mathfrak{F}$. Note that, the equivalence between (2) and (4) in the next result could be known, nevertheless we provide a complete proof using  the special description of the Samuel realcompactification of any uniformly discrete space.

\begin{theorem} \label{Samuel-realcompactness for unif. discrete} Let $(X,d)$ a uniformly discrete metric space. The following are equivalent,
\begin{enumerate}
\item $X$ has  non-measurable cardinal.

\item $X$ is realcompact.

\item $X$ is Samuel realcompact.

\item Every $\alpha$-bounded ultrafilter  in $X$  is fixed.
\end{enumerate}
\begin{proof} The equivalence between $(1)$ and $(2)$ is well known (see \cite{gillman}). On the other hand, that $(2)$ and $(3)$ are equivalent follows at once since in this case $C(X)=U_d(X)$, and therefore  $\upsilon X=H(U_d(X))$.

Let us prove the equivalence between $(3)$ and $(4)$. First note that for a uniformly discrete metric space we have that its Samuel realcompactification can be also described as follows: $$H(U_d(X))=\big\{\xi\in s_d X: \, f^*(\xi)\neq \infty \text{\,\, for all \,} f\in U_d(X, \Nset)\big\}.$$ Indeed, it is clear that the first set is contained in the second one. To the reverse  inclusion, suppose that there exists $\xi \in s_d X$ such that $f^{*}(\xi)\neq\infty$, for all $f\in U_{d}(X,\Nset)$,   but for some $g\in U_{d}(X)$ we have $g^{*}(\xi)=\infty$. Note that with loss of generality, we can suppose $g\geq 1$. Consider $h:X\rightarrow \Nset$ the integer part of $g$, that is $h(x)=[g(x)]$, $x\in X$.  Clearly  $h\in U_{d}(X,\Nset)$, and $h^*(\xi)=\infty$ which is a contradiction.

So, to see that $(3)\Rightarrow (4)$, suppose $X$ is Samuel realcompact, and let $\mathfrak{F}$ an ultrafilter such that for every $f\in U_d(X, \Nset)$ there is $F_f\in\mathfrak{F}$ with  $f$  bounded on $F_f$. By compactness, let $\xi \in \bigcap \{{\rm cl}_{s_d X} F: F\in \mathfrak{F}\}$. Then, for every $f\in U_d(X, \Nset)$, $$f^{*}(\xi)\in f^{*}({\rm cl}_{s_d X} F_f)\subset \overline{f(F_f )}$$ and clearly $f^*(\xi)\neq \infty$. So, $\xi\in H(U_d(X))=X$ and $\mathfrak{F}$ must be  fixed since $$\xi \in \bigcap \big\{{\rm cl}_{s_d X} F \cap X: F\in \mathfrak{F}\big\}=\bigcap \big\{F: F\in \mathfrak{F}\big\}.$$

$(4)\Rightarrow (3)$ Now, suppose  $\xi \in H(U_d(X))$. Then, according to the above description of the Samuel realcompactification, we have that  $f^{*}(\xi)\neq\infty$, for all  $f\in U_d(X,\Nset)$. Thus, for every $f\in U_d (X,\Nset)$ there exists $V_f$ a neighbourhood of $\xi$ in $s_d X$ such that $f$ is bounded on $V_f \cap X$. Let $\mathfrak{F}$ an ultrafilter in $X$ containing the filter $\{V\cap X: V \text{ neighbourhood of } \xi \text{ in\,} s_d X\}$. In particular,  $\mathfrak{F}$ is $\alpha$-bounded and so it must be fixed in $X$. But $$\emptyset\neq \bigcap \big\{F: F\in \mathfrak{F}\big\} \subset \bigcap \big\{V\cap X: V \text{ neighbourhood of } \xi \text{ in\,} s_d X\big\} \subset \{\xi\}\cap X$$ so $\xi \in X$, as we wanted.
\end{proof}
\end{theorem}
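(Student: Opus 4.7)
The plan is to prove the equivalences in two clusters: first (1) $\Leftrightarrow$ (2) $\Leftrightarrow$ (3), which is essentially automatic from the uniform discreteness, and then (3) $\Leftrightarrow$ (4), which is the real content.

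For the first cluster, I would note that since $(X,d)$ is uniformly discrete, every real-valued function on $X$ is uniformly continuous, so $C(X)=U_d(X)$ and consequently $\upsilon X = H(C(X)) = H(U_d(X))$. This gives (2) $\Leftrightarrow$ (3) at once, while (1) $\Leftrightarrow$ (2) is the classical Ulam/Kat\v{e}tov--Shirota theorem for discrete spaces (a discrete space is realcompact iff its cardinality is non-measurable), which we may simply cite from \cite{gillman}.

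For (3) $\Leftrightarrow$ (4), the first step is to establish the sharper representation
$$H(U_d(X))=\bigl\{\xi\in s_d X:\, f^*(\xi)\neq\infty \text{ for all } f\in U_d(X,\Nset)\bigr\}.$$
One inclusion is trivial. For the other, given $\xi$ in the right-hand set and any $g\in U_d(X)$, we may assume $g\geq 1$ and form its integer part $h=[g]$; because $X$ is uniformly discrete, automatically $h\in U_d(X,\Nset)$, and since $g\leq h+1$ pointwise, $h^*(\xi)<\infty$ forces $g^*(\xi)<\infty$. This reduction is the single nontrivial ingredient and the step I expect to require the most care, since it is precisely what lets $\alpha$-boundedness (an $\Nset$-valued notion, cf.\ Proposition \ref{gloria-result}) govern the whole Samuel realcompactification.

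With this representation in hand, (3) $\Rightarrow$ (4) goes as follows: given an $\alpha$-bounded ultrafilter $\mathfrak{F}$, the family $\{\mathrm{cl}_{s_dX} F : F\in\mathfrak{F}\}$ has the finite intersection property in the compactum $s_dX$, hence admits a common point $\xi$. For each $f\in U_d(X,\Nset)$, $\alpha$-boundedness provides $F_f\in\mathfrak{F}$ on which $f$ is bounded, and by continuity $f^*(\xi)\in\overline{f(F_f)}\subset\Nset$, so $f^*(\xi)\neq\infty$. By the representation, $\xi\in H(U_d(X))=X$; since $X$ is discrete and sits as a subspace, the trace $\bigcap\{F:F\in\mathfrak{F}\}$ reduces to $\{\xi\}$, so $\mathfrak{F}$ is fixed.

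Conversely, for (4) $\Rightarrow$ (3), take any $\xi\in H(U_d(X))$. The representation gives $f^*(\xi)\neq\infty$ for every $f\in U_d(X,\Nset)$, so by continuity of $f^*$ there is a neighbourhood $V_f$ of $\xi$ in $s_dX$ on which $f$ is bounded. The trace filter $\{V\cap X : V \text{ a neighbourhood of } \xi\}$ is therefore $\alpha$-bounded, and any ultrafilter $\mathfrak{F}$ refining it inherits $\alpha$-boundedness, hence is fixed by hypothesis. Its unique cluster point lies in every $V\cap X$ and thus must coincide with $\xi$, forcing $\xi\in X$. This yields $H(U_d(X))=X$, completing the cycle. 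The only real obstacle is the reduction to $\Nset$-valued test functions; the ultrafilter arguments themselves are then standard compactness manipulations.
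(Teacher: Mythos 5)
Your proposal is correct and follows essentially the same route as the paper: the same reduction of $(1)\Leftrightarrow(2)\Leftrightarrow(3)$ via $C(X)=U_d(X)$, the same key representation of $H(U_d(X))$ by $\Nset$-valued test functions obtained from the integer part of $g\geq 1$, and the same ultrafilter/cluster-point arguments for $(3)\Leftrightarrow(4)$. The only cosmetic difference is that you argue the representation directly from $g\leq h+1$ where the paper argues by contradiction; the content is identical.
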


Next example shows that the above result does not work for discrete metric spaces that are not uniformly discrete, and in particular that realcompactness is not equivalent to Samuel realcompactness, even for discrete spaces.

\begin{example} \label{example 2} Consider $X=\{1/n: n\in \Nset\}$ with its usual metric $d$. Then $(X,d)$ is a discrete metric with countable (and so non-measurable) cardinal that is realcompact but not Samuel realcompact. Indeed, every uniformly continuous function on $X$ can be continuously extended to its (compact) completion $\widetilde{X}=\{1/n: n\in \Nset\} \cup \{0\}$, and then it must be bounded. Then, we have that $X=\upsilon X\neq  H(U_d(X))=s_d X$. Moreover, since $X$ is a totally bounded  metric space,  then we know that its Samuel compactification is in fact its metric completion $\widetilde{X}$ (see  \cite{woods}).
\end{example}

\smallskip

We are going to state our main result in this section that can be considered in the line of the well known Kat\v{e}tov-Shirota result. Here,   we will see that for metric spaces (with some additional non-measurable cardinal property) Samuel realcompactness is equivalent to some kind of completeness, namely Bourbaki-completeness. So, we need to recall the notion of Bourbaki-complete metric space  introduced and studied in \cite{merono2}.

\begin{definition} {\rm (\cite{merono2})}  A metric space $(X,d)$ is said to be {\it Bourbaki-complete} if every Bourbaki-Cauchy net clusters, where a net $(x_{\lambda})_{\lambda\in \Lambda}$ is {\it Bourbaki-Cauchy} if for every $\varepsilon>0$ there exist $m\in \Nset$ and $\lambda_{0} \in \Lambda$ such that for some $x_0\in X$, $x_{\lambda}\in B^{m}_d(x_0, \varepsilon)$, for all $\lambda \geq \lambda _{0}$.
\end{definition}

Note that every compact metric space is Bourbaki-complete and that every Bourbaki-complete metric space is complete (every Cauchy net is in particular Bourbaki-Cauchy). However the reverse implications are not true. For instance,  $\Rset$ with the usual metric is a Bourbaki-complete metric space which is not compact and every infinite dimensional Banach space is complete but not Bourbaki-complete (see next Proposition). More examples and results  of Bourbaki-completeness are  given in \cite{merono2}. For instance,  it can be seen there that the notion of Bourbaki-completeness by nets is equivalent to the corresponding notion by sequences, as the same happens with the usual completeness. For this reason we will use  either nets or sequences as convenient.

\smallskip

On the other hand, it is interesting to realize that the role that the Bourbaki-bounded subsets play into the  Bourbaki-complete metric spaces is the same as the totally bounded subsets  play for the usual completeness. Next result, that we will use later, makes it clear.

\begin{proposition} {\rm (\cite{merono2})} \label{Bourbaki-complete and Bourbabi-bounded} For  a metric space $(X,d)$ the following are equivalent:
\begin{enumerate}
\item $X$ is Bourbaki-complete.

\item Every closed Bourbaki-bounded subset of $X$ is compact.
\end{enumerate}
\end{proposition}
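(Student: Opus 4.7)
The plan is to prove both implications by exploiting the metric characterization of Bourbaki-bounded sets together with the sequential version of Bourbaki-completeness (equivalent, as the authors note, to the net version).

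For the implication $(1)\Rightarrow (2)$, I would take a closed Bourbaki-bounded subset $B\subset X$ and show that every sequence $(x_n)$ in $B$ has a cluster point in $B$. The key step is to extract a Bourbaki-Cauchy subsequence by a diagonal argument: for each $k\in \Nset$, the Bourbaki-boundedness of $B$ gives a finite cover $B\subset \bigcup_{i=1}^{N_k} B^{m_k}_d(y^k_i, 1/k)$, so the pigeonhole principle lets me pass to a subsequence that lies entirely inside one such set $B^{m_k}_d(y^k_{i(k)}, 1/k)$. Iterating and diagonalising produces a subsequence $(x_{n_j})$ that for every $k$ is eventually contained in a single $1/k$-multi-ball of fixed depth, which is exactly the condition to be Bourbaki-Cauchy. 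By hypothesis $(1)$ this subsequence clusters in $X$, and since $B$ is closed the cluster point lies in $B$.

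For $(2)\Rightarrow (1)$, I would take a Bourbaki-Cauchy sequence $(x_n)$ and check that its range $A=\{x_n:n\in\Nset\}$ is Bourbaki-bounded in $X$. Indeed, given $\varepsilon>0$, the Bourbaki-Cauchy condition produces $m\in\Nset$, $n_0\in\Nset$ and $x_0\in X$ with $x_n\in B^m_d(x_0,\varepsilon)$ for all $n\geq n_0$, and the finitely many earlier terms $x_1,\dots,x_{n_0-1}$ each sit inside their own $B^1_d(x_i,\varepsilon)$. Next I would verify that $\overline{A}$ is again Bourbaki-bounded, which follows cleanly from Proposition \ref{Bourbaki-bounded versus uniform functions}: any $f\in U_d(X)$ bounded on $A$ remains bounded on $\overline{A}$ by continuity. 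Applying hypothesis $(2)$ to the closed Bourbaki-bounded set $\overline{A}$ makes it compact, and therefore $(x_n)$ has a cluster point in $\overline{A}\subset X$, proving Bourbaki-completeness.

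The main obstacle is the diagonal extraction in $(1)\Rightarrow (2)$, because unlike in the totally bounded setting, the depth $m_k$ of the multi-balls is allowed to grow with $k$ and the multi-balls $B^{m_k}_d(y^k_{i(k)},1/k)$ at different scales do not nest in an obvious way; one has to be careful that the diagonal sequence really witnesses the Bourbaki-Cauchy condition at each scale $\varepsilon$ simultaneously. Once this bookkeeping is set up correctly the rest is routine, and the reverse direction is essentially a short consequence of the uniformly-continuous-function characterisation of Bourbaki-boundedness.
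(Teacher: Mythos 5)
The paper does not actually prove this proposition; it is imported from \cite{merono2} without proof, so there is nothing internal to compare against. Your argument is correct and is the natural one: the diagonal/pigeonhole extraction of a Bourbaki--Cauchy subsequence for $(1)\Rightarrow(2)$ works exactly as you describe (the non-nesting of the multi-balls at different scales is harmless, since the Bourbaki--Cauchy condition only asks, for each $\varepsilon$, for \emph{some} depth $m$ and \emph{some} centre capturing a tail, which the $k$-th stage of the diagonal provides once $1/k<\varepsilon$), and the reverse direction via Proposition \ref{Bourbaki-bounded versus uniform functions} is clean and non-circular, since that proposition is established independently of the present statement. Two small remarks: the step that $\overline{A}$ is again Bourbaki-bounded can also be seen directly from $\overline{B^{m}_{d}(x,\varepsilon)}\subset B^{m+1}_{d}(x,\varepsilon)$, without invoking the function characterization; and your proof of $(2)\Rightarrow(1)$ establishes clustering of Bourbaki--Cauchy \emph{sequences}, so it tacitly uses the sequence--net equivalence, which the paper explicitly licenses (again citing \cite{merono2}) but which would need its own proof in a fully self-contained treatment.
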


Now, we have all the ingredients to establish our main result about Samuel realcompactness. But previously we need the following Lemma  whose proof follows using the above characterization of Samuel realcompactness for uniformly discrete spaces (Theorem \ref{Samuel-realcompactness for unif. discrete}).

\begin{lemma} \label{Non-measurable cardinal I_1/n} Let $(X,d)$ be a metric space and  $n\in \Nset$. If $I_{1/n}$ has non-measurable cardinal then $$ H(U_d(X))\subset \biguplus_{i\in I_{1/n}} {\rm cl} _{s_{d} X } B^{\infty}_{d}(x_{i},1/n).$$
\begin{proof} First note that always  the union appearing in the above formula is a disjoint union, since $B^{\infty}_{d}(x_{i},1/n)$ and $B^{\infty}_{d}(x_{j},1/n))$, $i\neq j$, are subsets in $X$ that are $1/n$ $d$-apart and then they have disjoint closure in $s_d X$ (see \cite{woods}).

Now, let $\xi\in H(U_d(X))$. If we consider all the $1/n$-chainable components of $X$,  these components are a uniform partition of $X$. Then, the subspace  formed by all the representative points, that we can identify with $I_{1/n}$, is uniformly discrete. Since $I_{1/n}$  has  non-measurable cardinal, then it satisfies all the equivalent conditions  in Theorem \ref{Samuel-realcompactness for unif. discrete}. Now we are going to consider  an special $\alpha$-bounded ultrafilter $\mathfrak{F}$ in $I_{1/n}$. Namely, let
$$\mathfrak{F}=\Big\{F\subset I_{1/n}:  \xi \in {\rm cl}_{s_d X} \Big (\bigcup_{i \in F}  B^{\infty}_{d}(x_i,1/n)\Big )\Big\}.$$

First, note that $\mathfrak{F}\neq \emptyset$, since $F=I_{1/n}$ clearly belongs to  $\mathfrak{F}$. Moreover,  as we have noticed previously, two sets that are $1/n$ $d$-apart in $X$  have disjoint closures in $s_dX$. Thus,  if $F, F\,' \in \mathfrak{F}$, then $F\cap F\,'\neq \emptyset$, since (by definition of $\mathfrak{F}$)
$$\xi \in  {\rm cl}_{s_d X} \Big(\bigcup_{i \in F}B^{\infty}_{d}(x_i,1/n)\Big)\bigcap  {\rm cl}_{s_d X} \Big(\bigcup_{i \in F\,'}B^{\infty}_{d}(x_i,1/n)\Big).$$
Furthermore, we have that $F\cap F\,'\in \mathfrak{F}$. Indeed, taking  into account that  $F=(F\cap F\,')\cup (F\setminus F\,')$,   $F\,'=(F\cap F\,')\cup (F\,'\setminus F)$ and also that the sets $\Big(\bigcup_{i \in F\setminus F\,'}B^{\infty}_{d}(x_i,1/n)\Big)$ and $\Big(\bigcup_{i \in F\,'\setminus F}B^{\infty}_{d}(x_i, 1/n)\Big)$ are $1/n$ $d$-apart, then  $$\xi\in {\rm cl}_{s_d X} \Big(\bigcup_{i \in F\cap F\,'}B^{\infty}_{d}(x_i, 1/n)\Big).$$

Finally, $\mathfrak{F}$ is a filter in $I_{1/n}$ since clearly  when $F\subset F\,'\subset I_{1/n}$ and $F\in \mathfrak{F}$, then $F\,'\in \mathfrak{F}$. That   $\mathfrak{F}$ is an ultrafilter follows immediately  since for every $F\in I_{1/n}$, we have that $$X= \Big(\bigcup_{i \in F}B^{\infty}_{d}(x_i,1/n)\Big)\cup   \Big(\bigcup_{i \in I_{1/n}\setminus F}B^{\infty}_{d}(x_i,1/n)\Big).$$ Then   $X$ is the disjoint union of two sets  $1/n$ $d$-apart  and therefore $F$ or $I_{1/n}\setminus F$ is in $\mathfrak{F}$.

\smallskip

In order to see that  $\mathfrak{F}$ is $\alpha$-bounded, let $f\in U_{d}(I_{1/n},\Nset)= U_d^n(X,\Nset)$, and take the metric $\rho_{n,f}$. According to Theorem \ref{Samuel as supremum Lipschitz-2}, we have that $\xi \in H(Lip_{\rho_{n,f}}(X))$, and then $\xi\in {\rm cl}_{s_d X} B$ for some $\rho_{n,f}$-bounded subset $B$ (Proposition \ref{characterization-H(Lip_d(X))}). Applying now Lemma \ref{bounded n,f 1}, there exist $F\subset I_{1/n}$ with  $f(\{x_i :i\in F\})$  finite, and  $M\in \Nset$ satisfying that $B\subset \bigcup _{i \in F} B^{M}_{d}(x_ i , 1/n).$
That is, $f$ is bounded on some member $F$ of $\mathfrak{F}$. Therefore $\mathfrak{F}$ is an $\alpha$-bounded ultrafilter.

Now, from Theorem \ref{Samuel-realcompactness for unif. discrete}, $\mathfrak{F}$ must be fixed, and then there exists (a unique) $i_0\in I_{1/n}$ in the intersection of all the sets in $\mathfrak{F}$. And we finish since $\xi\in {\rm cl}_{s_d X} B^{\infty}_{d}(x_{i_0}, 1/n)$, as we wanted.
\end{proof}
\end{lemma}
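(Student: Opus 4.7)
The approach is to encode each $\xi \in H(U_d(X))$ as an ultrafilter on the index set $I_{1/n}$ of $1/n$-chainable components, and then apply Theorem~\ref{Samuel-realcompactness for unif. discrete} to the uniformly discrete space $\{x_i : i \in I_{1/n}\}$. The non-measurability of $|I_{1/n}|$ will force any $\alpha$-bounded ultrafilter on $I_{1/n}$ to be fixed, and its fixing index will single out the unique component whose $s_dX$-closure contains $\xi$. I would first record that the union in the statement is disjoint, because distinct components $B^{\infty}_d(x_i,1/n)$ and $B^{\infty}_d(x_j,1/n)$ are $1/n$-apart in $X$ and so have disjoint closures in $s_dX$ by the standard Samuel compactification fact.

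Now fix $\xi \in H(U_d(X))$ and set
$$\mathfrak{F} := \Big\{F \subset I_{1/n} : \xi \in \text{cl}_{s_d X}\Big(\bigcup_{i \in F} B^{\infty}_d(x_i,1/n)\Big)\Big\}.$$
To check that $\mathfrak{F}$ is an ultrafilter, I would systematically exploit the fact that unions indexed by disjoint subsets of $I_{1/n}$ are $1/n$-apart in $X$, and hence have disjoint closures in $s_dX$. Upward closure is trivial; stability under binary intersections follows by decomposing $F=(F\cap F')\cup(F\setminus F')$ and similarly for $F'$, noting that the two ``difference'' pieces cannot both absorb $\xi$; and the ultrafilter dichotomy comes from splitting $X$ along any $F$ and its complement in $I_{1/n}$.

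The technical heart is showing that $\mathfrak{F}$ is $\alpha$-bounded, i.e.\ that every $f \in U_d(I_{1/n},\Nset)$, equivalently every $f \in U^{n}_{d}(X,\Nset)$, is bounded on some member of $\mathfrak{F}$. Given such an $f$, I would pass to the metric $\rho_{n,f}$ introduced in Section~3. By Theorem~\ref{Samuel as supremum Lipschitz-2}, $\xi$ lies in $H(Lip_{\rho_{n,f}}(X))$, and then by Proposition~\ref{characterization-H(Lip_d(X))} it sits in the $s_dX$-closure of some $\rho_{n,f}$-bounded set $B$. Lemma~\ref{bounded n,f 1} then yields $F\subset I_{1/n}$ with $f(\{x_i : i\in F\})$ finite and $M\in\Nset$ such that $B \subset \bigcup_{i\in F} B^{M}_d(x_i,1/n)$; this $F$ witnesses both $F\in\mathfrak{F}$ and the boundedness of $f$ on $F$.

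With $\alpha$-boundedness established, Theorem~\ref{Samuel-realcompactness for unif. discrete} applied to the uniformly discrete $I_{1/n}$ gives a fixing index $i_0 \in \bigcap_{F\in\mathfrak{F}} F$, hence $\{i_0\}\in\mathfrak{F}$, which is exactly $\xi \in \text{cl}_{s_d X} B^{\infty}_d(x_{i_0},1/n)$, completing the desired inclusion. The step I expect to be the main obstacle is the $\alpha$-boundedness verification, because it is the one place where ``$\xi \in H(U_d(X))$'' must be converted into concrete information about a single $\Nset$-valued uniformly continuous function, and it requires braiding together three previously built tools---the auxiliary metrics $\rho_{n,f}$, the characterization of $H(U_d(X))$ in Theorem~\ref{Samuel as supremum Lipschitz-2}, and the structural Lemma~\ref{bounded n,f 1}---in exactly the right sequence.
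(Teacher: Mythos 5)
Your proposal is correct and follows essentially the same route as the paper: the same ultrafilter $\mathfrak{F}$ on $I_{1/n}$, the same use of disjoint closures of $1/n$-apart sets to verify the ultrafilter axioms, and the same chain of Theorem \ref{Samuel as supremum Lipschitz-2}, Proposition \ref{characterization-H(Lip_d(X))} and Lemma \ref{bounded n,f 1} to establish $\alpha$-boundedness before invoking Theorem \ref{Samuel-realcompactness for unif. discrete}. No gaps.
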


\smallskip

\begin{theorem} \label{characterization Samuel realcompactness} A metric space $(X,d)$ is Samuel realcompact if and only if it is Bourbaki-complete and every uniformly discrete subspace of  $X$ has non-measurable cardinal.
\begin{proof} Suppose $(X,d)$ is Samuel realcompact. Hence, it is realcompact, and then every discrete closed subspace has non-measurable cardinal (see \cite{gillman}). In particular, since  every uniform  discrete subspace  is closed it must have non-measurable cardinal. Now, in  order to analyze the Bourbaki-completeness of $X$ we are going to apply last Proposition \ref{Bourbaki-complete and Bourbabi-bounded}. So, let $B$ any closed  Bourbaki-bounded subset of $X$. Then, from Proposition \ref{characterization-bourbaki-bounded}, $B$ is $\rho$-bounded for every metric $\rho$ uniformly equivalent to $d$. Hence $${\rm cl}_{s_d X}B\subset \bigcap_{\rho\stackrel{{u}}{\sim} d} H(Lip_\rho(X))=H(U_d (X))=X.$$ That is, every closed and Bourbaki-bounded subset of $X$ is compact, as we wanted.

\smallskip

Conversely,  let  $\xi \in H(U_d(X))$,  and let  $(x_\lambda)_{\lambda\in \Lambda}$ be a net in $X$ converging to $\xi$. Clearly, since $X$ is Bourbaki-complete, we finish if we see that this net is Bourbaki-Cauchy. So, let $\varepsilon>0$ and take  $n\in \Nset$ with $1/n<\varepsilon$. Apply  Lemma \ref{Non-measurable cardinal I_1/n}, since from the hypothesis the uniformly discrete subspace $I_{1/n}$ must have non-measurable cardinal,  let $B^\infty_{d}(x_{i_0},1/n)$ the unique $1/n$-chainable component containing $\xi$ in its closure.

Now, consider the metric $\rho_{n,f}$ when $f\equiv 1$ is the constant function.  We know that $\xi\in H(Lip_{\rho_{n,f}}(X))$ (from Theorem \ref{Samuel as supremum Lipschitz-2}), and hence $\xi\in {\rm cl}_{s_dX}B$ for some $\rho_{n,f}$-bounded subset $B$ (Proposition \ref{characterization-H(Lip_d(X))}). Now Lemma \ref{bounded n,f 1} ensures the existence of   $F\subset I_{1/n}$ and  $M\in \Nset$ satisfying that $B\subset \bigcup _{i \in F} B^{M}_{d}(x_ i , 1/n).$
Since $B\cap B^\infty_{d}(x_{i_0},1/n)$ and $B\setminus B^\infty_{d}(x_{i_0},1/n)$ are 1/n $d$-apart, then we deduce that $$\xi\in {\rm cl}_{s_d X}\big(B\cap B^\infty_{d}(x_{i_0},1/n)\big)\subset {\rm cl}_{s_d X}\big ( B^{M}_{d}(x_ {i_0}, 1/n)\big ). $$

We assert that there exists  $\lambda_0\in \Lambda$, such that for $\lambda\geq \lambda_0$,  $x_\lambda\in B^{M+1}_{d}(x_ {i_0}, 1/n)$. Otherwise,  $\xi$ would be in the closure of two sets $1/n$ $d$-apart, namely $ B^{M}_{d}(x_ {i_0}, 1/n)$ and $X\setminus  B^{M+1}_{d}(x_ {i_0}, 1/n))$, which is impossible. And we finish since $(x_\lambda)_\lambda$ is Bourbaki-Cauchy, as we wanted.
\end{proof}
\end{theorem}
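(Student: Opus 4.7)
The plan is to treat the two directions separately, leveraging the intersection representation $H(U_d(X)) = \bigcap_{\rho \stackrel{{u}}{\sim} d} H(Lip_\rho(X))$ from Theorem \ref{Samuel as supremum Lipschitz} for the necessity, and the refined intersection over the family $\{\rho_{n,f}\}$ from Theorem \ref{Samuel as supremum Lipschitz-2} together with Lemma \ref{Non-measurable cardinal I_1/n} for the sufficiency.

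For the necessity, assume $X = H(U_d(X))$. Since $U_d(X) \subset C(X)$ implies $H(U_d(X)) \leq \upsilon X$, the assumption forces $X = \upsilon X$, so $X$ is realcompact. A uniformly discrete subspace is automatically closed in $X$ (any limit point would eventually violate the uniform separation), so it is closed and discrete in the realcompact space $X$, whence by the classical result it has non-measurable cardinal. To obtain Bourbaki-completeness, I would apply Proposition \ref{Bourbaki-complete and Bourbabi-bounded}: take an arbitrary closed Bourbaki-bounded subset $B$; by Proposition \ref{characterization-bourbaki-bounded}, $B$ is $\rho$-bounded for every uniformly equivalent metric $\rho$; by Proposition \ref{characterization-H(Lip_d(X))}, this places ${\rm cl}_{s_d X} B$ inside every $H(Lip_\rho(X))$; Theorem \ref{Samuel as supremum Lipschitz} then collapses the intersection to $X$, whence ${\rm cl}_{s_d X} B \subset X$ and $B$ is compact.

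For the sufficiency, assume Bourbaki-completeness together with the non-measurable cardinal hypothesis, and let $\xi \in H(U_d(X))$. Pick a net $(x_\lambda) \subset X$ with $x_\lambda \to \xi$ in $s_d X$. If I can show this net is Bourbaki-Cauchy, Bourbaki-completeness will supply a cluster point in $X$, and Hausdorffness of $s_d X$ will force it to equal $\xi$. To verify the Bourbaki-Cauchy condition at scale $\varepsilon$, fix $n \in \Nset$ with $1/n < \varepsilon$. The representatives $\{x_i : i \in I_{1/n}\}$ form a uniformly discrete subspace of $X$, so $I_{1/n}$ has non-measurable cardinal, and Lemma \ref{Non-measurable cardinal I_1/n} pinpoints a unique $i_0$ with $\xi \in {\rm cl}_{s_d X} B^{\infty}_{d}(x_{i_0}, 1/n)$.

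The main obstacle is upgrading this unique-component statement to a uniform bound on the chain length $M$, so that the tail of $(x_\lambda)$ actually lives inside $B^{M+1}_d(x_{i_0}, 1/n)$. This is where the auxiliary metric $\rho_{n,f}$ with $f \equiv 1$ enters decisively: by Theorem \ref{Samuel as supremum Lipschitz-2}, $\xi \in H(Lip_{\rho_{n,1}}(X))$, so Proposition \ref{characterization-H(Lip_d(X))} places $\xi$ in ${\rm cl}_{s_d X} B$ for some $\rho_{n,1}$-bounded set $B$, and Lemma \ref{bounded n,f 1} supplies $F \subset I_{1/n}$ and $M \in \Nset$ with $B \subset \bigcup_{i \in F} B^{M}_{d}(x_i, 1/n)$. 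Because distinct chainable components are $1/n$ apart and hence have disjoint closures in $s_d X$, intersecting with the $i_0$-component yields $\xi \in {\rm cl}_{s_d X} B^{M}_{d}(x_{i_0}, 1/n)$. If $(x_\lambda)$ failed to be eventually inside $B^{M+1}_d(x_{i_0}, 1/n)$, a cofinal subnet of its complement would still converge to $\xi$, so $\xi$ would also lie in ${\rm cl}_{s_d X}(X \setminus B^{M+1}_d(x_{i_0}, 1/n))$; but this complement is $1/n$ apart from $B^{M}_{d}(x_{i_0}, 1/n)$, giving the required contradiction and completing the proof.
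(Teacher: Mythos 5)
Your proposal is correct and follows essentially the same route as the paper's own proof: the necessity via Proposition \ref{Bourbaki-complete and Bourbabi-bounded} and the intersection formula of Theorem \ref{Samuel as supremum Lipschitz}, and the sufficiency via Lemma \ref{Non-measurable cardinal I_1/n}, the metric $\rho_{n,f}$ with $f\equiv 1$, Lemma \ref{bounded n,f 1}, and the disjoint-closures argument for sets $1/n$ $d$-apart. The small touches you add (justifying realcompactness through $H(U_d(X))\leq \upsilon X$ and invoking Hausdorffness to identify the cluster point with $\xi$) are harmless elaborations of steps the paper leaves implicit.
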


\begin{remark} \label{remark-->uniform partition} Observe that in the above  proof we only use the non-measurable cardinality of the sets $I_{1/n}$, for every $n$, which is in fact equivalent to the property  that ``every uniform partition of $X$ has non-measurable cardinality''. Clearly, spaces having  this property are for instance every connected metric space, or more generally every uniformly connected, and also every separable metric space.
\end{remark}

Therefore, according with this remark, last theorem can be rewritten as follows.

\begin{theorem} \label{characterization Samuel realcompactness 2} A metric space $(X,d)$ is Samuel realcompact if and only if it is Bourbaki-complete and every uniform partition of  $X$ has non-measurable cardinal.
\end{theorem}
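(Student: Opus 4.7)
The plan is to reduce Theorem \ref{characterization Samuel realcompactness 2} directly to Theorem \ref{characterization Samuel realcompactness}, exploiting the observation already recorded in Remark \ref{remark-->uniform partition}. The two statements differ only in the cardinality hypothesis (uniformly discrete subspaces versus uniform partitions), so the task reduces to a small translation between these two smallness conditions, together with an appeal to what has just been proved.

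For the forward direction, I would assume $(X,d)$ is Samuel realcompact and invoke Theorem \ref{characterization Samuel realcompactness} to obtain Bourbaki-completeness at once. To handle the partition hypothesis, I would take an arbitrary uniform partition $\{A_i\}_{i\in I}$ of $X$, with separation parameter $\varepsilon>0$, and simply select one representative $x_i\in A_i$ per part. Since $d(A_i,A_j)\geq\varepsilon$ whenever $i\neq j$, the set $\{x_i:i\in I\}$ is a uniformly discrete subspace of $X$; Theorem \ref{characterization Samuel realcompactness} then forces it, and therefore $I$, to have non-measurable cardinal.

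For the converse, assuming that $X$ is Bourbaki-complete and that every uniform partition of $X$ has non-measurable cardinal, I would observe that for every $n\in\Nset$ the decomposition $X=\biguplus_{i\in I_{1/n}}B^{\infty}_d(x_i,1/n)$ into $1/n$-chainable components is itself a uniform partition (its pieces are $1/n$ $d$-apart). Hence $|I_{1/n}|$ is non-measurable for each $n$. Remark \ref{remark-->uniform partition} records that exactly this fact was used in the converse half of Theorem \ref{characterization Samuel realcompactness}, so the very same argument goes through verbatim and shows that $X$ is Samuel realcompact.

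The only place one might stumble is making the forward direction clean, i.e.\ verifying that picking representatives from the parts of a uniform partition actually produces a uniformly discrete subspace of $X$ of the same cardinality as the index set. This is immediate from the definition of a uniform partition, so no real obstacle remains; the deep work is already contained in Theorem \ref{characterization Samuel realcompactness}, and this result is essentially a repackaging made possible by the remark.
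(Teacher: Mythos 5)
Your proposal is correct and matches the paper's own route: the paper justifies this theorem purely by Remark \ref{remark-->uniform partition}, i.e.\ by noting that the converse half of Theorem \ref{characterization Samuel realcompactness} only uses non-measurability of the index sets $I_{1/n}$ (which the chainable-component partitions supply), while the forward half transfers to uniform partitions by choosing one representative per part to form a uniformly discrete subspace. You have simply written out the details that the paper leaves implicit.
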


Nevertheless, some condition of non-measurable cardinality is needed in Theorems \ref{characterization Samuel realcompactness} and \ref{characterization Samuel realcompactness 2}.  Indeed, if $X$ is a set with a measurable cardinal endowed with the $0-1$ metric then it is Bourbaki-complete but not Samuel realcompact since in fact it is not realcompact. As it is well known, in the absence of measurable cardinals, every metric space is realcompact (see \cite{gillman}). But the same is not true for  Samuel realcompactness. For instance, if the metric space is not complete, then it can not be Bourbaki-complete nor Samuel realcompact.

\smallskip

In fact,  realcompactness  and Samuel realcompactness are properties that can be very far away, as the next easy result shows.

\begin{corollary} A Banach space is Samuel realcompact if and only if it has finite dimension.
\end{corollary}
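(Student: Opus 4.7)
The plan is to derive this as a short consequence of Theorem \ref{characterization Samuel realcompactness} combined with the Heine--Borel characterization of Lipschitz realcompactness (Proposition \ref{Lipschitz-realcompact}) and the fact, already noted in Section~3, that in a normed space the Bourbaki-bounded subsets coincide with the norm-bounded ones. No extra machinery beyond what has been proved is needed.

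For the ``if'' direction, suppose the Banach space $X$ has finite dimension. Then every closed bounded subset of $X$ is compact, so by Proposition \ref{Lipschitz-realcompact} we obtain $X = H(Lip_{d}(X))$. Using the general inclusion $X \subset H(U_{d}(X)) \subset H(Lip_{d}(X))$ established in Section~4, this forces $X = H(U_{d}(X))$, i.e., $X$ is Samuel realcompact.

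For the ``only if'' direction, suppose $X$ is Samuel realcompact. By Theorem \ref{characterization Samuel realcompactness}, $X$ must be Bourbaki-complete, and therefore, by Proposition \ref{Bourbaki-complete and Bourbabi-bounded}, every closed Bourbaki-bounded subset of $X$ is compact. Apply this to the closed unit ball $B$ of $X$: since $B$ is norm-bounded it is Bourbaki-bounded, and it is obviously closed, hence $B$ is compact. By the classical Riesz theorem, compactness of $B$ occurs only when $X$ is finite-dimensional.

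There is no real obstacle here: both directions are essentially bookkeeping applications of results already in hand. It is worth noting that the non-measurable cardinal hypothesis in Theorem \ref{characterization Samuel realcompactness} plays no role in this argument, because Bourbaki-completeness alone already fails on any infinite-dimensional Banach space.
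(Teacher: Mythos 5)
Your proof is correct and follows essentially the same route as the paper: finite dimension gives the Heine--Borel property, hence Lipschitz realcompactness and therefore Samuel realcompactness, while the converse goes through Bourbaki-completeness via Theorem \ref{characterization Samuel realcompactness}. The only difference is that you spell out the (paper-cited) fact that an infinite-dimensional normed space is not Bourbaki-complete, by combining Proposition \ref{Bourbaki-complete and Bourbabi-bounded} with the coincidence of Bourbaki-bounded and norm-bounded sets and Riesz's theorem, which is a worthwhile but minor elaboration.
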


\begin{proof} Firstly, it is clear that  every finite dimensional Banach space is Samuel realcompact since in fact it is Lipschitz realcompact. Conversely, if a space is Samuel realcompact  then it is Bourbaki complete (Theorem \ref{characterization Samuel realcompactness}), but as we have said before a normed space is Bourbaki-complete if and only if it has finite dimension.
\end{proof}

\smallskip

It is interesting to say here that different authors have obtained some kind of uniform Katetov-Shirota results. More precisely, in \cite{isbell}, Isbell proved that, for the particular case of  locally fine uniform spaces (see the definition in \cite{isbell}) without uniformly discrete subspaces of measurable cardinal, completeness implies the completeness of the $c$-modification of $X$. Later, Rice in \cite{rice}, and Reynolds and Rice in \cite{reynolds} demonstrated the analogous result but for the particular cases of uniform spaces satisfying that the  family of real-valued uniformly continuous functions is  closed under inversion, and also for  uniform spaces having a star-finite basis, always assuming that each closed (uniformly) discrete subspace of them has non-measurable cardinality. In the same line,  Hu\v{s}ek and Pulgar\'{\i}n in \cite{husek}, proved the same for uniformly 0-dimensional spaces without uniformly discrete subsets of measurable cardinal, where a uniform space is {\it uniformly 0-dimensional} if it has a base for the uniformity composed of partitions (for instance, every uniformly discrete space is uniformly 0-dimensional).
Finally,  N{\aa}jstad gave another characterization of  realcomplete proximity spaces (\cite{najstad}) but in a very  different style from  ours.
\smallskip

Leaving behind the discussion about the measurability or non-measurability of cardinals, we can say that Theorem \ref{characterization Samuel realcompactness} establishes the equivalence between two uniform properties in the frame of metric spaces, namely Samuel realcompactness and Bourbaki-completeness. In such a way that the study made in \cite{merono2} for Bourbaki-completeness can be used here in order to get more information  about Samuel realcompactness. For instance, we know that this property is hereditary for closed subspaces, and  also countably productive. Thus, spaces like $\Nset^\Nset$
and $\Rset^\Nset$ endowed with the corresponding product metrics, are Samuel realcompact. Moreover the problem of when there is, for a metrizable space, some  compatible  metric making it Samuel realcompact, will be now  equivalent to know when the space is what it is called  Bourbaki-completely metrizable. And therefore, from \cite{merono2},  we can obtain an answer to this question in the next result  that is in the line of the well know \v{C}ech theorem  saying that a metrizable space $X$ is completely metrizable if and if it is a $G_\delta$-set in $\beta X$ (see \cite{engelkingbook}).

\smallskip

\begin{theorem} Let $(X,\tau)$ be a metrizable space (with a non-measurable cardinal). Then there exists a compatible metric $d$ such that $(X,d)$ is Samuel realcompact if and only if $X = \bigcap_{n=1}^\infty G_n$ where each $G_n$ is an open and paracompact subspace of $s_dX$.
\end{theorem}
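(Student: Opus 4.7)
The plan is to reduce the statement to a \v{C}ech-type theorem for Bourbaki-completeness via Theorem \ref{characterization Samuel realcompactness}. Under the standing assumption that $|X|$ is non-measurable, every subspace of $X$ has non-measurable cardinality, so the second clause of Theorem \ref{characterization Samuel realcompactness} is automatically satisfied for any compatible metric. Consequently, for any such $d$, the space $(X,d)$ is Samuel realcompact if and only if it is Bourbaki-complete. Thus the question becomes: when does a metrizable space $(X,\tau)$ admit a compatible metric making it Bourbaki-complete?

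The answer is provided by the \v{C}ech-type characterization for Bourbaki-completeness established in \cite{merono2}, which parallels the classical fact that $(X,\tau)$ admits a compatible complete metric iff $X$ is a $G_\delta$ in $\beta X$. Here the role of $\beta X$ is played by the Samuel compactification $s_d X$, since $s_d X$ is the uniform analogue of $\beta X$; and, to account for the stronger completeness notion (Bourbaki-completeness sees not only points but also the chain-structure given by the iterated balls $B^{m}_d$), the $G_n$'s must not only be open but also paracompact subspaces of $s_d X$. After quoting this characterization, the two directions of the theorem follow by concatenation: forward, by taking the compatible Samuel-realcompact metric $d$, using Theorem \ref{characterization Samuel realcompactness} to obtain Bourbaki-completeness, and then invoking the cited \v{C}ech-type theorem; backward, by running the same chain in reverse to pass from the $G_\delta$-representation to a compatible Bourbaki-complete metric, and then (thanks to the non-measurable cardinality of $X$) to Samuel realcompactness.

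The main obstacle I anticipate is a subtle match-up of metrics: the representation $X=\bigcap G_n$ refers to a specific $s_d X$, so one must ensure that the metric $d$ appearing in the statement is consistent with the metric produced by the \v{C}ech-type theorem. Concretely, the backward direction starts from the hypothesis that \emph{for some} compatible metric $d$ the sets $G_n$ are open and paracompact in $s_d X$, and must deliver the same $d$ (or a uniformly equivalent one, which gives the same $s_d X$ and hence the same Samuel realcompactification) as the Bourbaki-complete witness. This compatibility is built into the cited theorem in \cite{merono2}; modulo that reference, the verification is essentially bookkeeping through the equivalences above.
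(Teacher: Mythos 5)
Your proposal is correct and follows essentially the same route as the paper, which gives no detailed argument but exactly this reduction: the non-measurability hypothesis makes the cardinality clause of Theorem \ref{characterization Samuel realcompactness} automatic, so Samuel realcompact metrizability is equivalent to Bourbaki-complete metrizability, and the latter is characterized by the cited \v{C}ech-type theorem of \cite{merono2}. The metric match-up issue you flag is real but is likewise absorbed into the reference in the paper's treatment.
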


\section{Samuel realcompactification and completion}

 \smallskip

At this point of the paper,  it  seems natural to analyze the relationship between the Samuel realcompactification of a metric space $(X,d)$ and the Samuel realcompactification of its completion $(\widetilde X,\widetilde d)$. First of all recall that, as was proved by Woods in \cite{woods},  the analogous question for the compactifications has an elegant answer, namely   $s_d X$ and $s_{\widetilde d} \, \widetilde X$ are equivalent compactifications of $X$. Observe, at this respect,  the difference between this compactification and the Stone-\v{C}ech compactification.

\smallskip

Thus, if we identify  $s_d X$ and $s_{\widetilde d} \, \widetilde X$ and we write $X\subset \widetilde X\subset H(U_{\widetilde d}\, (\widetilde X))\subset s_d X$, then the following result  can be easily derived. Note that this result given in terms of the corresponding  $c$-modifications and completions  is essentially contained in \cite{ginsburg}.

\begin{proposition} \label{equal Samuel realc.} Let  $(X,d)$ be a metric space and $(\widetilde X,\widetilde d)$ its completion. Then $H(U_{d}(X))$ and  $H(U_{\widetilde d}\, (\widetilde X))$ are equivalent realcompactifications of $X$.
\end{proposition}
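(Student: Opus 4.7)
The plan is to leverage the identification $s_dX\equiv s_{\widetilde d}\,\widetilde X$ (Woods's theorem, already cited) and then show that, under this identification, the two realcompactifications cut out the same subset of the common Samuel compactification. Throughout, I will use the characterization
\[
H(U_d(X))=\bigl\{\xi\in s_dX:\ f^{*}(\xi)\neq\infty\text{ for all }f\in U_d(X)\bigr\},
\]
and the analogous formula for $H(U_{\widetilde d}(\widetilde X))$ inside $s_{\widetilde d}\,\widetilde X$.

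The first step is the identification: since $(\widetilde X,\widetilde d)$ is a uniform space in which $X$ sits as a dense uniform subspace, and since every real-valued bounded uniformly continuous function on $X$ extends uniquely (and remains bounded and uniformly continuous) to $\widetilde X$, the compactification $s_{\widetilde d}\,\widetilde X$ is a compactification of $X$ to which every element of $U_d^{*}(X)$ extends; by the minimality characterization of the Samuel compactification, it is equivalent to $s_dX$ as a compactification of $X$. From this point on I treat $s_dX$ and $s_{\widetilde d}\,\widetilde X$ as the same space, with $X\subset\widetilde X$ both sitting densely inside.

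The second step is the bijection between function classes. Restriction $g\mapsto g|_X$ gives a map $U_{\widetilde d}(\widetilde X)\to U_d(X)$, which is injective by density of $X$ in $\widetilde X$ and surjective because every $f\in U_d(X)$, being uniformly continuous with values in the complete space $\mathbb R$, extends uniquely to a uniformly continuous $\widetilde f:\widetilde X\to\mathbb R$. This restriction is a unital vector-lattice isomorphism.

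The third step is to check that the $\mathbb R\cup\{\infty\}$-valued extensions to the compactification coincide. Given $f\in U_d(X)$ with extension $\widetilde f\in U_{\widetilde d}(\widetilde X)$, both $f^{*}:s_dX\to\mathbb R\cup\{\infty\}$ and $\widetilde f^{\,*}:s_{\widetilde d}\,\widetilde X\to\mathbb R\cup\{\infty\}$ are continuous maps agreeing with $f$ on the dense subset $X$; by uniqueness of continuous extensions into a Hausdorff space, $f^{*}=\widetilde f^{\,*}$ under the identification. Consequently
\[
\bigl\{\xi:\ f^{*}(\xi)\neq\infty\ \forall f\in U_d(X)\bigr\}=\bigl\{\xi:\ \widetilde f^{\,*}(\xi)\neq\infty\ \forall \widetilde f\in U_{\widetilde d}(\widetilde X)\bigr\},
\]
i.e.\ $H(U_d(X))$ and $H(U_{\widetilde d}(\widetilde X))$ coincide as subsets of the common compactification. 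Since both contain $X$ densely, they are equivalent realcompactifications of $X$. There is no genuine obstacle here; the only point requiring a small amount of care is verifying that Woods's identification of the two Samuel compactifications is compatible with the extensions of unbounded uniformly continuous functions into $\mathbb R\cup\{\infty\}$, which is handled by the density-plus-Hausdorff uniqueness argument above.
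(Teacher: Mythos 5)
Your proposal is correct and follows essentially the same route as the paper: identify $s_dX$ with $s_{\widetilde d}\,\widetilde X$, use $U_d(X)=U_{\widetilde d}(\widetilde X)|_X$ via the unique uniformly continuous extension to the completion, and observe that the two realcompactifications are then the same subset of the common Samuel compactification described by finiteness of the extensions $f^*$. The only difference is that you spell out the density-plus-Hausdorff uniqueness check for the $\mathbb R\cup\{\infty\}$-valued extensions, which the paper leaves implicit.
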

\begin{proof} The proof follows using  that  $U_d(X)=U_{\widetilde d}\, (\widetilde X)|_X$, and the equivalence $s_d X\equiv s_{\widetilde d} \, \widetilde X$. Indeed, $$H(U_{\widetilde d}\, (\widetilde X))=\{\xi\in s_{\widetilde d} \, \widetilde X:  \, f^*(\xi)\neq \infty \text{\,\, for all \,} f\in U_{\widetilde d}\, (\widetilde X)\big\}\equiv$$ $$ \hspace{2.1cm} \equiv \{\xi\in s_d X:  \, f^*(\xi)\neq \infty \text{\,\, for all \,} f\in U_{\widetilde d}\, (\widetilde X)|_X\big\}=$$ $$\hspace{3.5cm} =\{\xi\in s_d X:  \, f^*(\xi)\neq \infty \text{\,\, for all \,} f\in U_d(X)\big\}=H(U_d(X)).$$
\end{proof}

\begin{remark} \label{equal Lipschitz realc.} Since  $Lip_d(X)=Lip_{\widetilde d}\, (\widetilde X)|_X$, with an analogous proof to the above we  can also  derive that $H(Lip_{d}(X))$ and  $H(Lip_{\widetilde d}\, (\widetilde X))$ are equivalent realcompactifications of $X$.
\end{remark}

Now, we are interested in knowing  when the Samuel realcompactification of a metric space $(X,d)$ is just its completion $\widetilde X$. Recall that, as we have already mentioned in Example \ref{example 2},   Woods proved in \cite {woods} that   $s_d X=\widetilde X$ if and only if $X$ is a totally bounded metric space, or equivalently $\widetilde X$ is compact. We will see that for $H(U_d(X))$ the condition that appears will be the  Bourbaki-completeness of $\widetilde X$. But, firstly we need the following easy lemma.

\begin{lemma} \label{Bourbaki-bounded sets are equal} Let  $(X,d)$ be a metric space and $(\widetilde X,\widetilde d)$ its completion. A subset  $B\subset X$  is Bourbaki-bounded in $X$ if and only if it  is Bourbaki-bounded in $\widetilde X$.
\end{lemma}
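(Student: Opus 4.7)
The plan is to reduce the lemma immediately to the functional characterization of Bourbaki-boundedness already established in Proposition \ref{Bourbaki-bounded versus uniform functions}, which says that a subset $B$ of a metric space is Bourbaki-bounded in the ambient space if and only if every real-valued uniformly continuous function on that ambient space is bounded on $B$. Applying this on both sides, the lemma will follow once we know that $U_d(X)$ and $U_{\widetilde d}(\widetilde X)$ give the same family of real-valued functions when restricted to $X$.

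The key step is therefore the standard observation that $U_d(X) = U_{\widetilde d}(\widetilde X)|_X$: every $f\in U_d(X)$ admits a unique uniformly continuous extension $\widetilde f\in U_{\widetilde d}(\widetilde X)$, because $\mathbb{R}$ is complete and $X$ is dense in $\widetilde X$; conversely, the restriction to $X$ of every $g\in U_{\widetilde d}(\widetilde X)$ clearly belongs to $U_d(X)$. In particular, for $B\subset X$ and any such pair $f$ and $\widetilde f$, we have $f(B)=\widetilde f(B)$, so one is bounded on $B$ iff the other is. The chain of equivalences is then immediate: $B$ is Bourbaki-bounded in $(X,d)$ iff every $f\in U_d(X)$ is bounded on $B$, iff every $g\in U_{\widetilde d}(\widetilde X)$ is bounded on $B$, iff $B$ is Bourbaki-bounded in $(\widetilde X,\widetilde d)$.

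There is no real obstacle in this argument; the content is entirely in the already-proved Proposition \ref{Bourbaki-bounded versus uniform functions}. Should one prefer a self-contained combinatorial proof, the forward direction is trivial (any cover $B\subset\bigcup_{i=1}^n B_d^m(x_i,\varepsilon)$ in $X$ remains a cover in $\widetilde X$). For the reverse direction, given a cover $B\subset\bigcup_{i=1}^n B_{\widetilde d}^m(y_i,\varepsilon/2)$ in $\widetilde X$, one would approximate each centre $y_i$ by a point $x_i\in X$ with $\widetilde d(y_i,x_i)<\delta$ and, along each $\varepsilon/2$-chain $y_i=u_0,u_1,\dots,u_m=b\in B$ in $\widetilde X$, replace each intermediate vertex $u_k$ by some $v_k\in X$ with $\widetilde d(u_k,v_k)<\delta$; if $\delta$ is chosen small enough that $2\delta+\varepsilon/2<\varepsilon$, consecutive distances in the new chain are less than $\varepsilon$, producing a $d$-chain of length $m$ in $X$ from $x_i$ to $b$ and hence $B\subset\bigcup_{i=1}^n B_d^m(x_i,\varepsilon)$. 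The functional route is cleaner and is the one I would write up.
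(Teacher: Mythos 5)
Your primary argument is exactly the paper's proof: the paper also reduces the lemma to Proposition \ref{Bourbaki-bounded versus uniform functions} together with the identity $U_d(X)=U_{\widetilde d}(\widetilde X)|_X$. The proposal is correct, and the additional combinatorial sketch, while not needed, is a reasonable alternative.
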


\begin{proof} The proof follows at once using Proposition \ref{Bourbaki-bounded versus uniform functions}, and again that $U_d(X)=U_{\widetilde d}\, (\widetilde X)|_X$.
\end{proof}

\begin{proposition} \label{Samuel versus completion} Let  $(X,d)$ be a metric space and $(\widetilde X,\widetilde d)$ its completion. The following are equivalent:
\begin{enumerate}
\item $H(U_d(X))=\widetilde X $.
\item $(\widetilde X, \widetilde d)$ is Bourbaki-complete and  every uniformly discrete subspace of $X$ has non-measurable cardinal.
\item Every Bourbaki-bounded subset of $X$ is totally bounded and  every uniformly discrete subspace of $X$ has non-measurable cardinal.
\end{enumerate}
\end{proposition}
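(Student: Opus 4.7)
The plan is to reduce the whole statement to the Samuel realcompactness of the completion and then identify the two resulting criteria. By Proposition \ref{equal Samuel realc.} the realcompactifications $H(U_d(X))$ and $H(U_{\widetilde d}(\widetilde X))$ of $X$ coincide inside $s_dX \equiv s_{\widetilde d}\widetilde X$. Since $X \subset \widetilde X \subset H(U_{\widetilde d}(\widetilde X))$, the condition $H(U_d(X)) = \widetilde X$ is exactly the condition that $(\widetilde X, \widetilde d)$ be Samuel realcompact, and applying Theorem \ref{characterization Samuel realcompactness} to $(\widetilde X, \widetilde d)$ this in turn is equivalent to $\widetilde X$ being Bourbaki-complete together with every uniformly discrete subspace of $\widetilde X$ having non-measurable cardinal.

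To match this with (2), I would check that the non-measurability hypothesis is equivalent on $X$ and on $\widetilde X$. A uniformly discrete subset of $X$ is automatically one of $\widetilde X$, which gives one direction. For the converse, if $D \subset \widetilde X$ is uniformly discrete with separation $> \varepsilon$, the density of $X$ lets me choose, for each $y \in D$, a point $x_y \in X$ with $\widetilde d(x_y, y) < \varepsilon/3$; the map $y \mapsto x_y$ is injective and the image is uniformly discrete in $X$ with separation $> \varepsilon/3$, and since non-measurability of cardinals passes to subsets, the hypothesis on $X$ pulls back to $\widetilde X$.

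For (2) $\Leftrightarrow$ (3) the cardinality parts already coincide, so I only need $\widetilde X$ Bourbaki-complete iff every Bourbaki-bounded subset of $X$ is totally bounded. By Proposition \ref{Bourbaki-complete and Bourbabi-bounded}, and using that $\widetilde X$ is complete, closed Bourbaki-bounded subsets of $\widetilde X$ are compact exactly when they are totally bounded. The direct implication is then immediate: take $B \subset X$ Bourbaki-bounded, pass to $\widetilde X$ via Lemma \ref{Bourbaki-bounded sets are equal}, and observe that the closure of a Bourbaki-bounded set is again Bourbaki-bounded (replace the radius $\varepsilon$ by $\varepsilon/3$ in the cover and extend each chain by one step), whence $\overline{B}^{\widetilde X}$ is compact and $B$ totally bounded. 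For the converse, given a closed Bourbaki-bounded $C \subset \widetilde X$, take any sequence $(c_n)$ in $C$ and pick $x_n \in X$ with $\widetilde d(x_n, c_n) < 1/n$. Attaching $x_n$ as an extra step to a chain witnessing $c_n \in B^m_{\widetilde d}(y_i, \delta/2)$ shows that $\{x_n\}$ is Bourbaki-bounded in $\widetilde X$, hence in $X$ by Lemma \ref{Bourbaki-bounded sets are equal}; the hypothesis then forces $\{x_n\}$ to be totally bounded, and a Cauchy subsequence of $(x_n)$ produces, via the triangle inequality, a Cauchy subsequence of $(c_n)$ in $\widetilde X$. Thus $C$ is totally bounded, and since $\widetilde X$ is complete, compact.

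The main subtlety I expect is in the converse half of (2) $\Leftrightarrow$ (3): a naive attempt to show that an $\varepsilon$-neighbourhood $\{x \in X : \widetilde d(x,C) < \varepsilon\}$ is Bourbaki-bounded in $X$ breaks down (as one sees by taking $\widetilde X$ to be an infinite discrete space and $C$ a singleton), because one cannot in general subdivide a single $\varepsilon$-step into smaller pieces within $\widetilde X$. Routing the argument through a sequential reformulation of total boundedness, together with Lemma \ref{Bourbaki-bounded sets are equal}, is what lets the density of $X$ in $\widetilde X$ interact cleanly with Bourbaki-boundedness and avoids this obstacle.
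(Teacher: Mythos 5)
Your proposal is correct and follows essentially the same route as the paper: reduce (1) to the Samuel realcompactness of $(\widetilde X,\widetilde d)$ via Proposition \ref{equal Samuel realc.} and Theorem \ref{characterization Samuel realcompactness} (transferring the non-measurability hypothesis between $X$ and $\widetilde X$), and handle (2)$\Leftrightarrow$(3) through Lemma \ref{Bourbaki-bounded sets are equal} together with the $1/n$-approximation of points of $\widetilde X$ by points of $X$. The only cosmetic difference is that in (3)$\Rightarrow$(2) the paper argues directly with a Bourbaki-Cauchy sequence whereas you route the same density argument through the characterization of Bourbaki-completeness in Proposition \ref{Bourbaki-complete and Bourbabi-bounded}.
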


\begin{proof}  The equivalence $(1) \Leftrightarrow (2)$ follows by using properly Proposition \ref{equal Samuel realc.},   Theorem   \ref{characterization Samuel realcompactness},  and taking into account that  every uniformly discrete subspace of $X$ has non-measurable cardinal iff  the same is true for $\widetilde X$.

$(2) \Rightarrow (3)$ If $B$ is a Bourbaki-bounded subset of $X$, it is easy to see that ${\rm cl}_{\widetilde X} B$ is  Bourbaki-bounded in $\widetilde X$. Now, from Proposition \ref{Bourbaki-complete and Bourbabi-bounded}, ${\rm cl}_{\widetilde X} B$ is  compact, and then $B$ is totally bounded.

$(3) \Rightarrow (2)$ To see that $\widetilde X$ is Bourbaki-complete we will use the (equivalent) definition of this property  given by sequences. So, let  $(y_n)_n$ be a Bourbaki-Cauchy sequence in $\widetilde X$. For each $n\in \mathbb N$, choose $x_n\in X$ with $\widetilde d(x_n, y_n)<1/n$. It is not difficult to check  that  $B=\{x_n: n\in \mathbb N\}$ is in fact a Bourbaki-bounded subset in $\widetilde X$. Then, from  Lemma \ref{Bourbaki-bounded sets are equal}, $B$ is Bourbaki-bounded in $X$, and by condition $(3)$ $B$ is totally bounded. By completeness, we deduce that  ${\rm cl}_{\widetilde X} B$ is compact and therefore the sequence $(x_n)_n$ clusters in $\widetilde X$. Clearly, the same happens with the sequence $(y_n)_n$, as we wanted.
\end{proof}

\smallskip

Next, we are going to  characterize the metrizability of the Samuel realcompactification. For that we will use another result by Woods asserting  that   $s_d X$ is metrizable if and only if $X$ is a totally bounded metric space (\cite{woods}). Moreover  we will need  the following lemma that can be also seen in \cite{woods}.

\begin{lemma} {\rm ({\sc Woods} \cite{woods})} \label{Samuel compactification of any B} Let  $(X,d)$ be a metric space and $B\subset X$. Then $s_d B$, the Samuel compactification of $B$,  and ${\rm cl}_{s_d X}B$ are equivalent compactifications of $B$.
\end{lemma}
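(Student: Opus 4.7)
The plan is to prove equivalence by exhibiting continuous surjections $\phi_1: {\rm cl}_{s_d X} B \to s_d B$ and $\phi_2: s_d B \to {\rm cl}_{s_d X} B$, each fixing $B$ pointwise, and then observing that both compositions coincide with the identity on the dense subspace $B$ and hence, since the ambient spaces are Hausdorff, are the identities themselves, making the two maps mutually inverse homeomorphisms. Throughout I rely on the characterization, recalled in Section 1, of $s_d B = H(U_d^*(B))$ as the smallest compactification of $B$ to which every $f \in U_d^*(B)$ extends continuously; in particular, whenever another compactification $\alpha B$ has this extension property, there is a continuous map $\alpha B \to s_d B$ fixing $B$.

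For $\phi_1$, I would show that ${\rm cl}_{s_d X} B$ has the extension property for $U_d^*(B)$. By Kat\v{e}tov's theorem on extensions of bounded uniformly continuous functions from subspaces of a metric space, every $f \in U_d^*(B)$ admits an extension $\widetilde f \in U_d^*(X)$. By the defining property of $s_d X$, the function $\widetilde f$ extends continuously to $s_d X$, and restricting this extension to the closed (hence compact) subspace ${\rm cl}_{s_d X} B$ yields a continuous extension of $f$. The smallest-compactification property of $s_d B$ then produces $\phi_1$.

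For $\phi_2$, the cleanest route is via the universal property of the Samuel compactification: the inclusion $B \hookrightarrow s_d X$ is uniformly continuous with respect to the unique compatible uniformity on the compact Hausdorff space $s_d X$, so it extends to a continuous map $\phi_2: s_d B \to s_d X$. Using density of $B$ in $s_d B$ together with compactness, one sees that $\phi_2(s_d B) = \phi_2({\rm cl}_{s_d B} B) \subset {\rm cl}_{s_d X}(\phi_2(B)) = {\rm cl}_{s_d X} B$, while the reverse inclusion follows from compactness of $\phi_2(s_d B) \supset B$. Thus $\phi_2$ is a continuous surjection onto ${\rm cl}_{s_d X} B$ fixing $B$ pointwise. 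Alternatively, $\phi_2$ can be manufactured explicitly using the embedding $s_d B = H(U_d^*(B)) \subset \Rset^{U_d^*(B)}$ together with the restriction homomorphism $U_d^*(X) \to U_d^*(B)$, whose surjectivity is exactly the Kat\v{e}tov extension invoked in the construction of $\phi_1$.

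The principal technical ingredient is Kat\v{e}tov's extension theorem for bounded uniformly continuous functions; everything else is formal manipulation with the universal and defining properties of the Samuel compactification. I expect no real obstacle beyond carefully identifying the image of $\phi_2$, since the equivalence of the two compactifications follows immediately from the fact that continuous maps on Hausdorff spaces agreeing on a dense subset must coincide.
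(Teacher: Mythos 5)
The paper offers no proof of this lemma: it is imported verbatim from Woods \cite{woods}, so there is no internal argument to compare against. Your proof is correct and self-contained, and it follows what is essentially the standard route. The one substantive ingredient is correctly isolated: every $f\in U_d^*(B)$ extends to some $\widetilde f\in U_d^*(X)$ (Kat\v{e}tov's theorem; for metric spaces one can also argue elementarily via $\widetilde f(x)=\inf_{b\in B}\bigl(f(b)+\widehat{\omega}(d(x,b))\bigr)$ with $\widehat{\omega}$ a concave majorant of the modulus of continuity of $f$, truncated to preserve boundedness). Granting that, the construction of $\phi_1$ from the minimality of $H(U_d^*(B))$ recalled in Section 1 is exactly right, and for $\phi_2$ either of your two constructions works --- the homomorphism version (composing $\xi\in H(U_d^*(B))$ with the restriction map $U_d^*(X)\to U_d^*(B)$) is perhaps the one most in the spirit of the paper's $H(\mathcal L)$ formalism, and your identification of the image of $\phi_2$ via compactness and density is the right bookkeeping. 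Two small remarks: once $\phi_1$ and $\phi_2$ exist and fix $B$ pointwise, the equivalence of the two compactifications already follows from the definition of the order on compactifications given in Section 1, so the final step about the compositions being identities is only needed if you want the explicit homeomorphism; and your whole argument can be compressed into the single statement that the uniform closure of $U_d^*(X)|_B$ in $C^*(B)$ equals $U_d^*(B)$, so that the two compactifications are generated by the same unital vector lattice of bounded functions on $B$ and are therefore equivalent.
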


\begin{theorem} \label{metrizability of Samuel} Let $(X,d)$ be a metric space and  $(\widetilde X,\widetilde d)$ its completion. Then $H(U_{d}(X))$ is metrizable if and only if
$H(U_d(X))= \widetilde X$.
\end{theorem}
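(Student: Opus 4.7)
My plan is to reduce the forward direction to Proposition \ref{Samuel versus completion}, which characterizes $H(U_d(X)) = \widetilde X$ by the two conditions that every Bourbaki-bounded subset of $X$ is totally bounded and every uniformly discrete subspace of $X$ has non-measurable cardinal. The backward implication is immediate, since $\widetilde X$ is a metric space, so the entire task is to deduce these two conditions from the metrizability of $H(U_d(X))$.

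For the first condition, let $B \subset X$ be Bourbaki-bounded. By Proposition \ref{characterization-bourbaki-bounded}, $B$ is $\rho$-bounded for every metric $\rho$ uniformly equivalent to $d$, hence $\mathrm{cl}_{s_d X} B \subset H(Lip_\rho(X))$ for each such $\rho$, and Theorem \ref{Samuel as supremum Lipschitz} gives $\mathrm{cl}_{s_d X} B \subset H(U_d(X))$. This subspace inherits metrizability from $H(U_d(X))$, and by Lemma \ref{Samuel compactification of any B} it is equivalent to the Samuel compactification $s_d B$. Woods' theorem (invoked just before the statement of Theorem \ref{metrizability of Samuel}) that $s_d B$ is metrizable exactly when $B$ is totally bounded then forces total boundedness of $B$.

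For the cardinality condition, let $D \subset X$ be uniformly discrete with separation $\delta>0$. My plan is to prove that $D$ is a \emph{closed discrete} subspace of $H(U_d(X))$; since $H(U_d(X))$ is closed in $\mathbb R^{U_d(X)}$ and hence realcompact, and closed subspaces of realcompact spaces are realcompact, $D$ will then be a realcompact discrete space and so must have non-measurable cardinal. To see closedness, take any sequence $(d_n)$ of distinct points of $D$ and define
$$f(x) = \sum_{n \in \Nset} (-1)^n \max\bigl(0,\, 1 - (4/\delta)\, d(x, d_n)\bigr).$$
Because the balls $B_d(d_n, \delta/4)$ are pairwise disjoint by uniform discreteness, $f$ is well-defined, bounded by $1$, and (by a short case analysis on $(x,y)$) is $(4/\delta)$-Lipschitz, so $f \in U_d(X)$. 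Since $f(d_n) = (-1)^n$, the sequence $\bigl(f(d_n)\bigr)$ does not converge, and therefore $(d_n)$ cannot converge in $H(U_d(X))$ (else its continuous extension $f^*$ would furnish a limit). Because $H(U_d(X))$ is metrizable and hence sequential, $D$ has no limit points in $H(U_d(X))$, i.e. it is closed and discrete there.

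The delicate point is the Lipschitz verification, specifically the case where $x$ lies in some bump's support and $y$ lies outside every support: uniform discreteness is what forces the slopes of the individual bumps to interlock with the lower bound on $d(x,y)$ and keeps $f$ globally Lipschitz with a single constant. Once both conditions are secured, Proposition \ref{Samuel versus completion} yields $H(U_d(X)) = \widetilde X$ and completes the proof.
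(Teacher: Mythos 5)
Your proposal is correct. The treatment of the Bourbaki-bounded condition is essentially the paper's argument (run directly rather than by contradiction): both proofs place $\mathrm{cl}_{s_dX}B$ inside $H(U_d(X))$, identify it with $s_dB$ via Lemma \ref{Samuel compactification of any B}, and invoke Woods' metrizability criterion --- the only cosmetic difference is that you obtain the inclusion $\mathrm{cl}_{s_dX}B\subset H(U_d(X))$ through Proposition \ref{characterization-bourbaki-bounded} and Theorem \ref{Samuel as supremum Lipschitz}, whereas the paper uses Proposition \ref{Bourbaki-bounded versus uniform functions} (uniformly continuous functions are bounded on $B$, so $f^*(\xi)\neq\infty$ on $\mathrm{cl}_{s_dX}B$). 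Where you genuinely diverge is the cardinality condition. The paper argues that a metrizable realcompact space has all points $G_\delta$ and is therefore hereditarily realcompact, so $\widetilde X\subset H(U_d(X))$ is realcompact and its closed discrete subspaces (in particular the uniformly discrete subspaces of $X$, which are complete and hence closed in $\widetilde X$) have non-measurable cardinal. You instead show each uniformly discrete $D$ is itself closed and discrete in $H(U_d(X))$, by exhibiting for every sequence of distinct points of $D$ an oscillating $(4/\delta)$-Lipschitz function that obstructs convergence and then using sequentiality; closedness in the realcompact space $H(U_d(X))$ then gives the conclusion. Your Lipschitz verification is sound (the key inequality $d(x,y)\geq \delta/4-d(x,d_n)$ when $y$ misses every support, and $d(x,y)\geq\delta/2$ when $x,y$ lie in distinct supports, both deliver the constant $4/\delta$). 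What the paper's route buys is brevity, at the cost of citing the hereditary-realcompactness fact from Gillman--Jerison; what yours buys is a self-contained, explicit argument that needs only the realcompactness of $H(U_d(X))$ itself and makes visible exactly where metrizability (through sequentiality) enters.
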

\begin{proof} Clearly if $H(U_d(X))= \widetilde X$, the Samuel realcompactification of $X$ is metrizable.

Conversely, suppose  $H(U_d(X))$ is metrizable. Since  any realcompact space where every point is $G_\delta$ is hereditary realcompact (see \cite{gillman}), it follows that   $\widetilde X\subset H(U_d(X))$ is realcompact.  Therefore any uniformly discrete subspace of $\widetilde X$ has non-measurable cardinal. Now, if  $H(U_d(X))\neq \widetilde X$, then from above Proposition \ref{Samuel versus completion}, there exists some $B\subset X$ which is Bourbaki-bounded in $X$ but not totally bounded. According to the above mentioned result by Woods, we know that the Samuel compactification  of $B$, i.e. $s_d B$ is not metrizable. We are going to see that in fact $s_d B$ is a subspace of  $H(U_d(X))$, and therefore  $H(U_d(X))$ can not be metrizable.

Now, according to the equivalence $s_d B\equiv{\rm cl}_{s_d X}B$ (Lemma \ref{Samuel compactification of any B}), we finish if we prove  that ${\rm cl}_{s_d X}B$ is in fact contained in  $H(U_d(X))$. For that, take $\xi\in {\rm cl}_{s_d X}B$. To see that $\xi  \in H(U_d(X))$ it is enough to make sure that $f^*(\xi)\neq \infty$, for every $f\in U_d(X)$. But this is clear since every uniformly continuous function $f$ must be bounded on the Bourbaki-bounded set $B$ (Proposition \ref{Bourbaki-bounded versus uniform functions}).
\end{proof}

We finish this section with an analogous result to the above for  $H(Lip_d(X))$.

\begin{theorem} Let $(X,d)$ be a metric space and  $(\widetilde X,\widetilde d)$  its completion. Then $H(Lip_{d}(X))$ is metrizable if and only if
$H(Lip_d(X))= \widetilde X$.
\end{theorem}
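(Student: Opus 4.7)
The easy direction is immediate: if $H(Lip_d(X))=\widetilde X$, then $H(Lip_d(X))$ carries the metric $\widetilde d$ and is therefore metrizable. All the content lies in the converse, and my plan is to mimic the argument of Theorem \ref{metrizability of Samuel}, replacing the Samuel realcompactification by the Lipschitz one and combining three ingredients already at hand: the $\sigma$-compact description of $H(Lip_d(X))$ from Proposition \ref{characterization-H(Lip_d(X))}, the identification of closures in $s_dX$ with Samuel compactifications of subspaces (Lemma \ref{Samuel compactification of any B}), and the Heine--Borel characterization of Lipschitz realcompactness (Proposition \ref{Lipschitz-realcompact}), together with the passage to the completion via Remark \ref{equal Lipschitz realc.}.

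Assume that $H(Lip_d(X))$ is metrizable, fix $x_0\in X$, and write
\[
H(Lip_d(X))=\bigcup_{n\in\Nset}\mathrm{cl}_{s_dX}B_d[x_0,n].
\]
Each summand is closed in the metrizable space $H(Lip_d(X))$, so each $\mathrm{cl}_{s_dX}B_d[x_0,n]$ is a metrizable compactum. By Lemma \ref{Samuel compactification of any B}, this compactum is equivalent (as a compactification of $B_d[x_0,n]$) to the Samuel compactification $s_dB_d[x_0,n]$. Invoking Woods' theorem that the Samuel compactification of a metric space is metrizable if and only if the space is totally bounded, I conclude that every closed ball $B_d[x_0,n]$, and hence every $d$-bounded subset of $X$, is totally bounded.

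Next I want to transfer this property to $\widetilde X$: namely, I claim that $\widetilde X$ satisfies the Heine--Borel property. Given a closed $\widetilde d$-bounded $C\subset\widetilde X$, pick $n$ with $C\subset B_{\widetilde d}[x_0,n]$. For any $y\in B_{\widetilde d}[x_0,n]$, density of $X$ in $\widetilde X$ yields a sequence $(x_k)\subset X$ with $x_k\to y$, and eventually $x_k\in B_d[x_0,n+1]$; hence $B_{\widetilde d}[x_0,n]\subset\mathrm{cl}_{\widetilde X}B_d[x_0,n+1]$. Since $B_d[x_0,n+1]$ is totally bounded in $X$ (and thus in $\widetilde X$), its closure in the complete space $\widetilde X$ is compact, and so $C$ is a closed subset of a compact, therefore compact itself. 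Proposition \ref{Lipschitz-realcompact} then gives $\widetilde X=H(Lip_{\widetilde d}(\widetilde X))$, and Remark \ref{equal Lipschitz realc.} closes the loop: $H(Lip_d(X))=H(Lip_{\widetilde d}(\widetilde X))=\widetilde X$.

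The only point that requires any real care, and the one I view as the main (minor) obstacle, is the inclusion $B_{\widetilde d}[x_0,n]\subset\mathrm{cl}_{\widetilde X}B_d[x_0,n+1]$ used above; everything else consists of plugging the hypothesis into the structural results already established. In particular, the Woods metrizability criterion for $s_dX$ is the true mechanism that forces total boundedness of the balls, just as in Theorem \ref{metrizability of Samuel}, and Remark \ref{equal Lipschitz realc.} removes any need to treat $X$ and $\widetilde X$ separately.
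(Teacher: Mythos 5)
Your proposal is correct and follows essentially the same route as the paper: both hinge on identifying $\mathrm{cl}_{s_dX}B_d[x_0,n]$ with $s_dB_d[x_0,n]$ via Lemma \ref{Samuel compactification of any B} and then invoking Woods' metrizability criterion to force total boundedness of the closed balls. The only (immaterial) difference is in the endgame: the paper notes directly that each $s_dB_d[x_0,n]$ is the completion of the ball and hence sits inside $\widetilde X$, while you route through the Heine--Borel property of $\widetilde X$ together with Proposition \ref{Lipschitz-realcompact} and Remark \ref{equal Lipschitz realc.}; both finishes are valid.
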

\begin{proof} One implication is clear. To the converse, suppose $H(Lip_d(X))$ is metrizable, then for every $x\in  X$ and $\varepsilon > 0$  we have that ${\rm cl}_{s_d X} (B_d[x, \varepsilon])\equiv s_d B_d[x, \varepsilon]$ (Lemma \ref{Samuel compactification of any B}) is metrizable. Hence again by Woods, we follow that every closed  ball in $X$ is totally bounded and also that $s_d B[x, \varepsilon]$ is just its completion. Since the completion of every set in $X$ is clearly contained in $\widetilde X$, then  we have that $\widetilde X\subset H(Lip_d(X)) = \bigcup_{n\in \mathbb N} {\rm cl}_{s_d X} (B[x, n])  \subset  \widetilde X$, as we wanted.
\end{proof}

\smallskip

\section{Some results related to bornologies}

\smallskip

In this section we are going to summarize, in a synoptic table, some results about realcompactifications in terms of some kind of bornologies that we can consider in a metric space. Recall that a family $\bf B$ of subsets of a non-empty set $X$ is said to be  a {\it bornology} in $X$ when it satisfies the following conditions:
(i) For every $x\in X$, the set  $\{x\}\in \rm\bf B$; (ii) If $B\in \bf B$ and $A\subset B$, then $A\in\bf B$ and (iii) If $A, B\in\bf B$, then $A\cup B\in \bf B$.  Moreover, if $X$ is a topological space, we say that ${\bf B}$ is a  {\it closed} bornology when, (iv) If $B\in \bf B$ then its closure $\overline{B}\in \bf B$.

\smallskip

Thus, if we denote by  $\textbf{B}_{d}(X)$ the $d$-bounded subsets,  $\textbf{TB}_{d}(X)$ the totally-bounded subsets, $\textbf{BB}_{d}(X)$ the Bourbaki-bounded subsets,  $\textbf{CB}_d(X)$ the so-called compact  bornology, i.e., the subsets in $X$ with compact closure, and finally by $\textbf{P}(X)$ the usual power set of $X$,  then it is easy to check that all these families are closed bornologies in the metric space $(X,d)$. And, clearly, we have that, $$\textbf{CB}_d(X) \subset \textbf{TB}_d(X)\subset \textbf{BB}_d(X)\subset \textbf{B}_d(X) \subset \textbf{P} (X).$$

In general, these families are different from each other. For instance, if we take $X=\mathbb Q \times \ell _2$, endowed with the metric $d=\sup \{d^*, \|\cdot\|\}$  where $d^*=\inf\{1, d_{u}\}$, then all of these bornologies on  this metric space are different (see \cite{merono1}).

Our objective here is to show  that an equality between the above  bornologies, provides an  equality between the following topological spaces, $$X\subset \widetilde X\subset H(U_d (X))\subset H(Lip_d (X)) \subset s_d X.$$

In the next double entry table we are going to collect the results in the line mentioned above. Note that this table must be read as follows. The numbers {\bf(1)}, {\bf(2)}, ...,  denote each of the ordered (by inclusion)  bornologies $\textbf{CB}_d(X),  \textbf{TB}_d(X), ...$. Thus, an space in the first column is equal to an space in the first
row if and only if it is true the equality of the bornologies appearing in the box formed by the intersection
of the corresponding row and column where they are. For instance,  $X=\widetilde X$ if and only if ${\bf (1)=(2)}$, i.e, every totally bounded subset if $X$ has compact closure. Note that for the (two) results in the column under the space
$H(U_d(X))$ we need to suppose some  additional condition of non-measurable cardinality since we are applying Theorem \ref{characterization Samuel realcompactness}. Namely, we  will denote  by  {\small $\spadesuit$}, the condition on $X$ that every uniformly discrete subspace has non-measurable cardinal or equivalently in this setting every uniform partition of $X$ has non-measurable cardinal (see Remark \ref{remark-->uniform partition}).
\bigskip

{\centerline{
\begin{tabular} { |c||c|c|c|c| }
\hline
&  &  &   &   \\
& $\widetilde X$  & $H(U_d(X))$ & $H(Lip_d(X))$  & $s_d X$ \\
\hline
\hline
&  & \hspace{3cm}{\small $\spadesuit$} &   &   \\
$X$ & \,\,\,\, \bf {(1)=(2)} \,\,\,\,  & \bf{(1)=(3)} & \bf{(1)=(4)}  & \bf{(1)=(5)} \\
& \,\,\,\,\,  {\small $X$ complete} \,\,\,\,\, & {\small  $X$ Bourbaki-complete} &  \,\,\,\,\, {\small  $X$  Heine-Borel} \,\,\,\,\,\,  & {\small $X$  compact}   \\
\hline
&  & \hspace{3cm}{\small $\spadesuit$} &   &   \\
$\widetilde X$ &  & \bf {(2)=(3)} & \bf {(2)=(4)}  & \bf {(2)=(5)} \\
& & {\small $\widetilde X$ Bourbaki-complete} & {\small $\widetilde X$ Heine-Borel}  & {\small $X$  totally-bounded}   \\
\hline
&  &  &   &   \\
$H(U_d(X))$ &  &  & \bf {(3)=(4)}  & \bf {(3)=(5)} \\
& &  &   & {\small $X$  Bourbaki-bounded}   \\
\hline
&  &  &   &   \\
$H(Lip_d(X))$ &  &  &   & \bf {(4)=(5)} \\
& &  &   & {\small $X$ bounded}   \\
\hline
\end{tabular}}}

\bigskip

\medskip

We will prove all the results contained in this table, assuming {\small $\spadesuit$} when we apply Theorem \ref{characterization Samuel realcompactness}.

\smallskip

\begin {enumerate}
\item [$\bullet$] $X=\widetilde X$ $\Longleftrightarrow$  \fbox{$X$ is complete} $\Longleftrightarrow$   $\textbf{CB}_d(X) =  \textbf{TB}_d(X)$.

\smallskip

\item [$\bullet$] $X= H(U_d(X))$ $\Longleftrightarrow$  $X$ is Samuel realcompact  $\stackrel{(\text{Th. }
\ref{characterization Samuel realcompactness})}{\Longleftrightarrow}$  \fbox{$X$ is Bourbaki-complete} $\stackrel{(\text{Prop. }
\ref{Bourbaki-complete and Bourbabi-bounded})}{\Longleftrightarrow}$ $\textbf{CB}_d(X) =  \textbf{BB}_d(X)$.

\smallskip

\item [$\bullet$]  $X= H(Lip_d(X))$ $\Longleftrightarrow$  $X$ is Lipschitz realcompact  $\stackrel{(\text{Prop. } \ref{Lipschitz-realcompact})}{\Longleftrightarrow}$
    \fbox{$X$ is Heine-Borel} $\Longleftrightarrow$ $\textbf{CB}_d(X) =  \textbf{B}_d(X)$.

\medskip

\item [$\bullet$] $X=s_d X$ $\Longleftrightarrow$  \fbox{$X$ is compact}  $\Longleftrightarrow$ $\textbf{CB}_d(X) =  \textbf{P}(X)$.

\smallskip

\item [$\bullet$] $\widetilde X=H(U_d(X))$ $\stackrel{(\text{Th. } \ref{equal Samuel realc.})}{\Longleftrightarrow}$ $\widetilde X=H(U_{\widetilde d}(\widetilde X))$ $\Longleftrightarrow$ $\widetilde X$ is Samuel realcompact  $\stackrel{(\text{Th. }
    \ref{characterization Samuel realcompactness})}{\Longleftrightarrow}$

    \fbox{$\widetilde X$ is Bourbaki-complete} $\stackrel{(\text{Prop. }
    \ref{Samuel versus completion})} {\Longleftrightarrow}$  $\textbf{TB}_d(X) = \textbf{BB}_d(X)$.

\smallskip

\item [$\bullet$]  $\widetilde X= H(Lip_d(X))$  $\stackrel{(\text{Rem. } \ref{equal Lipschitz realc.})}{\Longleftrightarrow}$ $\widetilde X=H(Lip_{\widetilde d}(\widetilde X))$ $\Longleftrightarrow$ $\widetilde X$ is Lipschitz realcompact  $\stackrel{(\text{Prop. } \ref{Lipschitz-realcompact})}{\Longleftrightarrow}$

    \fbox{$\widetilde X$ is Heine-Borel} $\Longleftrightarrow$ $\textbf{TB}_d(X) =  \textbf{B}_d(X)$.

\smallskip

\item [$\bullet$] $\widetilde X=s_d X$ $\Longleftrightarrow$  $\widetilde X=s_{\widetilde d} \widetilde X$ $\Longleftrightarrow$ $\widetilde X$ is compact $\Longleftrightarrow$ \fbox{$X$ is totally bounded}  $\Longleftrightarrow$ $\textbf{TB}_d(X) =  \textbf{P}(X)$.

\smallskip

\item [$\bullet$]  $H(U_d(X))= H(Lip_d(X))$  $\stackrel{(\text{Prop. } \ref{Samuel real. = Lipschitz real.})}{\Longleftrightarrow}$
$\textbf{BB}_d(X) =  \textbf{B}_d(X)$.

\smallskip

\item [$\bullet$]  $H(U_d(X))= s_d X$  $\stackrel{(\text{Prop. } \ref{Samuel realcompactification = Samuel compactification})}{\Longleftrightarrow}$\fbox{$X$ is Bourbaki-bounded}  $\Longleftrightarrow$
$\textbf{BB}_d(X) =  \textbf{P}(X)$.

\smallskip

\item [$\bullet$]  $H(Lip_d(X))= s_d X$  $\stackrel{(\text{Prop. }
\ref{Lipschitz realcompact. = Samuel compact.})}{\Longleftrightarrow}$ \fbox{$X$ is bounded}  $\Longleftrightarrow$
$\textbf{B}_d(X) =  \textbf{P}(X)$.

\end {enumerate}

\medskip

Note that in each box of the  table,  it also appears some internal property of the metric space $X$ or of its completion $\widetilde X$ characterizing the corresponding situation,  except for the case $H(U_d(X))= H(Lip_d(X))$. In fact, we wonder if there exists some known property defined for metric spaces that determines this equality, or equivalently, the equality  $\textbf{BB}_d(X) =  \textbf{B}_d(X)$.  Examples of such spaces are of course any metric space with the Heine-Borel property, and also the so-called
small-determined metric spaces introduced in [7]. As we have already mentioned in Section 1, these small-determined spaces, that includes all the normed spaces and  all the length spaces, are characterized by the fact that every real-valued uniformly continuous function can be uniformly approximated by Lipschitz functions. Then clearly for these spaces the Samuel and the Lispchitz realcompactifications coincide, but  the converse is not true. For instance, if we take the set of natural numbers $\Nset$ endowed with the usual metric, then the space $\Nset \times \ell _2$ satisfies that every bounded set is Bourbaki-bounded, but it is not small-determined neither it satisfies the Heine-Borel property. We refer to the paper \cite{merono3} where we see that spaces for which $\textbf{BB}_d(X) =  \textbf{B}_d(X)$ have the property that every uniform partition is in fact countable, and also that they are properly located between  two known classes of metric spaces, namely the above mentioned small-determined spaces  and the so-called $B$-simple spaces introduced by Hecjman in \cite{hejcman2}.

\smallskip

\section{Samuel and  Hewitt-Nachbin realcompactifications}

\smallskip

As we have seen along the paper  $\upsilon X$ and $H(U_d(X))$  can be different realcompactifications for the metric space $X$. Then, a natural question is to know when they are equivalent realcompactifications. We know that $\upsilon X\geq H(U_d (X))$ considering the usual order in the family of realcompactifications. In principle, these realcompactifications live in different compactifications, namely $\beta X$ and $s_d X$, respectively, then the comparison between them may not be equivalent to the corresponding inclusion. Nevertheless, we are going to see that in fact both live in  $s_d X$, and then we will derive that  $\upsilon X\subset H(U_d(X))$.

\smallskip

First of all recall that, for every Tychonoff space $X$,   $\upsilon X$ is the $G_\delta$-closure of $X$ in $\beta X$ (see for instance \cite{gillman}). On the other hand, we say that the  topological space $X$ is  {\it $z$-embedded} in the space $Y$, whenever $X\subset Y$ and each zero-set of $X$ is the restriction to $X$ of a zero-set in $Y$. For instance, any metric space $(X,d)$ is clearly  $z$-embedded  in $s_d X$ and therefore in any $Y$ with $X\subset Y\subset s_d X$. Moreover, in connection with this notion, we will use a result by Blair and Hager    saying that under $G_\delta$-density assumption, $z$-embedding and $C$-embedding are equivalent properties (see \cite{blair-hager}). Then with all of these ingredients we have the following.

\begin{proposition} \label{upsilon subset Samuel} Let $(X,d)$ be a metric space. Then we have,
\begin{enumerate}

\item [(i)] $\upsilon X$ is a realcompactification of $X$ equivalent to the  $G_\delta$-closure of $X$ in $s_d X$.

\item [(ii)] $\upsilon  X\subset H(U_d(X))$.

\item [(iii)] $X$ is realcompact if and only if $X$ is $G_\delta$-closed in $s_d X$.

\end{enumerate}

\end{proposition}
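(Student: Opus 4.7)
The plan is to prove the three statements in order, with (i) carrying the main content.

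For (i), let $G$ denote the $G_\delta$-closure of $X$ in $s_dX$, and aim to show $G\equiv\upsilon X$ as realcompactifications of $X$. The excerpt records that $X$ is $z$-embedded in $s_dX$: for any closed set $F\subset X$, the bounded uniformly continuous function $\min\{g_F,1\}$ extends to $s_dX$ and has zero-set meeting $X$ exactly in $F$. Hence $X$ is also $z$-embedded in $G$, and by construction $G_\delta$-dense in $G$; the Blair--Hager theorem invoked immediately before the proposition then upgrades this to $C$-embedding, so every $f\in C(X)$ has a unique continuous extension $f^G:G\to\mathbb R$.

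I would then construct mutually inverse homeomorphisms between $G$ and $\upsilon X$. In one direction, the universal property of $\beta X$ yields a canonical continuous surjection $\pi:\beta X\to s_dX$ fixing $X$ pointwise; since $\pi$ pulls $G_\delta$-sets of $s_dX$ back to $G_\delta$-sets of $\beta X$, the classical identification of $\upsilon X$ as the $G_\delta$-closure of $X$ in $\beta X$ immediately gives $\pi(\upsilon X)\subset G$. In the other direction, define $e:G\to H(C(X))\equiv\upsilon X$ by $e(p)(f)=f^G(p)$; then $e(p)$ is a unital vector lattice homomorphism (since $f\mapsto f^G$ preserves algebraic and lattice operations by uniqueness of continuous extensions) and $e$ is continuous coordinate-wise. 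Both compositions $\pi|_{\upsilon X}\circ e$ and $e\circ\pi|_{\upsilon X}$ fix $X$ pointwise; by density of $X$ in $G$ and in $\upsilon X$, combined with Hausdorffness of the targets, they equal the respective identities. Thus $\pi|_{\upsilon X}:\upsilon X\to G$ is a homeomorphism fixing $X$, which establishes (i).

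For (ii), identify $\upsilon X$ with $G\subset s_dX$ via (i). Given $f\in U_d(X)\subset C(X)$, the Samuel extension $f^*:s_dX\to\mathbb R\cup\{\infty\}$ and the Hewitt--Nachbin extension $f^\upsilon:\upsilon X\to\mathbb R$ are two continuous extensions of $f$ from the dense subset $X$; uniqueness of continuous extensions into the Hausdorff space $\mathbb R\cup\{\infty\}$ forces $f^*|_{\upsilon X}=f^\upsilon$, so $f^*$ is finite throughout $\upsilon X$. The description $H(U_d(X))=\{\xi\in s_dX:f^*(\xi)\neq\infty\text{ for all }f\in U_d(X)\}$ then yields $\upsilon X\subset H(U_d(X))$. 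Part (iii) is immediate from (i): $X$ is realcompact iff $X=\upsilon X$, which by (i) holds iff $X$ coincides with its $G_\delta$-closure in $s_dX$, i.e.\ iff $X$ is $G_\delta$-closed in $s_dX$.

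The main obstacle is verifying in (i) that $\pi|_{\upsilon X}$ and $e$ are mutually inverse homeomorphisms. This rests entirely on the uniqueness of continuous extensions from a dense subset into a Hausdorff space, applied in parallel to the full family $C(X)$ (for $e$) and to the smaller family $U_d^*(X)$ implicit in the construction of $\pi$. An alternative route would bypass $\pi$ by arguing directly that $G$ is realcompact and then appealing to the universal property of $\upsilon X$, but this seems to require the general fact that $G_\delta$-closed subspaces of compactifications are realcompact, which is known but less transparent in the present setting.
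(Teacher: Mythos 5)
Your proposal is correct, and parts (ii) and (iii) run essentially as in the paper; the genuine divergence is in how you establish the equivalence in (i). Both arguments start identically: $X$ is $z$-embedded in $s_dX$ (your explicit witness $\min\{g_F,1\}$ for a closed $F$ is a nice touch, since zero-sets of a metric space are exactly its closed sets), hence $z$-embedded and $G_\delta$-dense in the $G_\delta$-closure $G$, so Blair--Hager gives $C$-embedding. From there the paper takes the abstract route: it quotes that a $G_\delta$-closed subspace of a compact space is realcompact, so $G$ is a realcompactification of $X$ in which $X$ is $C$-embedded, and then invokes the uniqueness (up to equivalence) of the realcompactification with that property to conclude $G\equiv\upsilon X$. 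You instead build the equivalence by hand: the canonical projection $\pi\colon\beta X\to s_dX$ carries $\upsilon X$ (the $G_\delta$-closure of $X$ in $\beta X$) into $G$ because preimages of $G_\delta$-sets are $G_\delta$-sets, the evaluation map $e(p)(f)=f^G(p)$ lands in $H(C(X))\equiv\upsilon X$, and density of $X$ plus Hausdorffness forces the two composites to be identities. Your route avoids citing both the realcompactness of $G_\delta$-closed subspaces and the uniqueness of the $C$-embedded realcompactification (realcompactness of $G$ comes out for free from the homeomorphism), at the cost of the routine but nontrivial verifications that $e(p)$ is a unital vector lattice homomorphism and that the composites fix $X$; the paper's version is shorter but leans on more quoted machinery. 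Both are sound.
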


\begin{proof} Let $Y$ be the $G_\delta$-closure of $X$ in  $s_d X$. Then $Y$ is a realcompact space since it is known that every $G_\delta$-closed subspace of a (real)compact space is also realcompact (see \cite{gillman}). In particular, we have that $Y$ is a realcompactification of $X$ where it is $G_\delta$-dense. As we have said before, $X$ is clearly $z$-embedded in $s_d X$ and also in $Y$. Now, by using the above mentioned result by Blair and Hager (\cite{blair-hager}), we have that $X$ must be $C$-embedded in $Y$.  And we finish, since $\upsilon  X$ is the unique (up equivalence) realcompactification of $X$ where it is $C$-embedded (see \cite{engelkingbook}). Therefore $Y$ is equivalent to $\upsilon X$, and then  (i) follows.

Now, in order to see property  (ii), we identify $\upsilon X$ with the equivalent copy of it contained in  $s_d X$. Thus, we have that  both realcompactifications  $\upsilon X$ and $H(U_d(X))$ are contained in $s_d X$, and then the relation $\upsilon X\geq H(U_d (X))$ can be write as $\upsilon X\subset H(U_d (X))$.

And, finally (iii) is immediate from (i).
\end{proof}

\begin{remark}

In \cite{curzer} Curzer and Hager introduced the class $\mathcal{K} _1$ of those uniform spaces $(X,\mu)$ which are $G_\delta$-closed in their Samuel compactification $s_{\mu} X$.  Note that the same class of uniform realcompact spaces has been considered by Chekeev in \cite{chekeev}. Then from the above result (iii), we can say that   a metrizable space is a member of this class $\mathcal{K} _1$ if and only if it is realcompact. Therefore, it is clear that the uniform concept of realcompactness managed by Curzer, Hager and Chekeev is far from Samuel realcompactness. In fact, it may  lack Bourbaki-completeness when it is a  metric space.

\end{remark}

Going back to our initial problem, it is very clear that $\upsilon X$ and $H(U_d(X))$ will coincide  whenever every real-valued continuous function is uniformly continuous, i.e., when the space is said to be a {\it UC}-space  or also called {\it Atsuji} space. We refer to  the nice paper by Jain and Kundu  \cite{kundu}, where  many different characterizations of these spaces are given. As we can see in the next result,  it is precisely in the frame of the $UC$-spaces where  the corresponding equivalence between $\beta X$ and $s_d X$ occurs.

\begin{theorem} {\rm ({\sc Woods}  \cite{woods})} For a metric space $(X,d)$, the following statements are equivalent:
\begin{enumerate}
\item $X$ is a $UC$-space.

\item $C(X)=U_{d}X$.

\item $C^{*}(X)=U^{*}_{d}X$.

\item $\beta X\equiv s_{d}X$.

\end{enumerate}
\end{theorem}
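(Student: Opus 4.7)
The plan is to prove the cyclic implications $(1) \Rightarrow (2) \Rightarrow (3) \Rightarrow (4) \Rightarrow (1)$, with $(3) \Rightarrow (1)$ being the only substantive step.

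For $(1) \Rightarrow (2)$, note that ``$X$ is a UC-space'' means by definition $C(X) = U_d(X)$, and restricting both sides to bounded functions yields $(2) \Rightarrow (3)$. For $(3) \Rightarrow (4)$ I would invoke the preliminaries: both $\beta X = H(C^*(X))$ and $s_d X = H(U_d^*(X))$ are characterized up to equivalence as the smallest compactifications of $X$ to which every function in the corresponding unital vector lattice extends continuously, so the equality $C^*(X) = U_d^*(X)$ forces $\beta X \equiv s_d X$. Conversely, for $(4) \Rightarrow (3)$, a function on $X$ extends continuously to $\beta X$ iff it lies in $C^*(X)$, and extends to $s_d X$ iff it lies in $U_d^*(X)$, so the identification of these two compactifications identifies the two function families.

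It remains to prove $(3) \Rightarrow (1)$, which I would argue contrapositively. Suppose $X$ is not a UC-space; by the classical Atsuji characterization (see the survey \cite{kundu}) there exist non-empty disjoint closed subsets $A, B \subseteq X$ with $d(A,B) = 0$. Define the Urysohn-type function
$$g(x) = \frac{d(x,A)}{d(x,A) + d(x,B)}, \qquad x \in X.$$
Since $A \cap B = \emptyset$ the denominator never vanishes, so $g \in C^*(X)$ with $0 \leq g \leq 1$. However $g \equiv 0$ on $A$ and $g \equiv 1$ on $B$; choosing $a_n \in A$ and $b_n \in B$ with $d(a_n, b_n) \to 0$ yields $|g(a_n) - g(b_n)| = 1$ with arguments arbitrarily close, so $g \notin U_d^*(X)$. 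This contradicts (3), closing the cycle.

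The main obstacle is the appeal to the Atsuji characterization: the non-trivial direction (``every pair of disjoint non-empty closed sets is at positive distance'' $\Rightarrow$ $X$ is a UC-space) is not delivered by the Urysohn-type argument above and must be either cited or reproved via a separate analysis showing that a non-UC space must have either a non-compact derived set or isolated points failing to be uniformly separated from the rest. Everything else in the proof reduces to a direct verification from the preliminaries recalled in Section~1.
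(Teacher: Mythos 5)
The paper does not actually prove this theorem: it is quoted with attribution to Woods \cite{woods}, so there is no internal argument to compare yours against. Judged on its own merits, your cycle of implications is sound. The steps $(1)\Leftrightarrow(2)$ (the paper's definition of $UC$-space is precisely $C(X)=U_d(X)$), $(2)\Rightarrow(3)$, and $(3)\Rightarrow(4)$ reduce correctly to the preliminaries. In $(4)\Rightarrow(3)$ you implicitly use that a bounded continuous function extends to $s_dX$ \emph{only if} it is uniformly continuous, i.e.\ that $C(s_dX)|_X$ is exactly $U_d^*(X)$ and nothing larger; this is legitimate, since $U_d^*(X)$ is a uniformly closed unital vector lattice whose extensions separate the points of $s_dX=H(U_d^*(X))$, so Stone--Weierstrass gives $C(s_dX)|_X=\overline{U_d^*(X)}=U_d^*(X)$, but the half-sentence of justification should be said.

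For the one substantive implication $(3)\Rightarrow(1)$, your honesty about the gap is appropriate, but the gap is smaller than you make it out to be: you do not need the nontrivial half of Atsuji's characterization, only the implication ``$X$ not $UC$ $\Rightarrow$ there exist disjoint nonempty closed sets at distance zero,'' and this has a short self-contained proof. If $f\in C(X)\setminus U_d(X)$, choose $\varepsilon>0$ and sequences $(x_n)$, $(y_n)$ with $d(x_n,y_n)<1/n$ and $|f(x_n)-f(y_n)|\geq\varepsilon$. Neither sequence admits a convergent subsequence, since a common limit point would force $|f(x_{n_k})-f(y_{n_k})|\to 0$ by continuity of $f$; hence, after passing to a subsequence along which the points $x_{n_k},y_{n_k}$ are pairwise distinct, the sets $A=\{x_{n_k}:k\in\Nset\}$ and $B=\{y_{n_k}:k\in\Nset\}$ are disjoint, closed and discrete, with $d(A,B)=0$. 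Your Urysohn quotient $g=d(\cdot,A)/(d(\cdot,A)+d(\cdot,B))$ then produces the required member of $C^*(X)\setminus U_d^*(X)$. With that paragraph inserted the argument is complete and needs no appeal to \cite{kundu}; the ``separate analysis'' via derived sets that you sketch at the end is the hard half of Atsuji's theorem and is not needed for any implication in this cycle.
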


According to the last theorem, one can expect that also the equivalence between $\upsilon X$ and $H(U_d(X))$  will only  occur within the $UC$-spaces. But it is not true. For that it is enough to consider  $\Rset$ with the usual metric, since it is Samuel realcompact (and realcompact), i.e.,  $\Rset =\upsilon \Rset=H(U_d(\Rset))$, but it is not a $UC$-space. Moreover, next example makes clear that   even for discrete metric spaces there is not an analogous result to the above.

\begin{example} Let $X=\{1, 1+1/2, 2, 2+1/3, 3, 3+1/4, ...\}$ be endowed with the usual metric. Clearly  $X$ is a discrete metric space but not $UC$. Note that a discrete metric space is $UC$ if and only if it is uniformly discrete. Now, since $X$ is Heine-Borel then it is in fact Lipschitz realcompact, and so  $X=\upsilon X=H(U_d(X))$.
\end{example}

Then a natural  question here is whether for a metrizable space there exists some compatible (topological equivalent) metric making it a $UC$-space. The answer was given by Beer in \cite{beer}, where he proved that a metrizable space $X$ admits a compatible $UC$ metric if and only if the set $X^\prime$ of the accumulation points of $X$ is compact. That means in particular that every discrete space $X$ admits a  $UC$ metric $d$, and then $\upsilon X= H(U_d (X))$. However  the real line $\mathbb R$ with the usual topology  admits no $UC$ metric.

\medskip

Now we are going to state our main result in this section, asserting that the equivalence between $\upsilon X$ and $H(U_d(X))$ only occurs whenever $X$ is somehow well-placed in $H(U_d(X))$, namely when it is $G_\delta$-dense in its Samuel realcompactification. Recall that for every unital vector lattice $\mathcal L$ of continuous functions on $X$,  it is true that $H(\mathcal L)$ is $G_\delta$-closed in $H(\mathcal L^*)$ and then the $G_\delta$-closure of $X$ in  $H(\mathcal L^*)$ is contained  in $H(\mathcal L)$, but the space $X$ is not necessarily $G_\delta$-dense in $H(\mathcal L)$  (see \cite{garrido1}). In our case, that is for $\mathcal L=U_d(X)$, we can say  that  $H(U_d(X))$ is $G_\delta$-closed in $s_d X$ and therefore the  $G_\delta$-closure of $X$ in $s_d X$ is contained in $H(U_d(X))$. But in addition we have the following.

\begin{theorem} \label{upsilon = Samuel}  For a metric space $(X,d)$ the following statements are equivalent:
\begin{enumerate}

\item $\upsilon  X= H(U_d(X))$.

\item $X$ is $G_\delta$-dense in $H(U_d(X))$.

\item $H(U_d(X))$ is the $G_\delta$-closure of $X$ in $s_d X$.

\end{enumerate}
\end{theorem}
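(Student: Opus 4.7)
The plan is to prove the cyclic chain $(1)\Rightarrow (2)\Rightarrow (3)\Rightarrow (1)$, leveraging the three properties of Proposition \ref{upsilon subset Samuel}: $\upsilon X$ is equivalent to the $G_\delta$-closure of $X$ in $s_d X$; $\upsilon X \subset H(U_d(X))$; and $H(U_d(X))$ is $G_\delta$-closed in $s_d X$. A preliminary observation, which I would isolate first, is that $X$ is always $G_\delta$-dense in $\upsilon X$. Indeed, realizing $\upsilon X$ as the $G_\delta$-closure of $X$ in $\beta X$ (or equivalently in $s_d X$, by Proposition \ref{upsilon subset Samuel}(i)), every non-empty $G_\delta$-set of $\upsilon X$ has the form $\upsilon X\cap G$ for some $G_\delta$-set $G$ of the ambient compactification; picking a point $\xi\in\upsilon X\cap G$, the $G_\delta$-closure definition forces $G\cap X\neq\emptyset$.

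For $(1)\Rightarrow (2)$, the above preliminary observation gives the implication immediately: under the hypothesis $\upsilon X=H(U_d(X))$, the $G_\delta$-density of $X$ in $\upsilon X$ transfers verbatim to $H(U_d(X))$. For $(2)\Rightarrow (3)$, the inclusion ``$G_\delta$-closure of $X$ in $s_d X$'' $\subset H(U_d(X))$ is free from Proposition \ref{upsilon subset Samuel}, since $H(U_d(X))$ is $G_\delta$-closed in $s_d X$ and contains $X$. For the reverse inclusion, I would take $\xi\in H(U_d(X))$ and an arbitrary $G_\delta$-set $G$ of $s_d X$ containing $\xi$; then $G\cap H(U_d(X))$ is a non-empty $G_\delta$-set of the subspace $H(U_d(X))$, and the hypothesis gives $G\cap H(U_d(X))\cap X\neq\emptyset$, hence $G\cap X\neq\emptyset$, proving $\xi$ lies in the $G_\delta$-closure of $X$ in $s_d X$. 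Finally, $(3)\Rightarrow (1)$ is immediate by Proposition \ref{upsilon subset Samuel}(i), identifying $\upsilon X$ with that same $G_\delta$-closure.

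I do not expect serious obstacles here; the argument is essentially bookkeeping once Proposition \ref{upsilon subset Samuel} is in place. The only point requiring mild care is the step in $(2)\Rightarrow (3)$, where one must remember that a $G_\delta$-set in the subspace $H(U_d(X))$ is precisely the trace on $H(U_d(X))$ of a $G_\delta$-set in $s_d X$ (subspace topology), so that $G\cap H(U_d(X))$ is genuinely a $G_\delta$ of the subspace to which the $G_\delta$-density hypothesis applies. Apart from this, the proof is a clean round trip using the two ``meta'' facts that $\upsilon X$ is characterized by $G_\delta$-closure in any compactification sitting above $X$, and that $H(U_d(X))$ is itself $G_\delta$-closed in $s_d X$.
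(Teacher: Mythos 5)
Your proposal is correct and follows essentially the same route as the paper: the cyclic chain $(1)\Rightarrow(2)\Rightarrow(3)\Rightarrow(1)$ using the $G_\delta$-density of $X$ in $\upsilon X$, the $G_\delta$-closedness of $H(U_d(X))$ in $s_d X$, and Proposition \ref{upsilon subset Samuel}(i). You merely spell out the $(2)\Rightarrow(3)$ step in more detail than the paper does, which is harmless.
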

\begin{proof} (1) implies (2) follows at once since $X$ is always $G_\delta$-dense in $\upsilon X$. For (2) implies (3), it is enough to note that $H(U_d(X))$ is $G_\delta$-closed in $s_d X$, as we have mentioned in the above paragraph.  Finally that (3) implies (1) follows from condition (i) in Proposition \ref{upsilon subset Samuel}.
\end{proof}

Note that with an analogous proof to the above we
can also  derive an analogous result for $H(Lip_d(X))$.

\begin{theorem} \label{upsilon = Lipschitz realc.} For a metric space $(X,d)$ the following statements are equivalent:
\begin{enumerate}

\item $\upsilon  X= H(Lip_d(X))$.

\item $X$ is $G_\delta$-dense in $H(Lip_d(X))$.

\item $H(Lip_d(X))$ is the $G_\delta$-closure of $X$ in $s_d X$.

\end{enumerate}

\smallskip

\end{theorem}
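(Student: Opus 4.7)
My plan is to mimic exactly the structure of the proof of Theorem \ref{upsilon = Samuel}, replacing $U_d(X)$ by $Lip_d(X)$ throughout. The three key facts I would invoke are: (a) $X$ is always $G_\delta$-dense in its Hewitt--Nachbin realcompactification $\upsilon X$; (b) for any unital vector lattice $\mathcal L$ of continuous functions on $X$, the space $H(\mathcal L)$ is $G_\delta$-closed in $H(\mathcal L^*)$ (this is the general fact recalled just before Theorem \ref{upsilon = Samuel}); and (c) Proposition \ref{upsilon subset Samuel}(i), which identifies $\upsilon X$ with the $G_\delta$-closure of $X$ in $s_d X$.

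For the implication $(1)\Rightarrow (2)$, I simply note that $X$ is $G_\delta$-dense in $\upsilon X$; if $\upsilon X = H(Lip_d(X))$, then $X$ is $G_\delta$-dense in $H(Lip_d(X))$. For $(2)\Rightarrow (3)$, I apply fact (b) to the lattice $\mathcal L = Lip_d(X)$: since $H(Lip_d^*(X)) = s_d X$ (as recalled in Section~1), it follows that $H(Lip_d(X))$ is $G_\delta$-closed in $s_d X$. Because $X \subset H(Lip_d(X)) \subset s_d X$ and $X$ is $G_\delta$-dense in $H(Lip_d(X))$ by hypothesis, taking $G_\delta$-closures in $s_d X$ gives that $H(Lip_d(X))$ is precisely the $G_\delta$-closure of $X$ in $s_d X$. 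For $(3)\Rightarrow (1)$, I invoke Proposition \ref{upsilon subset Samuel}(i), which says the $G_\delta$-closure of $X$ in $s_d X$ is (equivalent to) $\upsilon X$; combined with (3), this yields $\upsilon X = H(Lip_d(X))$.

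There is really no serious obstacle here, since all the ingredients are already in place from the previous section: the key general fact that $H(\mathcal L)$ is always $G_\delta$-closed in $H(\mathcal L^*)$ applies to any unital vector sublattice, in particular to $Lip_d(X)\subset Lip_d^*(X)$, and Proposition \ref{upsilon subset Samuel}(i) is stated for the ambient compactification $s_d X$ without reference to which intermediate realcompactification one considers. The only mild care needed is to make sure the equivalences are stated inside $s_d X$ (rather than $\beta X$), which is fine because both $\upsilon X$ and $H(Lip_d(X))$ can be viewed as subspaces of $s_d X$ via the canonical identifications used throughout the paper.
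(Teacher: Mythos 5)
Your proposal is correct and follows exactly the route the paper intends: the paper proves the $U_d(X)$ version (Theorem \ref{upsilon = Samuel}) via the same three steps — $G_\delta$-density of $X$ in $\upsilon X$, the general fact that $H(\mathcal L)$ is $G_\delta$-closed in $H(\mathcal L^*)$ applied with $H(\mathcal L^*)=s_dX$, and Proposition \ref{upsilon subset Samuel}(i) — and then simply remarks that the Lipschitz case is "analogous," which is precisely the substitution $\mathcal L = Lip_d(X)$ you carry out.
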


\begin{remark} Note that the equality $\upsilon  X = H(U_d(X))$ implies the completeness of $X$. Indeed, any point in $\widetilde X\setminus X$ is in  $H(U_d(X))$ but not in the $G_\delta$-closure of $X$ in $s_d X$, since it is itself a $G_\delta$-set in $\widetilde X$ and then it must be contained in some $G_\delta$-set in $s_d X$ which does not meet $X$. On the other hand, the completeness of $X$ does not imply the equality $\upsilon  X=H(U_d(X))$, for that take for instance the Banach space $X=\ell_2$.

\end{remark}

Next example shows how can be different all the spaces associated to a metric space $(X,d)$ appearing in this paper, namely $X, \widetilde X, \upsilon X, H(U_d(X))$, $H(Lip_d(X))$, $s_d X$ and $\beta X$. Note that in order to have $X\neq \upsilon X$ we need to assume the  existence of measurable cardinals.

\begin{example} Let   $Y$ a set with measurable cardinal with the $0-1$ metric. Consider  the product metric space $X=Y\times (\ell_2\setminus\{0\})$. Clearly,  $X$ is a not realcompact neither complete, and therefore  we have $X \varsubsetneq  \upsilon X \varsubsetneq H(U_d(X))$ (see last Remark). Moreover, $H(U_d(X))\varsubsetneq H(Lip_d(X))\varsubsetneq  s_d X$, since $B=Y\times\{x\}$, for $0\neq x\in \ell_2$, is a bounded subset of  $X$ which is not  Bourbaki-bounded and also  $X$ is not bounded (see the table in Section 7). On the other  hand,    $X \varsubsetneq  \widetilde X \varsubsetneq H(U_d(X))$, since  $\widetilde X=Y\times \ell _2$ is not Bourbaki-complete (Proposition  \ref{Samuel versus completion}). Finally $\widetilde X\neq \upsilon X$ and $s_d X\neq \beta X$ since $\widetilde X$ is not realcompact and $X$ is not  $UC$. \end{example}

We conclude the paper by linking  the Hewitt-Nachbin realcompactification of a metrizable space with the family of all its  Samuel realcompactifications given by  compatible metrics. This result is in the line of that one given by Woods in the frame of compactifications. Namely, he proved in \cite{woods} that   if $(X,\tau)$ is a topological  metrizable space, then its Stone-\v{C}ech compactification is the supremum of the family of the Samuel compactifications defined by compatible metrics, that is, $$\beta X =\bigvee \big\{s_d X:  d \,\, metric \,\, with \,\,  \tau_d=\tau \}.$$

Then in this line we have the following.

\begin{theorem} \label{upsilon as supremum Samuel} Let $(X,\tau)$ be a topological metrizable space. Then,
$$\upsilon X =\bigvee \big\{H(U_d(X)): d\text{ metric with \,}  \tau_d=\tau\}.$$
\end{theorem}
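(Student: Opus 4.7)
The plan is to obtain the equality by proving the two inequalities separately, exploiting that $\upsilon X$ is the largest realcompactification of $X$ and that any $f\in C(X)$ can be forced into $U_d(X)$ for some compatible metric $d$.

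For the inequality $\bigvee \{H(U_d(X)):\tau_d=\tau\}\leq \upsilon X$, I would just note that $\upsilon X$ is characterized as the smallest realcompactification to which every $f\in C(X)$ continuously extends, and since $U_d(X)\subset C(X)$ for any compatible metric $d$, minimality of $H(U_d(X))$ yields $H(U_d(X))\leq \upsilon X$ for each such $d$. Consequently, the supremum of these realcompactifications is still $\leq \upsilon X$.

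For the reverse inequality, the key step is the observation that, given any $f\in C(X)$ and any compatible metric $d_0$ on $X$, the metric
$$d_f(x,y)=d_0(x,y)+|f(x)-f(y)|$$
is again compatible with $\tau$ (the identity $(X,d_0)\to (X,d_f)$ is a homeomorphism by continuity of $f$), and $f$ is $1$-Lipschitz and hence uniformly continuous with respect to $d_f$. Thus $f\in U_{d_f}(X)$, so $f$ admits a continuous extension to $H(U_{d_f}(X))$. Composing with the canonical projection from the supremum $Y:=\bigvee \{H(U_d(X)):\tau_d=\tau\}$ onto $H(U_{d_f}(X))$, we obtain a continuous real-valued extension of $f$ to $Y$. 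Since this works for every $f\in C(X)$, the realcompactification $Y$ satisfies the defining extension property of $\upsilon X$, and the minimality of $\upsilon X$ gives $\upsilon X\leq Y$, completing the proof.

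The only delicate point is the compatibility of the metric $d_f$ with $\tau$, together with the standard fact that the supremum $Y$ of a family of realcompactifications of $X$ admits projections onto each member of the family leaving $X$ pointwise fixed. Both are routine, so I do not anticipate a genuine obstacle; the argument essentially reduces to the classical fact that $C(X)=\bigcup\{U_d(X):\tau_d=\tau\}$ for a metrizable space.
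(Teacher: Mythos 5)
Your proposal is correct and follows essentially the same route as the paper: the easy inequality comes from $U_d(X)\subset C(X)$ and minimality, while the reverse inequality is obtained by forcing a given $f\in C(X)$ to be ($1$-Lipschitz, hence) uniformly continuous for the compatible metric $d_0(x,y)+|f(x)-f(y)|$ and then extending $f$ through the corresponding $H(U_d(X))$ to the supremum. This is exactly the argument in the paper.
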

\begin{proof} First note that $\upsilon X$ is greater than the above supremum, since for any compatible metric $d$ we have $\upsilon X \geq H(U_d(X))$. Finally, in order to see that the reverse inequality holds,   it is enough to see that every real continuous function on $X$ can be continuously extended to this supremum. Indeed, take $f\in C(X)$ and $d_0$ a metric on $X$ defining the topology $\tau$ (recall that $X$ is metrizable), then the metric $d(x,y)=d_0(x,y)+|f(x)-f(y)|$ is compatible, and $f\in U_d(X)$ (in fact, $f\in Lip_d(X)$). Finally since $f$ can be continuously extended to  $H(U_d(X))$, then it can be also extended to the supremum, as we wanted.
\end{proof}

Note that above proof works also to derive that $$\upsilon X =\bigvee \big\{H(Lip_d(X)): d \,\, metric \,\, with \,\,  \tau_d=\tau\}.$$
And this is another way to make clear the difference  between $\upsilon X$ and $H(U_d(X))$ for a given metric space $(X,d)$. Namely, if $\stackrel{{t}}{\sim}$ denotes "topologically equivalent", then  $$\upsilon X =\bigvee \big\{H(Lip_\rho(X)): \rho\stackrel{{t}}{\sim} d\big\} \text{ \,\,\,\,\,\and\,\,\,\,\,} H(U_d(X)) =\bigvee \big\{H(Lip_\rho(X)): \rho\stackrel{{u}}{\sim} d\big\}.$$
\medskip

\centerline{\sc Acknowledgements}

\vspace{.1cm}

We would like to thank  the referee for his/her interesting suggestions, and for providing us valuable references.

\end{document}